\documentclass[11pt, a4paper reqno]{amsart}

\usepackage[dvipsnames]{xcolor}
\definecolor{darkblue}{rgb}{0,0,0.7}
\definecolor{darkred}{rgb}{0.7,0,0}
\usepackage[colorlinks=true, linkcolor=darkred, citecolor=darkblue, urlcolor=blue, pagebackref=true, breaklinks=true]{hyperref}
\usepackage[capitalise]{cleveref}

\usepackage{amscd}
\usepackage{amssymb,amsmath}

\usepackage{mathrsfs}
\usepackage{graphicx}
\usepackage{enumitem}
\usepackage{mathtools}
\usepackage[utf8]{inputenc}
\usepackage{listings}
\usepackage{lipsum,eso-pic,xcolor}
\usepackage{lineno}
\usepackage{tikz}
\usepackage{caption}
\usetikzlibrary{arrows,decorations.pathmorphing,backgrounds,positioning,fit}
\usetikzlibrary{positioning}

\usetikzlibrary{trees}
\usetikzlibrary{arrows}

\usepackage{placeins}
\usepackage{relsize}
\setlength{\marginparwidth}{2cm}
\usepackage{todonotes}
\usepackage{soul}
\usepackage{enumitem}
\usepackage{setspace}
\usepackage{wrapfig}
\usepackage{etoolbox}
\BeforeBeginEnvironment{wrapfigure}{\setlength{\intextsep}{0pt}}
\onehalfspacing

\newtheorem{proposition}{Proposition}[section]
\newtheorem{lemma}[proposition]{Lemma}
\newtheorem{theorem}[proposition]{Theorem}

\newtheorem{question}[proposition]{Question}

\newtheorem{conjecture}[proposition]{Conjecture}
\theoremstyle{definition}
\newtheorem{remark}[proposition]{Remark}

\newtheorem{example}[proposition]{Example}
\newtheorem{definition}[proposition]{Definition}
\newtheorem{notation}[proposition]{Notation}


\newcommand{\reg}{{\rm reg}}

\def\H{\mathcal{H}}


\setlength{\parskip}{2pt}

\def\K{\mathbb{K}}
\def\G{\mathcal{G}}

\def\P{\mathcal{P}}
\def\E{\mathcal{E}}
\def\N{\mathcal{N}}

\def\reg{\mathrm{reg}}
\def\lcm{\mathrm{lcm}}
\def\aim{\mathrm{aim}}

\def\l{\langle}
\def\r{\rangle}
\def\x{\mathbf x}

\def\Bl{\mathcal{B}_G}

\def\supp{\mathrm{supp}}

\textwidth=16cm
\textheight=22cm
\topmargin=0.2cm
\oddsidemargin=0.05cm
\evensidemargin=0.05cm

\begin{document}

\title[Admissible matchings and regularity of square-free powers]{Admissible matchings and the Castelnuovo-Mumford regularity of square-free powers}

\author{Trung Chau}
\address{Chennai Mathematical Institute, India}
\email{chauchitrung1996@gmail.com}

\author{Kanoy Kumar Das}
\address{Chennai Mathematical Institute, India}
\email{kanoydas@cmi.ac.in; kanoydas0296@gmail.com}

\author{Amit Roy}
\address{Chennai Mathematical Institute, India}
\email{amitiisermohali493@gmail.com}

\author{Kamalesh Saha}
\address{Chennai Mathematical Institute, India}
\email{ksaha@cmi.ac.in; kamalesh.saha44@gmail.com}

\keywords{Square-free monomial ideal, hypergraph, $k$-admissible matching, Castelnuovo-Mumford regularity, square-free powers, block graphs, Cohen-Macaulay chordal graphs}
\subjclass[2020]{Primary: 05E40, 05C70, 13D02; Secondary: 13H10, 05C65}

\vspace*{-0.4cm}
\begin{abstract}
    Let $I$ be any square-free monomial ideal, and $\mathcal{H}_I$ denote the hypergraph associated with $I$. Refining the concept of $k$-admissible matching of a graph defined by Erey and Hibi, we introduce the notion of generalized $k$-admissible matching for any hypergraph. Using this, we give a sharp lower bound on the (Castelnuovo-Mumford) regularity of $I^{[k]}$, where $I^{[k]}$ denotes the $k^{\text{th}}$ square-free power of $I$. In the special case when $I$ is equigenerated in degree $d$, this lower bound can be described using a combinatorial invariant $\mathrm{aim}(\mathcal{H}_I,k)$, called the $k$-admissible matching number of $\mathcal{H}_I$. Specifically, we prove that $\mathrm{reg}(I^{[k]})\ge (d-1)\mathrm{aim}(\mathcal{H}_I,k)+k$, whenever $I^{[k]}$ is non-zero. Even for the edge ideal $I(G)$ of a graph $G$, it turns out that $\mathrm{aim}(G,k)+k$ is the first general lower bound for the regularity of $I(G)^{[k]}$. In fact, when $G$ is a forest, $\mathrm{aim}(G,k)$ coincides with the $k$-admissible matching number introduced by Erey and Hibi. Next, we show that if $G$ is a block graph, then $\mathrm{reg}(I(G)^{[k]})= \mathrm{aim}(G,k)+k$, and this result can be seen as a generalization of the corresponding regularity formula for forests. Additionally, for a Cohen-Macaulay chordal graph $G$, we prove that $\mathrm{reg}(I(G)^{[2]})= \mathrm{aim}(G,2)+2$. Finally, we propose a conjecture on the regularity of square-free powers of edge ideals of chordal graphs.
\end{abstract}

\maketitle

\section{Introduction} 
    Matching theory of finite simple graphs has a rich history and numerous applications in different areas of mathematics and beyond (see, e.g., \cite{BM76,Edmonds}). Let $\H$ be a simple hypergraph with the vertex set $V(\H)$ and the edge set $E(\H)$. 
    A \emph{matching} in  $\H$ is a collection of disjoint edges in $\H$. The \emph{matching number} of $\H$, denoted by $\nu(\H)$, is the maximum size of a matching in $\H$. Very recently, the notion of matching in graph theory has been connected to the area of combinatorial commutative algebra via the introduction of square-free powers (see \cite{Bigdeli2018(1), CFL1, DRS20242, CMSimplicialForests, EHHS, EHHM12, FiHeHi, ficarraCM2024, KaNaQu, SaS1, Fakhari2024}, etc.) and this topic has been of great interests since \cite{EHHS}. The authors in \cite{Bigdeli2018(1)} introduced the notion of square-free powers for the class of edge ideals of simple graphs. In fact, this notion can be defined for any square-free monomial ideals or edge ideals of hypergraphs. Let $I\subseteq R=\K[x_1,\ldots , x_n]$ be a square-free monomial ideal, and $\G(I)$ be the set of minimal monomial generators of $I$. Then there is a hypergraph $\H$ such that $I=I(\H)$. The \emph{$k$-th square-free power} of $I$, denoted by $I^{[k]}$, is defined by 
    \begin{align*}
        I^{[k]}&=\l f\in I^k \mid f \text{ is square-free} \r\\
        &=\l u_1\cdots u_k\mid u_1,\ldots , u_k\in E(\H) \text{ is a matching in } \H \r
    \end{align*}

    The study of various algebraic invariants such as Castelnuovo-Mumford regularity (or simply, regularity), depth, projective dimension, etc., of powers of edge ideals of hypergraphs have been one of the fundamental areas of research in the last few decades. In this context, the notion of matching and induced matching in a hypergraph play a very crucial role. Loosely speaking, a matching in a hypergraph $\H$ corresponds to a regular sequence that is contained in the edge ideal $I(\H)$. On the other hand, the induced matching number of $\H$, i.e., the maximal cardinality of an induced matching, denoted by $\nu_1(\H)$, is an invariant that can be used to estimate lower bounds for the regularity of powers of edge ideals \cite{BCDMS2022}.
    In particular, the following general lower bound for the regularity of edge ideals of $d$-uniform hypergraphs is well-known: 
    \[\reg(I(\H)^k)\geq (d-1)(\nu_1(\H)-1)+kd.
    \]
    A relevant question in this direction is the following: Is it possible to find a square-free version of this general lower bound? To the best of our knowledge, there is no general lower bound for the regularity of square-free powers. 
    In \cite{EHHS}, the authors explored this question for the sub-class of edge ideals of graphs. Recall that a graph $G$ is a $2$-uniform simple hypergraph. The following analogous lower bound for the square-free powers of edge ideals of graphs is provided in \cite[Theorem~2.1]{EHHS}: 
    \[
    \reg(I(G)^{[k]})\geq \nu_1(G)+k \;\; \text{ for all }\; 1\leq k\leq \nu_1(G).
    \]
    It is important to note that this lower bound only works for $k\leq \nu_1(G)$, while it may happen that $I(G)^{[k]}\neq \l 0\r$ for $k>\nu_1(G)$ (for instance, when $G$ is complete). In this context, Erey and Hibi \cite{ErHi1} introduced a new combinatorial notion, called \emph{$k$-admissible matching} of a graph $G$, which can be seen as a generalization of induced matching.
    The \emph{$k$-admissible matching number} of $G$, denoted by $\aim(G,k)$, is the maximal size of a $k$-admissible matching in $G$. Interestingly, in \cite[Theorem 25]{ErHi1} the authors provided the following upper bound when $G$ is a forest:
    \[
    \reg(I(G)^{[k]})\leq \aim(G,k)+k \;\; \text{ for all }\; 1\leq k\leq \nu(G).
    \]
    They further showed that this upper bound is attained when $k=2$ and conjectured that this bound will be attained for any $k\leq \nu(G)$ in the case of forests. Later on, Crupi, Ficarra, and Lax \cite[Theorem~3.6]{CFL1} settled this conjecture.

    In this work, we refine and extend the notion of $k$-admissible matching of a simple graph $G$ to the realm of hypergraphs (see Definition~\ref{def: aim}). Using this new definition, we provide a general lower bound for square-free powers of any square-free monomial ideal. This extends and strengthens all known lower bounds for square-free powers in the literature. 

    \medskip
    \noindent
    \textbf{Theorem~\ref{thm: ordinary lower bound}.}
    Let $\H$ be a hypergraph and $k$ an integer where $1\leq k\leq \nu(\mathcal{H})$. Then for any generalized $k$-admissible matching $M$ of $\H$, we have
    \[
    \reg \left( I(\mathcal{H})^{[k]} \right) \geq |V(M)|-|M|+k.
    \]
    The above-mentioned lower bound is sharp, as seen in \Cref{prop:example-lower-bound-sharp}. In the case of $d$-uniform hypergraphs, we define a combinatorial invariant, called the \textit{$k$-admissible matching number} of $\H$, denoted by $\aim(\H,k)$ (see \Cref{def:aim-d-uniform}), which is analogous to the induced matching number in the context of square-free powers. In particular, $\aim(\H,1)$ coincides with $\nu_1(\H)$. As a consequence of our main theorem, we obtain the following lower bound for square-free powers of edge ideals of $d$-uniform hypergraphs and, in particular, simple graphs.

    \medskip
    \noindent
    \textbf{Theorem~\ref{thm: ordinary lower bound d uniform}.}
        Let $\H$ be a $d$-uniform hypergraph and $k$ an integer where $1\leq k\leq \nu(\mathcal{H})$. Then 
    \[
    \reg \left( I(\mathcal{H})^{[k]} \right) \geq (d-1)\aim(\H,k) +k.
    \]
    Specifically, for a simple graph $G$ and for any $1\leq k\leq \nu(G)$, we have 
    \[
    \reg \left( I(G)^{[k]}\right) \geq \aim(G,k)+k.
    \]

    \par
    We provide two classes of simple graphs where we actually get the equality in the formula of the lower bound of the regularity of square-free powers. Indeed, we show that for the class of block graphs, we have the following formula for the regularity: 
    
    \medskip
    \noindent
    \textbf{Theorem~\ref{block ordinary upper bound}.}
    Let $G$ be a block graph. Then for all $1\leq k\leq \nu(G)$, 
    \[
    \reg(I(G)^{[k]})=\aim(G,k)+k.
    \]
    Since the class of block graphs contains the class of forests, the above formula extends the regularity formula of square-free powers of edge ideals of forests provided in \cite{CFL1} and \cite{ErHi1}. Next, we consider the class of Cohen-Macaulay chordal graphs, which is another important class of chordal graphs. For edge ideals of this class, we prove a similar formula for the second square-free ~power:

    \medskip
    \noindent
    \textbf{Theorem~\ref{CM chordal regularity}.}
    Let $G$ be a Cohen-Macaulay chordal graph. Then 
    \[
    \reg(I(G)^{[2]})=\aim(G,2)+2.
    \]

    \par 
    Square-free powers are defined as the square-free part of ordinary powers. There is an analog for symbolic powers, first studied by Fakhari \cite{FakhariSymbSq-free}, which we shall call \emph{symbolic square-free powers}. We remark that an analog of our general lower bound can be obtained for symbolic square-free powers of any square-free monomial ideal \cite{CDRS-symbolic}.
    
    This article is organized as follows. In \Cref{sec: 2}, we recall the necessary definitions and results from combinatorics and commutative algebra. In \Cref{sec: 3}, we prove the general lower bound for square-free powers. In \Cref{sec: 4} and \ref{sec: 5}, we show that the general lower bound is attained for all the square-free powers of edge ideals of block graphs and the second square-free power of the edge ideal of Cohen-Macaulay chordal graphs, respectively. Finally, \Cref{section 6} explores an implication of \Cref{thm: ordinary lower bound} in the setting of recently introduced matching powers \cite{ErFi1} of (not necessarily square-free) monomial ideals. This section also presents a few open problems related to the regularity of square-free powers of edge ideals of various classes of graphs. In particular, building on our results and some computational evidence, we propose the following conjecture:

    \medskip
    \noindent
    \textbf{Conjecture~\ref{conj: chordal reg}.}
        Let $G$ be a chordal graph. Then, for all $1\leq k\leq \nu(G)$, we have
        \[ \reg(I(G)^{[k]})=\aim(G,k)+k. \]

\section{Preliminaries}\label{sec: 2}

In this section, we quickly recall some preliminary notions related to combinatorics and commutative algebra. For any undefined term, the reader is advised to look at \cite{BM76,Edmonds,RHV}.

\subsection{Combinatorics:}
    A (simple) hypergraph $\H$ is a pair $\H=(V(\H),E(\H))$, where $V(\H)$ is a finite set, called the \textit{vertex set} of $\H$, and $E(\H)$ is a collection of subsets of $V(\H)$ such that no two elements contain each other, called the \textit{edge set} of $\H$. The elements of $V(\H)$ are called the vertices of $\H$, and the elements of $E(\H)$ are called the edges of $\H$. An {\it induced sub-hypergraph} of $\H$ on the vertex set $W\subseteq V(\H)$, denoted by $\H[W]$, is a hypergraph with $V(\H[W])=W$ and $E(\H[W])=\{\E\in E(\H)\mid \E \subseteq W\}$. For $W\subseteq V(\H)$, the hypergraph $\H[V(\H)\setminus W]$ is also denoted by $\H\setminus W$.  
If $W=\{x\}$ for some $x\in V(\H)$, then $\H\setminus \{x\}$ is simply denoted by $\H\setminus x$. A \textit{matching} of $\H$ is a collection of edges $M$ of $\H$ such that $\E\cap \E' =\emptyset$ for any two distinct edges $\E,\E'\in M$.  A matching $M$ of $\H$ is called an {\it induced matching} of $\H$ if $E(\H[\cup_{\E\in M}\E])=M$. The {\it induced matching number} of $\H$, denoted by $\nu_1(\H)$, is the maximum cardinality of an induced matching in $\H$. \par

A (simple) \textit{graph} $G$ is nothing but a hypergraph whose edges are of cardinality two. For $A\subseteq V(G)$, the set of {\it closed neighbors} of $A$ is defined as $N_{G}[A]\coloneqq A\cup \{x\in V(G)\mid \{x,y\}\in E(G) \text{ for some } y\in A\}$. By a set of \textit{open neighbors} of $A$, we mean $N_{G}(A)\coloneqq N_{G}[A]\setminus A$. If $A=\{x\}$, then $N_G(\{x\})$ is simply denoted by $N_G(x)$. For $x\in V(G)$, the number $|N_G(x)|$ is called the {\it degree} of $x$, and is denoted by $\deg(x)$. Given two vertices $x,y\in V(G)$, an {\it induced path of length $n$} between $x$ and $y$ is a collection of vertices $P:a_1,\ldots,a_n$ such that $a_1=x,a_n=y$, and $\{a_i,a_j\}\in E(G)$ if and only if $j=i-1$ or $j=i+1$. A \textit{complete graph} on $n$ vertices, denoted by $K_n$, is a graph such that there is an edge between every pair of vertices. A \textit{clique} of a graph $G$ is a complete induced subgraph of $G$. A vertex $x\in V(G)$ is said to be a \textit{free vertex} of $G$ if $G[N_{G}(x)]$ is a complete graph. We say two edges $e$ and $e'$ of $G$ form a \textit{gap} in $G$ if $\{e,e'\}$ forms an induced matching in $G$. The complement of a graph $G$, denoted by $G^c$, is a graph such that $V(G^c)=V(G)$ and $E(G^c)=\{\{x,y\}\mid \{x,y\}\not\in E(G)\}$.

\par
A {\it cycle} of length $m$, denoted by $C_m$, is a graph such that $V(C_m)=\{x_1,\ldots,x_m\}$ and $E(C_m)=\{\{x_1,x_m\},\{x_i,x_{i+1}\}\mid 1\le i\le m-1\}$. A graph $G$ is called a \textit{forest} if it does not contain any induced cycle. A connected forest is known as a \textit{tree}.  A graph is called \textit{chordal} if it does not contain an induced $C_n$ for $n\geq 4$. A graph is said to be \textit{weakly chordal} if the graph and its complement both do not have any induced cycle $C_n$ for $n\geq 5$. Note that any chordal graph is also weakly chordal.

\subsection{Commutative Algebra} For a graded ideal $I\subseteq R$, the graded minimal free resolution of $I$ is an exact sequence
\[
\mathcal F_{\cdot}: \,\, 0\rightarrow F_p\xrightarrow{\partial_{p}}\cdots\xrightarrow{\partial_{2}} F_1\xrightarrow{\partial_1} F_0\xrightarrow{\partial_0} I\rightarrow 0, 
\]
where $F_i=\oplus_{j\in\mathbb N}R(-j)^{\beta_{i,j}(I)}$ is a free $R$-module for each $i\ge 0$ and $R(-j)$ denotes the polynomial ring $R$ with grading shifted by $j$. The numbers $\beta_{i,j}(I)$ are called the $i^{th}$ $\mathbb N$-graded {\it Betti numbers} of $I$ in degree $j$. The {\it Castelnuovo-Mumford regularity} (in short, regularity) of $I$, denoted by $\reg(I)$, is defined as $\reg(I)\coloneqq \max\{j-i\mid \beta_{i,j}(I)\neq 0\}$. If $I$ is the zero ideal, then we adopt the convention that $\reg(I)=0$. 
The following two well-known results regarding the regularity of a graded ideal will be used frequently in the subsequent sections.

\begin{lemma}[{\cite[Lemma 2.10]{DHS}}]\label{regularity lemma}
    Let $I \subseteq R$ be a monomial ideal, $m$ be a monomial of degree $d$ in $R$, and $x$ be an indeterminate in $R$. Then 
\begin{enumerate}[label=(\roman*)]
    \item $\reg(I+\l x\r)\le \reg(I) $,

    \item $\reg(I:x)\le \reg(I)$,

    \item $\reg I \le \max\{\reg(I : m) + d, \reg( I+ \l m\r)\}$.
\end{enumerate}
    \end{lemma}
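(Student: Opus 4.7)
All three parts flow from the standard short exact sequence of graded $R$-modules
\[
0\longrightarrow\frac{R}{I:m}(-d)\xrightarrow{\ \cdot m\ }\frac{R}{I}\longrightarrow\frac{R}{I+\langle m\rangle}\longrightarrow 0
\]
combined with the basic regularity estimates for a short exact sequence $0\to A\to B\to C\to 0$, namely $\reg(B)\le\max\{\reg(A),\reg(C)\}$, $\reg(A)\le\max\{\reg(B),\reg(C)+1\}$, and $\reg(C)\le\max\{\reg(A)-1,\reg(B)\}$. I will move between $\reg(J)$ and $\reg(R/J)=\reg(J)-1$ as convenient and use $\reg(N(-d))=\reg(N)+d$ for graded shifts. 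The plan is to prove (iii) directly from this sequence, deduce (i) from (ii), and leave (ii) for last as the genuinely subtle input.

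For (iii), the middle-term bound applied to the displayed sequence gives
$\reg(R/I)\le\max\{\reg(R/(I:m))+d,\reg(R/(I+\langle m\rangle))\}$,
which translates at once into the stated inequality for ideals. For (i), specialize $m=x$, $d=1$, and apply the quotient-term bound instead: since $\reg(R/(I:x)(-1))=\reg(R/(I:x))+1$, one obtains $\reg(R/(I+\langle x\rangle))\le\max\{\reg(R/(I:x)),\reg(R/I)\}$, that is $\reg(I+\langle x\rangle)\le\max\{\reg(I:x),\reg(I)\}$. Hence (ii) implies (i) immediately.

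For part (ii), my plan is to first reduce to the squarefree case via polarization, which preserves regularity and can be arranged to be compatible with colon-by-a-variable (up to a quotient by a regular linear sequence that does not affect regularity). Once $I=I_\Delta$ is squarefree, $I:x$ is again the Stanley--Reisner ideal of a simplicial complex $\Gamma$ on $V(\Delta)$ obtained from $\Delta$ by a deletion/link-type modification at the vertex corresponding to $x$. I would then invoke Hochster's formula
\[
\beta_{i,i+j}(I_\Sigma)=\sum_{W\subseteq V(\Sigma),\,|W|=i+j}\dim_\K\widetilde{H}_{j-2}(\Sigma|_W;\K)
\]
to reduce the desired bound $\reg(I_\Gamma)\le\reg(I_\Delta)$ to a top-reduced-homology comparison of induced subcomplexes.

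The main obstacle is precisely this last simplicial comparison: one must show that whenever $\widetilde{H}_k(\Gamma|_W;\K)\neq 0$, there exist $W'\subseteq V(\Delta)$ and $k'\ge k$ with $\widetilde{H}_{k'}(\Delta|_{W'};\K)\neq 0$, so that the maximum over $\Delta$ dominates the one over $\Gamma$. An alternative, purely algebraic route avoids polarization entirely and instead inducts on the number of variables actually appearing in the generators of $I$, repeatedly applying the short exact sequence above while tracking how generators partition according to divisibility by $x$; this is combinatorially more delicate but self-contained, and is likely the approach taken in the cited reference \cite{DHS}.
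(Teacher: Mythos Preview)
The paper does not prove this lemma at all; it is quoted verbatim from \cite{DHS} and used as a black box throughout. So there is no in-paper argument to compare against, and your proposal must be judged on its own.

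Your treatment of (iii) via the short exact sequence is correct and standard, and your reduction of (i) to (ii) through the quotient-term bound is also fine. The real content, as you acknowledge, is (ii), and here the proposal is an outline rather than a proof: you reduce to a comparison of reduced homology of induced subcomplexes via Hochster's formula, then stop at precisely the point that needs work. For the record, the missing step can be completed as follows. In the squarefree case one checks that $I_\Delta:x_i=I_\Gamma$ with $\Gamma=\{F:F\cup\{i\}\in\Delta\}$; the vertex $i$ is a cone point of $\Gamma$, so only subsets $W$ with $i\notin W$ contribute in Hochster's formula, and for such $W$ one has $\Gamma|_W=\mathrm{lk}_{\Delta|_{W\cup\{i\}}}(i)$. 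The Mayer--Vietoris sequence for $\Delta|_{W\cup\{i\}}=\mathrm{star}(i)\cup\Delta|_W$ (with contractible star and intersection the link) then shows that $\widetilde H_k(\Gamma|_W)\neq 0$ forces either $\widetilde H_k(\Delta|_W)\neq 0$ or $\widetilde H_{k+1}(\Delta|_{W\cup\{i\}})\neq 0$, which is exactly the comparison you need. The polarization step also deserves a word: one should verify that $(I:x)^\P$ and $I^\P:x$ agree (up to the usual regular sequence of differences of polarizing variables), which is true but not automatic. Without these two verifications, part (ii) remains a plausible plan rather than a proof.
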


\begin{lemma}[{\cite{RHV}}]\label{reg sum}
        Let $I_1\subseteq R_1=\mathbb K[x_1,\ldots,x_m]$ and $I_2\subseteq R_2=\mathbb K[x_{m+1},\ldots, x_n]$ be two graded ideals. Consider the ideal $I=I_1R+I_2R\subseteq R=\mathbb K[x_1,\ldots,x_n]$. Then 
        \[ \reg(I)=\reg(I_1)+\reg(I_2)-1.
        \]
    \end{lemma}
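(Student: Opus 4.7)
The plan is to exploit the disjointness of the variable sets and pass to a short exact sequence. Since $I_1$ and $I_2$ involve disjoint variables, one has the equality $I_1R \cap I_2R = (I_1R)\cdot(I_2R)$ in $R$, which produces the short exact sequence of graded $R$-modules
\[
0 \longrightarrow I_1R \cdot I_2R \longrightarrow I_1R \oplus I_2R \longrightarrow I \longrightarrow 0.
\]
The strategy is to compute the regularity of the two outer terms explicitly and then squeeze $\reg(I)$ out of the standard short-exact-sequence estimates from \Cref{regularity lemma}-style considerations.

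First I would observe that $\reg(I_1R \oplus I_2R) = \max\{\reg(I_1), \reg(I_2)\}$, since the regularity of a direct sum is the maximum of the summands' regularities, and extension of scalars $R_j \hookrightarrow R$ by fresh variables is flat and preserves graded Betti numbers. Second, I would identify $I_1R \cdot I_2R$ with $I_1 \otimes_{\K} I_2$ as graded $R$-modules using the tensor factorization $R = R_1 \otimes_{\K} R_2$. A Künneth-type argument then shows that tensoring the minimal graded free resolutions of $I_1$ over $R_1$ and of $I_2$ over $R_2$ yields a minimal graded free resolution of $I_1R \cdot I_2R$ over $R$, so the Betti numbers convolve:
\[
\beta^R_{i,j}(I_1R \cdot I_2R) \;=\; \sum_{\substack{a+b=i\\ c+d=j}} \beta^{R_1}_{a,c}(I_1)\cdot \beta^{R_2}_{b,d}(I_2),
\]
from which $\reg(I_1R \cdot I_2R) = \reg(I_1) + \reg(I_2)$ follows immediately.

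Third, I would apply the standard regularity inequalities for the short exact sequence. The upper bound $\reg(I) \leq \max\{\reg(I_1R\cdot I_2R) - 1,\; \reg(I_1R \oplus I_2R)\}$ collapses to $\reg(I_1) + \reg(I_2) - 1$, because $\reg(I_j) \geq 1$ for any nonzero graded ideal forces $\reg(I_1) + \reg(I_2) - 1 \geq \max\{\reg(I_1), \reg(I_2)\}$. For the matching lower bound, the inequality $\reg(I_1R \cdot I_2R) \leq \max\{\reg(I_1R \oplus I_2R),\; \reg(I) + 1\}$ rules out the first alternative (again via $\reg(I_1) + \reg(I_2) > \max\{\reg(I_1), \reg(I_2)\}$), forcing $\reg(I) + 1 \geq \reg(I_1) + \reg(I_2)$. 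The step I expect to require the most care is the Künneth identification: one must verify that the tensor product of two minimal resolutions stays minimal over $R$, and this is precisely where the disjointness of variables is indispensable, as it guarantees that no unit entries can arise in the tensored differentials.
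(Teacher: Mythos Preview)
The paper does not prove this lemma at all; it is stated with a citation to \cite{RHV} and no argument is given. Your short-exact-sequence approach via $I_1R\cap I_2R=I_1R\cdot I_2R$, together with the K\"unneth computation of $\reg(I_1R\cdot I_2R)=\reg(I_1)+\reg(I_2)$ and the standard regularity bounds for $0\to A\to B\to C\to 0$, is a correct and standard way to establish the result (for nonzero $I_1,I_2$, so that $\reg(I_j)\geq 1$). Since there is no proof in the paper to compare against, there is nothing further to say beyond noting that your argument is sound.
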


Let $\H$ be a hypergraph on the vertex set $V(\H)=\{x_1,\ldots,x_n\}$. Identifying the vertices of $\H$ as indeterminates, we consider the polynomial ring $R=K[x_1,\ldots,x_n]$ over a field $K$. Corresponding to any $F\subseteq V(\H)$, we denote the square-free monomial $\prod_{x_i\in F}x_i$ of $R$ by $\x_{F}$. The edge ideal of $\H$, denoted by $I(\H)$, is a square-free monomial ideal of $R$ defined as 
\[ I(\H)\coloneqq \l \x_{\E}\mid \E\in E(\H)\r. \]
Note that any square-free monomial ideal can be seen as the edge ideal of a hypergraph. 
\par 

\newpage
\section{$k$-admissible matching and a general lower bound}\label{sec: 3}

The induced matching number of a hypergraph $\H$ is a combinatorial invariant that has been used to estimate lower bounds for the regularity of ordinary and symbolic powers of the edge ideal $I(\H)$ \cite{BCDMS2022}. In the case of square-free powers, a similar attempt has been made in \cite[Theorem 2.1]{EHHS}, where the authors have shown that $\reg(I(G)^{[k]})\geq \nu_1(G)+k$ for all $1\leq k\leq \nu_1(G)$. Later on, as a generalization of induced matching, $k$-admissible matching was introduced by Erey and Hibi. Then it was proved that for the class of forests, the equality $\reg(I(G)^{[k]})=\aim(G,k)+k$ holds for all $1\leq k\leq \nu(G)$ (see \cite[Theorem 3.6]{CFL1} and \cite[Theorem 25]{ErHi1}).  Furthermore, it follows from the work of Fakhari \cite[Theorem 4.3]{Fakhari2024}, that for a Cameron-Walker graph $G$, $\reg(I(G)^{[k]})=\aim(G,k)+k$ for all $1\leq k\leq \nu(G)$. Based on these results, one can expect that for an arbitrary graph $G$, the regularity of $I(G)^{[k]}$ is bounded below by the combinatorial invariant $\aim(G,k)+k$. 

Inspired by this idea, in this section, we refine and extend their definition of $k$-admissible matching for simple graphs to all hypergraphs. To avoid any ambiguity, we call this refined notion of matching to be generalized $k$-admissible matching of a hypergraph and use the same notations. We use $\sqcup$ to denote disjoint unions. For a matching $M$, let $V(M)$ denote the set of vertices that belong to the hyperedges in $M$.

\begin{definition}\label{def: aim}
    Let $\H$ be a hypergraph and $M$ a matching of $\H$. Then for any $1\leq k\leq \nu(\H)$, we say that $M$ is a \textit{generalized $k$-admissible matching} if there is a partition $M=M_1\sqcup \cdots \sqcup M_r$ such that 
    \begin{enumerate}
        \item for any edge $\E\in \H[V(M)]$, we have $\E\in E(\H[V(M_i)])$ for some $i\in [r]$;
        \item $k\leq |M|=\sum_{i=1}^r |M_i|\leq r+k-1$;
        \item for any $i\in [r]$ and any matching $M_i'$ in $\H[V(M_i)]$ with $|M_i'|=|M_i|$, one has $V(M_i')=V(M_i)$.
    \end{enumerate}     
\end{definition}

\noindent\textbf{Existence of generalized $k$-admissible matching:} 
Let $\H$ be a hypergraph and $1\leq k\leq \nu(\H)$. Take a matching $M'$ of $\H$ with $\vert M'\vert =k$. Now, consider the induced sub-hypergraph $\H'=\H[V(M')]$ and pick a minimal generator $u$ of the ideal $I(\H')^{[k]}$. Then the induced sub-hypergraph $\H[\supp(u)]$ has a matching number $k$, and any matching of size $k$ in $\H[\supp(u)]$ consists of all the vertices in $\supp(u)$. Thus, any matching $M$ with $|M|=k$ in $\H[\supp(u)]$ will give a generalized $k$-admissible matching of $\H$ by taking the partition $M=M_1$.

\medskip

Our aim in this section is to give a lower bound for $\reg I(\H)^{[k]}$ for any $1\le k\le\nu(\H)$ using generalized $k$-admissible matching. We present some basic observations for this new concept.

\begin{remark}\label{aim property all hypergraphs} Let $\H$ be a hypergraph and $M\subseteq E(\H)$ is a matching of $\H$. 
     \begin{enumerate}
     \item A matching $M$ of $\H$ is a generalized $1$-admissible matching if and only if $M$ is an induced matching of $\H$.
    \item  The condition $(3)$ in \Cref{def: aim} implies that $\nu(\H[V(M_i)])=|M_i|$ for any $i\in[r]$. Indeed, suppose otherwise that there exists a matching $M'$ in $\H[V(M_i)]$ such that $|M'|>|M_i|$. Then consider a proper subset $M''$ of $M'$ such that $|M''|=|M_i|$. This contradicts \cref{def: aim} (3), as desired.
    \item The condition $(3)$ in \Cref{def: aim} is equivalent to the following fact: for any $i\in [r]$, $I(\H[V(M_i)])^{[|M_i|]}$ is generated by one monomial $\prod_{x\in V(M_i)} x$. This is immediate from the previous remark and \cref{def: aim} (3).
\end{enumerate}
\end{remark}

The following result follows from verifying the conditions in \cref{def: aim}; thus, we state it without proof.

\begin{proposition}\label{admissible result2}
    Let $\mathcal{H}$ be a hypergraph and $\mathcal{H}_1$ an induced sub-hypergraph of $\mathcal{H}$. 
    Then, any generalized $k$-admissible matching in $\H_1$ is a generalized $k$-admissible matching of $\H$.
\end{proposition}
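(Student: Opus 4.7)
The plan is to verify the three conditions of Definition~\ref{def: aim} directly, using the one structural fact about induced sub-hypergraphs that makes everything work: if $\H_1$ is an induced sub-hypergraph of $\H$ and $W \subseteq V(\H_1)$, then $\H[W] = \H_1[W]$. This holds because $\H_1[W]$ already has edge set $\{\E \in E(\H_1) : \E \subseteq W\} = \{\E \in E(\H) : \E \subseteq W\} = E(\H[W])$, since $\H_1$ is induced.

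Let $M = M_1 \sqcup \cdots \sqcup M_r$ be a generalized $k$-admissible matching of $\H_1$, witnessed by this partition. First I would note that $V(M) \subseteq V(\H_1)$ and each $V(M_i) \subseteq V(\H_1)$, so by the observation above, $\H[V(M)] = \H_1[V(M)]$ and $\H[V(M_i)] = \H_1[V(M_i)]$ for every $i \in [r]$. For condition~(1), given any $\E \in E(\H[V(M)]) = E(\H_1[V(M)])$, the admissibility of $M$ in $\H_1$ yields an index $i$ with $\E \in E(\H_1[V(M_i)]) = E(\H[V(M_i)])$.

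Condition~(2) is purely numerical: the inequalities $k \le |M| = \sum_{i=1}^r |M_i| \le r + k - 1$ depend only on the partition of $M$ and not on the ambient hypergraph, so they are inherited verbatim. For condition~(3), let $M_i'$ be any matching in $\H[V(M_i)]$ with $|M_i'| = |M_i|$. Since $\H[V(M_i)] = \H_1[V(M_i)]$, $M_i'$ is also a matching in $\H_1[V(M_i)]$ of the same size, so the admissibility of $M$ in $\H_1$ gives $V(M_i') = V(M_i)$.

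There is really no obstacle here; the content of the proposition is the trivial compatibility $\H[W] = \H_1[W]$ when $\H_1$ is induced. The only thing to be careful about is that the argument would fail if $\H_1$ were merely a sub-hypergraph (not induced), because then some edge $\E$ of $\H[V(M)]$ might lie outside $E(\H_1)$ and therefore not be covered by the partition witnessing admissibility in $\H_1$. With the induced hypothesis, the three conditions transfer with no additional work, and this is exactly why the authors state the result without proof.
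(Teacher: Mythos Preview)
Your proof is correct and follows exactly the approach the paper indicates: the authors state the result without proof, noting only that it ``follows from verifying the conditions in \cref{def: aim}'', and you have carried out precisely that verification via the key compatibility $\H[W]=\H_1[W]$ for $W\subseteq V(\H_1)$.
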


Our primary tool to establish the regularity lower bound is the idea of Betti splitting introduced in \cite{FranciscoHaVanTuyl2009}.

\begin{definition}[{\cite[Definition 1.1]{FranciscoHaVanTuyl2009}}]\label{def:betti-splittings}
    Let $I,J, \text{ and } K$ be monomial ideals such that $\G(I)$ is the disjoint union of $\G(J)\text{ and } \G(K)$. Then $I=J+K$ is a \textit{Betti splitting} if 
    \begin{align}\label{splitting formula}
    \beta_{i,j}(I)=\beta_{i,j}(J)+\beta_{i,j}(K)+\beta_{i-1,j}(J\cap K) \;\;\;\text{ for all } i,j.
    \end{align}
\end{definition}

Although the notion of Betti splitting was formally introduced in \cite{FranciscoHaVanTuyl2009}, it first appeared in the work of Eliahou and Kervaire \cite{EliKer}. In fact, in this paper, we use the concept of splittable monomial ideal as defined in \cite{EliKer}.

\begin{definition}[{cf. \cite{EliKer}}]\label{def:splittings}
        A monomial ideal $I$ is said to be \emph{splittable} if $I=J+K$, where $J$ and $K$ are two non-zero ideals and $\G (I)=\G(J)\sqcup \G(K)$, such that there exists a splitting function 
    \begin{align*}
    (\phi,\psi): \G(J\cap K)&\rightarrow \G(J)\times \G(K)\\
    w &\mapsto (\phi(w),\psi(w))
    \end{align*}
    which satisfies the following two properties:
    \begin{enumerate}[label=(\alph*)]
        \item for each $w\in \G(J\cap K)$, $w=\mathrm{lcm} (\phi(w),\psi(w))$;
        \item for each $S\subseteq \G(J\cap K)$, both $\mathrm{lcm}(\phi(S))$ and $\mathrm{lcm}(\psi(S))$ strictly divide $\mathrm{lcm}(S)$.
    \end{enumerate}
\end{definition}
\begin{remark}\label{EL splitting remark}
  The fact that for a splittable monomial ideal $I=J+K$ in \Cref{def:splittings}, \Cref{splitting formula} holds, follows from \cite[Proposition 3.2]{Fatabbi2001}.  
\end{remark}

It is evident from \Cref{splitting formula} that Betti splittings allow us to study the graded Betti numbers of a splittable monomial ideal $I$ in terms of the graded Betti numbers of the smaller ideals $J$ and $K$. We provide an important application of this below.

\begin{lemma}\label{lem:Betti-splittings-sequences}
    Let $\{I_i\}_{i\in \mathbb{N}}$ be a sequence of proper square-free monomial ideals in $\K[x_1,\dots, x_{N_1}]$ such that for any $i\in \mathbb{N}$ and any $f\in I_i$, we have $f/x\in I_{i-1}$ for any $x\in \supp(f)$. Let $\{L_i\}_{i\in \mathbb{N}}$ be a sequence of square-free monomial ideals in $\K[y_1,\dots, y_{N_2}]$ with the same condition. Let $n,k,l$ be three integers where $l<k\leq n$. Set
    \begin{align*}
        J&\coloneqq I_k L_{n-k},\\
        K&\coloneqq I_{k-1} L_{n-k+1} + \cdots + I_lL_{n-l}.
    \end{align*}
    Assume that $J\neq 0$. Then $J+K$ is a Betti splitting. In particular, we have
    \[
    \reg\left( J+K \right) \geq  \max \{ \reg \left( I_k \right) + \reg\left(  L_{n-k} \right),\
        \reg \left( I_k\right) + \reg \left( L_{n-k+1} \right) -1   \}.
    \]
\end{lemma}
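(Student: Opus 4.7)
The plan is to identify the intersection $J \cap K$ in closed form, realize $J + K$ as a splittable ideal in the sense of \Cref{def:splittings}, and then extract the regularity bound from the Betti splitting formula combined with the regularity additivity for products of ideals in disjoint sets of variables.

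First I would compute $J \cap K$. The closure property forces $I_k \subseteq I_{k-1} \subseteq \cdots \subseteq I_l$ and $L_{n-l} \subseteq \cdots \subseteq L_{n-k+1} \subseteq L_{n-k}$. Since the two sequences live in disjoint variable sets, every monomial of $\K[x,y]$ factors uniquely as $uv$ with $u\in\K[x]$, $v\in\K[y]$, and lies in a product $AB$ (with $A \subseteq \K[x]$, $B \subseteq \K[y]$) iff $u \in A$ and $v \in B$. Hence $uv \in J \cap K$ iff $u \in I_k$ and $v \in L_{n-j}$ for some $l \leq j \leq k-1$; by the nesting this is $v \in L_{n-k+1} + \cdots + L_{n-l} = L_{n-k+1}$, giving $J \cap K = I_k L_{n-k+1}$.

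Next, to realize $J+K$ as splittable, I would note that the closure property also forces $\G(I_i) \cap \G(I_{i-1}) = \emptyset$ and $\G(L_i) \cap \G(L_{i-1}) = \emptyset$ (a minimal generator $f$ of $I_i$ cannot lie in $\G(I_{i-1})$ because for any $x \in \supp(f)$, $f/x \in I_{i-1}$ would be a proper divisor in $I_{i-1}$), and combined with the disjoint variable structure this gives $\G(J) \cap \G(K) = \emptyset$. For the splitting function, given $w = uv \in \G(I_k L_{n-k+1})$ with $u \in \G(I_k)$, $v \in \G(L_{n-k+1})$, I would pick $v^\downarrow \in \G(L_{n-k})$ dividing $v$ and $u^\downarrow \in \G(I_{k-1})$ dividing $u$ (both exist by closure), set $\phi(w) = u v^\downarrow \in \G(J)$, and let $\psi(w)$ be any minimal generator of $K$ dividing $u^\downarrow v$. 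Condition (a) of \Cref{def:splittings} then follows because the $\K[y]$-part of $\psi(w)$ must still equal $v$: any proper $\K[y]$-divisor of $v$ lying in some $L_{n-j}$ for $j \leq k-1$ would sit in $L_{n-k+1}$ by the nesting and contradict minimality of $v$ there. Condition (b) follows since for any $S \subseteq \G(J \cap K)$, the $\K[y]$-part of $\lcm(\phi(S))$ strictly divides the $\K[y]$-part of $\lcm(S)$ using $\G(L_{n-k+1}) \cap \G(L_{n-k}) = \emptyset$, and symmetrically in $\K[x]$ for $\lcm(\psi(S))$.

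With $J + K$ splittable, \Cref{EL splitting remark} yields the Betti splitting formula and hence $\reg(J+K) \geq \max\{\reg(J), \reg(J \cap K) - 1\}$. The identity $AB = A \cap B$ for ideals in disjoint variables, together with the short exact sequence $0 \to A \cap B \to A \oplus B \to A + B \to 0$ and \Cref{reg sum}, gives $\reg(AB) = \reg(A) + \reg(B)$; applied to $J = I_k L_{n-k}$ and $J \cap K = I_k L_{n-k+1}$ this produces the stated bound. The trickiest step, I expect, is ensuring $\psi(w) \in \G(K)$ rather than merely in $K$: the candidate $u^{\downarrow} v$ can fail minimality in $K$ if some $I_j L_{n-j}$ with $j < k-1$ supplies a strictly smaller generator dividing it, so one must pass to a minimal generator of $K$ dividing it while verifying that the $\K[y]$-part remains $v$, so that both conditions (a) and (b) survive.
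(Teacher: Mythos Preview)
Your computation of $J\cap K=I_kL_{n-k+1}$, the disjointness $\G(J)\cap\G(K)=\emptyset$, and the final regularity extraction are all correct and follow the paper's line. The genuine gap is in your verification of condition~(b) of \Cref{def:splittings}. You assert that ``the $\K[y]$-part of $\lcm(\phi(S))$ strictly divides the $\K[y]$-part of $\lcm(S)$ using $\G(L_{n-k+1})\cap\G(L_{n-k})=\emptyset$,'' but this does not follow from an \emph{arbitrary} choice of $v^\downarrow$: knowing that each $v_i^\downarrow$ properly divides $v_i$ does not force $\lcm_i(v_i^\downarrow)$ to properly divide $\lcm_i(v_i)$. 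Concretely, take $L_{n-k+1}=\langle y_1y_2,\,y_1y_3,\,y_2y_3\rangle$ and $L_{n-k}=\langle y_1,y_2,y_3\rangle$ (the closure hypothesis holds), and choose $v_1^\downarrow=y_1$, $v_2^\downarrow=y_3$, $v_3^\downarrow=y_2$; then $\lcm(v_1^\downarrow,v_2^\downarrow,v_3^\downarrow)=y_1y_2y_3=\lcm(v_1,v_2,v_3)$. With any fixed $u\in\G(I_k)$ and $S=\{uv_1,uv_2,uv_3\}\subseteq\G(J\cap K)$ one gets $\lcm(\phi(S))=\lcm(S)$, so condition~(b) fails. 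The same obstruction hits $\psi$ symmetrically.

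The paper avoids this by \emph{coordinating} the choices via a fixed monomial order $>$: for $f=f_1f_2$ it removes from $f_2$ a divisor $f_L$ that is required to contain $y_f\coloneqq\max_{>}\supp(f_2)$ (and symmetrically $f_I$ must contain $x_f\coloneqq\max_{>}\supp(f_1)$). Then for any finite $S$, the globally largest $y$-variable $y_S=\max_{>}\{y_h:h\in S\}$ is absent from every $\phi(h)$, since whenever $y_S\in\supp(h)$ one has $y_S=y_h\mid h_L$; hence $y_S\nmid\lcm(\phi(S))$ while $y_S\mid\lcm(S)$, giving the strict divisibility. Your construction is salvageable with exactly this coordination, but not with an uncontrolled choice of $v^\downarrow$ and $u^\downarrow$. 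Incidentally, the step you flagged as ``trickiest''---ensuring $\psi(w)\in\G(K)$ keeps $\K[y]$-part equal to $v$---is in fact fine; the real difficulty was the one you passed over.
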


\begin{proof}
    First we show that $\G(J)\cap \G(K)=\emptyset$. For this, it suffices to show that whenever $I_i$ is non-zero for some $i\in\mathbb N$, then $I_i\subsetneq I_{i-1}$, as an analogous statement for $L_i\neq \l 0\r$ would follow. Indeed, let $f\in \G(I_i)$, then $f/x\in I_{i-1}$ by our hypotheses for any $x\in \supp(f)$. Since $f\in \G(I)$, we have $f/x\notin I_{i}$. Thus, we have $\G(J)\cap \G(K)=\emptyset$.

    Note that, if $K=\l 0\r$, then $J=J+K$ is trivially a Betti splitting, and we have the result. Thus, we may assume $K\neq\l 0\r$. We next show that $J\cap K= I_k L_{n-k+1}$. Indeed, the reverse inclusion is clear. For the inclusion, since $I_i$ and $L_j$ share no variable for any $i$ and $j$, any minimal monomial generator of $J\cap K$ is of the form $\lcm(fg,f'g')$ where $f\in I_k, g\in L_{n-k}, f'\in I_{r},$ and $g'\in L_{n-r}$ for some $l \leq r<k$. It is then clear that $\lcm(fg,f'g')\in I_kL_{n-k+1}$, and thus the inclusion follows.

    Let $(>)$ be a fixed monomial ordering on $\{x_1,\dots, x_{N_1}, y_1,\dots y_{N_2}\}$ and $(>_{lex})$ be the lex ordering on the monomials induced from $(>)$.

    Consider any generator $f\in \G(J\cap K) = \G(I_kL_{n-k+1})$. Set $f=f_1f_2$ where $f_1\in \G(I_k)$ and $f_2\in \G(L_{n-k+1})$. Set 
    \begin{align*}
        x_f&\coloneqq \max_{>} \supp(f_1),\\
        y_f&\coloneqq \max_{>} \supp(f_2),\\
        f_I &\coloneqq \max_{>_{lex}} \{ g \colon x_f \mid g \text{ and } g\mid f_1,  \text{ and } f_1/g\in \G(I_{k-1}) \},\\
        f_{L} &\coloneqq \begin{multlined}[t]
            \max_{>_{lex}} \{g  \colon y_f \mid g \text{ and } g\mid f_2, \text{ and } f_2/g\in \G(L_{n-k} )  \}.
        \end{multlined}
    \end{align*}
    The existence of $f_I$ and $f_L$ is due to the hypotheses.
   
    Let $(\phi,\psi)$ define a map as follows: 
    \begin{align*}
        (\phi,\psi) \colon \G(J\cap K) &\to \G(J) \times \G(K)\\
        f&\mapsto \left( \frac{f}{f_{L}} , \frac{f}{f_I} \right).
    \end{align*}
    Due to our constructions, the map is well-defined. We show that $(\phi,\psi)$ indeed defines a splitting ~function:
    \begin{enumerate}
        \item[(a)] for each $f\in \G(J\cap K)$, we have $f=\lcm(\frac{f}{f_L},\frac{f}{f_{I}}) = \lcm(\phi(f),\psi(f))$ since $f_L$ and $f_{I}$ have disjoint~support;
        \item[(b)]  for each $S =\{h_1,\dots, h_s\}   \subseteq \G(J\cap K)$, both $\lcm(\phi(S)) $ and $\lcm(\psi(S))$ strictly divide $\lcm(S)$ since by design, we have 
        \begin{align*}
            x_S,y_S\mid \lcm(h_1,\dots, h_s) = \lcm(S),\tag*{(1)}\\
            x_S\nmid   \lcm(\frac{h_1}{{h_1}_I}, \dots, \frac{h_s}{{h_s}_I}) = \lcm(\phi(S)),\tag*{(2)}\\
            y_S\nmid   \lcm(\frac{h_1}{{h_1}_{L}}, \dots, \frac{h_s}{{h_s}_{L}}) = \lcm(\psi(S)), \tag*{(3)}
        \end{align*}
        where 
        \begin{align*}
            x_S\coloneqq \max_{>} \{ x_{{h_i}} \colon i\in [s] \},\\
            y_S\coloneqq \max_{>} \{ y_{{h_i}} \colon i\in [s] \}.
        \end{align*}
        Indeed, (1) follows from the definition of $x_S$ and $y_S$. Regarding (2), observe that if $x_S\in \supp(h_i)$ for some $i\in [s]$, then we have $x_S=x_{h_i}\mid {h_i}_I$. This implies that $x_S\nmid \frac{h_i}{{h_i}_I}$ for any $i\in [s]$. Thus, (2) follows. The proof for (3) is similar by symmetry. 
    \end{enumerate} 
    Thus, $I=J+K$ is a Betti splitting. The last statement follows from \Cref{splitting formula}.
    \end{proof}

We now apply \Cref{lem:Betti-splittings-sequences} in the special case of square-free powers of edge ideals to obtain an important regularity formula needed for the general lower bound of square-free powers. Here we adopt the convention that $I^{[l]}=R$ for any $l\le 0$.

\begin{lemma}\label{lem:square-free-power-splittings}
    Let $\mathcal{H}$ and $\mathcal{H}'$ be hypergraphs that have at least one edge each and no common vertex, and $k$ a positive integer such that $\nu(\mathcal{H})+1\leq k\leq \nu(\mathcal{H})+ \nu(\mathcal{H}')$. Then
    \[
    I(\mathcal{H}\sqcup \mathcal{H}')^{[k]} = J + K, 
    \]
    where 
    \begin{align*}
        J&= I(\mathcal{H})^{[\nu(\mathcal{H})]} I(\mathcal{H}')^{[k-\nu(\mathcal{H})]} , \\
        K&= I(\mathcal{H})^{[\nu(\mathcal{H})-1]} I(\mathcal{H}')^{[k-\nu(\mathcal{H})+1]} + \cdots + I(\mathcal{H})^{[k-\nu(\mathcal{H}')]} I(\mathcal{H}')^{[\nu(\mathcal{H}')]},
    \end{align*}
    is a Betti splitting. In particular, we have
    \begin{multline*}
         \reg\left( I(\mathcal{H}\sqcup \mathcal{H}')^{[k]} \right) \geq  \max \{ \reg \left( I(\mathcal{H})^{[\nu(\mathcal{H})]} \right) + \reg\left( I(\mathcal{H}')^{[k-\nu(\mathcal{H})]}\right),\\
        \reg \left( I(\mathcal{H})^{[\nu(\mathcal{H})]} \right) + \reg \left( I(\mathcal{H}')^{[k-\nu(\mathcal{H})+1]} \right) -1   \}.
    \end{multline*}
\end{lemma}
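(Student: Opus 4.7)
The strategy is to invoke Lemma~\ref{lem:Betti-splittings-sequences} directly with the substitutions $I_i = I(\mathcal{H})^{[i]}$ and $L_j = I(\mathcal{H}')^{[j]}$, and with the triple $(n, k, l)$ of that lemma set to $(k, \nu(\mathcal{H}), k - \nu(\mathcal{H}'))$, where $k$ denotes the parameter of the present lemma. The hypothesis $\nu(\mathcal{H}) + 1 \leq k \leq \nu(\mathcal{H}) + \nu(\mathcal{H}')$ translates precisely to the index constraint $l < k \leq n$ required there, with the borderline case $k = \nu(\mathcal{H}) + \nu(\mathcal{H}')$ (where $K = 0$, since each summand contains some $I(\mathcal{H}')^{[j]}$ with $j > \nu(\mathcal{H}')$) handled trivially. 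Once the setup is in place, the Betti splitting claim and the ``in particular'' regularity estimate transcribe verbatim from Lemma~\ref{lem:Betti-splittings-sequences}; so the real work is to verify the decomposition $I(\mathcal{H} \sqcup \mathcal{H}')^{[k]} = J + K$ and the monomial shrinking hypothesis of that lemma.

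For the decomposition, I would argue combinatorially. Since $V(\mathcal{H}) \cap V(\mathcal{H}') = \emptyset$, every edge of $\mathcal{H} \sqcup \mathcal{H}'$ lies entirely in one of the two hypergraphs, so any matching $M$ of size $k$ in the disjoint union splits uniquely as $M = M_1 \sqcup M_2$ with $M_1 \subseteq E(\mathcal{H})$ and $M_2 \subseteq E(\mathcal{H}')$. Writing $i = |M_1|$, the constraints $0 \leq i \leq \nu(\mathcal{H})$ and $0 \leq k - i \leq \nu(\mathcal{H}')$ together with $k \geq \nu(\mathcal{H}) + 1$ force $k - \nu(\mathcal{H}') \leq i \leq \nu(\mathcal{H})$. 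The associated minimal generator lies in $I(\mathcal{H})^{[i]} \cdot I(\mathcal{H}')^{[k-i]}$, and conversely every product of a size-$i$ matching monomial in $\mathcal{H}$ with a size-$(k-i)$ matching monomial in $\mathcal{H}'$ is a minimal generator of $I(\mathcal{H} \sqcup \mathcal{H}')^{[k]}$. Summing over admissible $i$ yields exactly the decomposition $J + K$ of the statement.

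The shrinking property is essentially immediate: any minimal generator of $I(\mathcal{H})^{[i]}$ has the form $f = \x_{E_1} \cdots \x_{E_i}$ for a matching $\{E_1, \ldots, E_i\}$ in $\mathcal{H}$, and for any $x \in \supp(f)$ the variable $x$ lies in a unique edge $E_j$, so $f/x$ is divisible by $\x_{E_1} \cdots \widehat{\x_{E_j}} \cdots \x_{E_i} \in I(\mathcal{H})^{[i-1]}$; this forces $f/x \in I(\mathcal{H})^{[i-1]}$. The inclusion extends to arbitrary elements of $I(\mathcal{H})^{[i]}$ because $I(\mathcal{H})^{[i-1]}$ is an ideal, and the analogous statement for the $L_j = I(\mathcal{H}')^{[j]}$ is identical by symmetry. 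With all the hypotheses verified, Lemma~\ref{lem:Betti-splittings-sequences} delivers both conclusions at once. The main obstacle is really just the careful index bookkeeping between the two lemmas; all of the genuine mathematical content is packaged inside Lemma~\ref{lem:Betti-splittings-sequences}.
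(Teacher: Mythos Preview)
Your proposal is correct and follows essentially the same approach as the paper: both verify the decomposition $I(\mathcal{H}\sqcup \mathcal{H}')^{[k]}=J+K$ directly from the definition of square-free powers, check the shrinking hypothesis $f/x\in I(\mathcal{H})^{[i-1]}$ for $f\in I(\mathcal{H})^{[i]}$, and then invoke Lemma~\ref{lem:Betti-splittings-sequences}. Your write-up is more explicit than the paper's (which dispatches the decomposition and the shrinking property in one sentence each), and you correctly isolate the borderline case $k=\nu(\mathcal{H})+\nu(\mathcal{H}')$ where $K=0$; otherwise the arguments are the same.
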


\begin{proof}
    The condition $\nu(\mathcal{H})+1\leq k\leq \nu(\mathcal{H})+ \nu(\mathcal{H}')$ implies that $J\neq \l 0\r$. The claim that $I(\mathcal{H}\sqcup \mathcal{H}')^{[k]}=J+K$ follows immediately from definition. It is clear that for any monomial $f\in I(\mathcal{H})^{[k]}$, we have $f/x\in I(\mathcal{H})^{[k-1]}$ for any $x\in \supp(f)$. The results then follow from Lemma~\ref{lem:Betti-splittings-sequences}.
\end{proof}

We are now ready to prove the main result of this section. Before that, we recall the result \cite[Corollary 1.3]{EHHS} on the regularity in the context of taking induced subgraphs. Note that while the authors stated the result for only simple graphs, their proof also works for any hypergraph. Thus, we have the following:

\begin{lemma}\label{lem:restriction-lemma}
    Let $\mathcal{H}$ be a hypergraph, $\mathcal{H}_1$ an induced sub-hypergraph of $\H$, and $k$ a positive integer. Then
    \begin{align*}
        \reg\left( I(\mathcal{H}_1)^{[k]} \right) \leq \reg\left( I(\mathcal{H})^{[k]} \right).
    \end{align*}
\end{lemma}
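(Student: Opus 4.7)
The plan is to induct on $n \coloneqq |V(\mathcal{H}) \setminus V(\mathcal{H}_1)|$, which reduces the claim to the case where exactly one vertex is removed. The base case $n=0$ is trivial since then $\mathcal{H}_1 = \mathcal{H}$. For the inductive step I would pick any $y \in V(\mathcal{H}) \setminus V(\mathcal{H}_1)$ and set $\mathcal{H}' \coloneqq \mathcal{H} \setminus y$. Since $\mathcal{H}_1$ remains an induced sub-hypergraph of $\mathcal{H}'$ and $|V(\mathcal{H}') \setminus V(\mathcal{H}_1)| = n-1$, the inductive hypothesis delivers $\reg(I(\mathcal{H}_1)^{[k]}) \leq \reg(I(\mathcal{H}')^{[k]})$. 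So the entire lemma comes down to proving the single-vertex-deletion inequality
\[
\reg\left( I(\mathcal{H}')^{[k]} \right) \leq \reg\left( I(\mathcal{H})^{[k]} \right).
\]

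The central observation I would exploit is the identity
\[
I(\mathcal{H})^{[k]} + \langle y \rangle \;=\; I(\mathcal{H}')^{[k]} + \langle y \rangle
\]
in $R = K[V(\mathcal{H})]$. Indeed, the minimal generators of $I(\mathcal{H})^{[k]}$ fall into two groups: those divisible by $y$, which are absorbed by $\langle y \rangle$, and those arising from $k$-matchings of $\mathcal{H}$ that avoid $y$. The latter are precisely the $k$-matchings of $\mathcal{H}'$, so they generate exactly $I(\mathcal{H}')^{[k]}$ modulo $\langle y \rangle$. Combining this identity with \Cref{regularity lemma}(i) applied to $I = I(\mathcal{H})^{[k]}$ and the variable $y$ produces
\[
\reg\left( I(\mathcal{H}')^{[k]} + \langle y \rangle \right) \;=\; \reg\left( I(\mathcal{H})^{[k]} + \langle y \rangle \right) \;\leq\; \reg\left( I(\mathcal{H})^{[k]} \right).
\]

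To finish, I would invoke the standard fact that quotienting by a linear non-zero-divisor preserves regularity. Since the variable $y$ does not appear in any generator of $I(\mathcal{H}')^{[k]}$, it is a non-zero-divisor on $R/I(\mathcal{H}')^{[k]}$, and the short exact sequence
\[
0 \to (R/I(\mathcal{H}')^{[k]})(-1) \xrightarrow{\,\cdot y\,} R/I(\mathcal{H}')^{[k]} \to R/(I(\mathcal{H}')^{[k]} + \langle y \rangle) \to 0
\]
together with the usual Tor-sequence bookkeeping gives $\reg(I(\mathcal{H}')^{[k]}) = \reg(I(\mathcal{H}')^{[k]} + \langle y \rangle)$. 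Chaining this equality with the previous inequality completes the inductive step and hence the proof.

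There is no serious obstacle: the entire argument rests on noticing the identity $I(\mathcal{H})^{[k]} + \langle y \rangle = I(\mathcal{H}')^{[k]} + \langle y \rangle$, after which the two regularity comparisons are immediate from \Cref{regularity lemma}(i) and the standard NZD-preservation of regularity. The only step requiring any mild care is the last one, where one must verify that $y$ is genuinely a non-zero-divisor on $R/I(\mathcal{H}')^{[k]}$; this is automatic because none of the generators of $I(\mathcal{H}')^{[k]}$ involves $y$.
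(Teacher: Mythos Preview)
Your argument is correct. The paper does not supply its own proof of this lemma; it simply recalls \cite[Corollary 1.3]{EHHS} and remarks that the same proof carries over to hypergraphs. Your induction on $|V(\mathcal{H})\setminus V(\mathcal{H}_1)|$ together with the identity $I(\mathcal{H})^{[k]}+\langle y\rangle = I(\mathcal{H}\setminus y)^{[k]}+\langle y\rangle$, \Cref{regularity lemma}(i), and the fact that a variable absent from all generators is a non-zero-divisor, is exactly the standard argument that the cited reference uses, so your approach coincides with the intended one. The only cosmetic caveat is the degenerate case $I(\mathcal{H}')^{[k]}=\langle 0\rangle$, where your NZD equality $\reg(I(\mathcal{H}')^{[k]})=\reg(I(\mathcal{H}')^{[k]}+\langle y\rangle)$ is false as written; but then $\reg(I(\mathcal{H}')^{[k]})=0$ by the paper's convention and the inequality is trivial, so no harm is done.
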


We first consider the special case when hyperedges of $\H$ are pairwise disjoint.

\begin{proposition}\label{prop:example-lower-bound-sharp}
    Let $\H$ be a hypergraph such that any two hyperedges of $\H$ have an empty intersection. Then for any integer $k$ where $1\leq k\leq \nu(\mathcal{H})$, we have
    \begin{align*}
        \reg \left( I(\mathcal{H})^{[k]} \right) &= \max\{ |V(M)|-|M|\colon \text{$M$ is a generalized $k$-admissible matching of $\H$}\}+k\\
        &= |V(\H)| - |E(\H)|+k.
    \end{align*}
\end{proposition}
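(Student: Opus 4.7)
The plan is to combine a combinatorial observation with an induction on $m := |E(\H)|$. Because the hyperedges of $\H$ are pairwise disjoint, for every subset $N \subseteq E(\H)$ the induced sub-hypergraph $\H[V(N)]$ has edge set equal to $N$ itself. Consequently, conditions~(1) and~(3) of Definition~\ref{def: aim} hold for \emph{any} partition of \emph{any} matching $M$, and condition~(2) is satisfied as soon as $|M| \geq k$ and the partition has enough parts. In particular, every matching $M \subseteq E(\H)$ with $|M| \geq k$ is a generalized $k$-admissible matching, realized by the partition consisting of one block of size $k$ together with $|M|-k$ singletons. Since $|V(M)|-|M| = \sum_{\E \in M}(|\E|-1)$ is monotone nondecreasing in $M$, its maximum over generalized $k$-admissible matchings is attained at $M = E(\H)$ and equals $|V(\H)|-|E(\H)|$, which establishes the second equality. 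Theorem~\ref{thm: ordinary lower bound} applied to $M = E(\H)$ then yields the lower bound $\reg(I(\H)^{[k]}) \geq |V(\H)|-|E(\H)|+k$.

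For the reverse inequality I would induct on $m$. The base case $m = 1$ is trivial, and the subcase $k = 1$ is handled directly by Lemma~\ref{reg sum} applied to the pairwise coprime generators $\x_{\E}$, giving $\reg(I(\H)) = |V(\H)|-|E(\H)|+1$. For $2 \leq k \leq m$, fix any edge $\E$, set $u = \x_{\E}$ and $\H' = \H \setminus V(\E)$, and apply Lemma~\ref{lem:square-free-power-splittings} with $\H_1 = \{\E\}$ and $\H_2 = \H'$ to obtain a Betti splitting $I(\H)^{[k]} = J + K$, where $J = u\, I(\H')^{[k-1]}$ and $K = I(\H')^{[k]}$. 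Because $u$ is coprime to all the variables appearing in $\H'$, a direct computation yields $J \cap K = u\, I(\H')^{[k]}$, so the standard Betti splitting formula gives
\[
\reg(I(\H)^{[k]}) = \max\{\reg(J),\, \reg(K),\, \reg(J \cap K) - 1\}.
\]
Invoking the inductive hypothesis on $\H'$ for both $I(\H')^{[k-1]}$ and $I(\H')^{[k]}$, each of the three terms is bounded above by $|V(\H)|-|E(\H)|+k$, which closes the induction.

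The only point requiring separate attention is the boundary case $k = m$, where $I(\H')^{[m]} = 0$ and the splitting degenerates to $I(\H)^{[k]} = J = \l u_1\cdots u_m \r$, a principal ideal of regularity $|V(\H)|$, in agreement with the formula. An alternative and slightly shorter route to the upper bound, bypassing Lemma~\ref{lem:square-free-power-splittings}, is to apply Lemma~\ref{regularity lemma}(iii) with the monomial $u$ together with the identities $I(\H)^{[k]} : u = I(\H')^{[k-1]}$ and $I(\H)^{[k]} + \l u\r = \l u\r + I(\H')^{[k]}$, then conclude via Lemma~\ref{reg sum} and the inductive hypothesis; this yields only the inequality, but avoids the explicit computation of $J \cap K$.
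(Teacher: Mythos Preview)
Your approach is essentially the same as the paper's --- induction on $m=|E(\H)|$ via the Betti splitting of \cref{lem:square-free-power-splittings} with $\H_1=\{\E\}$ and $\H_2=\H'$ --- but there is one genuine gap: you invoke \cref{thm: ordinary lower bound} to obtain the lower bound, yet in the paper \cref{prop:example-lower-bound-sharp} is \emph{used in the proof} of \cref{thm: ordinary lower bound} (it handles the base case where every block $M_i$ in the partition is a singleton). So as written the argument is circular.

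The fix is already contained in your own proof. You correctly state that the Betti splitting yields the \emph{equality}
\[
\reg\bigl(I(\H)^{[k]}\bigr)=\max\{\reg(J),\ \reg(K),\ \reg(J\cap K)-1\},
\]
and your inductive hypothesis on $\H'$ is an equality, not merely an upper bound. Hence
\[
\reg(J)=|\E|+\reg\bigl(I(\H')^{[k-1]}\bigr)=|\E|+|V(\H')|-(m-1)+(k-1)=|V(\H)|-m+k
\]
exactly, which already gives the lower bound without any appeal to \cref{thm: ordinary lower bound}. This is precisely what the paper does: it computes all three terms of the maximum explicitly from the induction hypothesis and observes that the maximum equals $\sum_i|\E_i|-q+k$. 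Once you make this small change (drop the reference to \cref{thm: ordinary lower bound} and read off the lower bound from $\reg(J)$), your argument and the paper's coincide. Your alternative route through \cref{regularity lemma}(iii) is a legitimate shortcut for the upper bound only, but since the splitting already gives equality there is no need for it here.
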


\begin{proof}
    Set $E(\H)=\{\E_1,\dots, \E_q\}$, and for any $i\in [q]$, set $m_i=\prod_{x\in \E_i} x$. As any two hyperedges of $\H$ have empty intersection, the monomials $\{m_i\}_{i=1,\dots, q}$ have pairwise disjoint supports. In particular, $I(\H)=(m_1,\dots, m_q)$ is a complete intersection. It is straightforward that that $E(\H)$ itself is a generalized $k$-admissible matching for any $1\leq k\leq \nu(\H)$. Therefore, we have
    \[
    \max\{ |V(M)|-|M|\colon \text{$M$ is a $k$-admissible matching of $\H$}\} = |V(E(\H))| - q = \sum_{i=1}^q |\E_i| - q.
    \]
    It now suffices to show that
    \[
    \reg \left( I(\mathcal{H})^{[k]} \right) = \sum_{i=1}^q |\E_i| - q+k,
    \]
    by induction on $q$ and $k$. If $q=1$, then we must have $k=1$. The result then follows immediately. On the other hand, if $k=1$, then as $I(\H)^{[k]}$ is a complete intersection, it is known that
    \[
    \reg \left( I(\mathcal{H})^{[k]} \right)  = \reg \left( I(\mathcal{H}) \right) =  \sum_{i=1}^q \deg m_i - (q-1) = \sum_{i=1}^q |\E_i| - q+k,
    \]
    as desired. We can now assume that  $q,k\geq 2$.
    Set $J\coloneqq m_q(m_1,\dots, m_{q-1})^{[k-1]}$ and $K\coloneqq (m_1,\dots, m_{q-1})^{[k]}$. It is straightforward that $J\cap K = m_q (m_1,\dots, m_{q-1})^{[k]}$. By $\cref{lem:square-free-power-splittings}$, $I(\H)^{[k]}=J+K$ is a Betti splitting, and thus by \cref{def:betti-splittings}, we have 
    \begin{multline*}
        \reg \left( I(\mathcal{H})^{[k]} \right) = \max\{ \reg \left( m_q(m_1,\dots, m_{q-1})^{[k-1]} \right) ,\quad \reg \left( (m_1,\dots, m_{q-1})^{[k]} \right) ,\\
        \reg \left( m_q(m_1,\dots, m_{q-1})^{[k]} \right) -1 \}.
    \end{multline*}
    By the induction hypotheses, we have 
    \begin{align*}
        \reg \left( (m_1,\dots, m_{q-1})^{[k-1]} \right) &= \sum_{i=1}^{q-1} |\E_i| - (q-1)+(k-1) = \sum_{i=1}^{q-1} |\E_i| - q+k,\\
        \reg \left( (m_1,\dots, m_{q-1})^{[k]} \right) &= \sum_{i=1}^{q-1} |\E_i| - (q-1)+k = \sum_{i=1}^{q-1} |\E_i| - q+k+1.
    \end{align*}
    Therefore, we have
    \begin{align*}
        \reg \left( I(\mathcal{H})^{[k]} \right) &=\begin{multlined}[t]
         \max\{\reg\left( m_q\right)+ \reg \left((m_1,\dots, m_{q-1})^{[k-1]} \right) ,\quad \reg \left( (m_1,\dots, m_{q-1})^{[k]} \right) ,\\
        \reg\left( m_q\right) +\reg \left((m_1,\dots, m_{q-1})^{[k]} \right) -1 \}
        \end{multlined}\\
        &=\max\{ \sum_{i=1}^{q} |\E_i| - q+k ,\quad  \sum_{i=1}^{q-1} |\E_i| - q+k+1,\quad
         \sum_{i=1}^{q} |\E_i| - q+k \}\\
         &= \sum_{i=1}^{q} |\E_i| - q+k,
    \end{align*}
    as desired. This concludes the proof.
\end{proof}

We are now ready to prove our main theorem of this section.

\begin{theorem}\label{thm: ordinary lower bound}
    Let $\H$ be a hypergraph and $k$ an integer where $1\leq k\leq \nu(\mathcal{H})$. Then for any generalized $k$-admissible matching $M$ of $\H$, we have
    \[
    \reg \left( I(\mathcal{H})^{[k]} \right) \geq |V(M)|-|M|+k.
    \]
    Consequently, we have
    \[
    \reg \left( I(\mathcal{H})^{[k]} \right) \geq \max\{ |V(M)|-|M|\colon \text{$M$ is a generalized $k$-admissible matching of $\H$}\}+k.
    \]
\end{theorem}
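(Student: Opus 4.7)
The plan is to first reduce, via Lemma~\ref{lem:restriction-lemma}, to proving the bound for $\reg(I(\H[V(M)])^{[k]})$, and then exploit the disjoint-union decomposition forced by the partition in Definition~\ref{def: aim}. Setting $\H_i \coloneqq \H[V(M_i)]$, condition (1) gives $\H[V(M)] = \H_1 \sqcup \cdots \sqcup \H_r$ with no cross-edges, while condition (3) together with Remark~\ref{aim property all hypergraphs}(3) makes each $I(\H_i)^{[|M_i|]}$ principal, generated by $m_i \coloneqq \prod_{x \in V(M_i)} x$ of degree $|V(M_i)|$. Once the problem is in this simplified shape, I would induct on $|M|$.

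For the base case $|M| = k$, the observation is that any matching of size $k$ in $\H[V(M)]$ must use exactly $|M_i|$ edges from each $\H_i$ (since $\sum_i |M_i| = k$), and by condition (3) such a submatching covers all of $V(M_i)$. This forces $I(\H[V(M)])^{[k]}$ to be principal, generated by $\prod_{x \in V(M)} x$, with regularity $|V(M)|$ as required. I would handle the boundary case $k = 1$ (where condition (2) forces every $|M_i| = 1$, so $M$ is an honest induced matching) separately, via iterated application of Lemma~\ref{reg sum} to the disjoint-variable sum $I(\H[V(M)]) = (m_1) + \cdots + (m_r)$.

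The heart of the argument is the inductive step with $k \ge 2$ and $|M| \ge k + 1$. First I would prove a small counting observation: some $|M_i|$ must be at most $k - 1$, since otherwise all $|M_i| \ge k$ would force $|M| \ge rk \ge r + k$, contradicting $|M| \le r + k - 1$. After relabeling so that $|M_r| \le k - 1$, the hypotheses of Lemma~\ref{lem:square-free-power-splittings} hold for the splitting $\H[V(M)] = \H_r \sqcup \H''$ with $\H'' \coloneqq \H_1 \sqcup \cdots \sqcup \H_{r-1}$. I would pick up the \emph{second} term of the max in that lemma, giving
\[ \reg(I(\H[V(M)])^{[k]}) \ge |V(M_r)| + \reg(I(\H'')^{[k']}) - 1, \qquad k' \coloneqq k - |M_r| + 1. \]

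To close the induction, I would verify that $M'' \coloneqq M_1 \sqcup \cdots \sqcup M_{r-1}$ is itself a generalized $k'$-admissible matching of $\H''$: conditions (1) and (3) are inherited directly, and the sandwich $k' \le |M''| \le (r-1) + k' - 1$ translates precisely into our case hypothesis $|M| \ge k + 1$ and the original condition $|M| \le r + k - 1$. Since $|M''| = |M| - |M_r| < |M|$, induction yields $\reg(I(\H'')^{[k']}) \ge |V(M)| - |V(M_r)| - |M| + k + 1$, and substituting telescopes to the desired $|V(M)| - |M| + k$. I expect the main obstacle to be this arithmetic bookkeeping at the boundary $|M| = r + k - 1$: the first term of Lemma~\ref{lem:square-free-power-splittings} would demand the stricter slack $|M| \le r + k - 2$, and only the $+1$ shift in $k'$, married to the $-1$ shift from the second term of the Betti splitting, will make the inductive parameters close up cleanly.
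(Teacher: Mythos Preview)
Your proof is correct and relies on the same core ingredients as the paper---restriction to $\H[V(M)]$ via Lemma~\ref{lem:restriction-lemma}, then the Betti-splitting bound of Lemma~\ref{lem:square-free-power-splittings}, with principality of each $I(\H_i)^{[|M_i|]}$ coming from condition~(3). The organization, however, is somewhat more economical. The paper inducts on the pair $(k,|E(\H)|)$, splits off $M_1$ (after arranging $|M_r|\ge 2$), and then branches on whether $|M|<r+k-1$ (first term of Lemma~\ref{lem:square-free-power-splittings}) or $|M|=r+k-1$ (second term); it also needs Proposition~\ref{prop:example-lower-bound-sharp} separately for the degenerate sub-case where every $|M_i|=1$. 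You instead induct solely on $|M|$, always peel off a block with $|M_r|\le k-1$, and uniformly use the second term of the splitting lemma; the resulting shift $k'=k-|M_r|+1\ge 2$ keeps you inside the inductive regime and makes both of the paper's extra case distinctions unnecessary. Your handling of $k=1$ by iterated Lemma~\ref{reg sum} also replaces the paper's citation of Morey--Villarreal. One small point worth making explicit in a write-up: the pigeonhole claim ``some $|M_i|\le k-1$'' tacitly uses $r\ge 2$, which follows from $|M|\ge k+1$ together with $|M|\le r+k-1$.
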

\begin{proof}
    Without loss of generality, assume that $\mathcal{H}$ has no isolated vertex. We induce on $k$ and $|E(\mathcal{H})|$. If $k=1$, then the result follows from \cite[Corollary 3.9]{MoreyVillarreal}. If $\mathcal{H}$ has one edge, then we must have $k=1$, and the result follows trivially. By induction, we can assume that $k\geq 2$ and $\mathcal{H}$ has at least two edges, and whenever $k'<k$ or $|E(\mathcal{H}')|<|E(\mathcal{H})|$, we have
    \[
    \reg \left( I(\mathcal{H}')^{[k']} \right) \geq |V(M')|-|M'|+k'
    \]
    where $M'$ is any generalized $k'$-admissible matching of $\H'$. Now assume that $M=\sqcup_{i=1}^r M_i$ is a generalized $k$-admissible matching  of $\H$ where the conditions in \cref{def: aim} hold. Set $a=|V(M)| - |M|$, and $a_i=|M_i|$ for any $i\in [r]$. By \cref{lem:restriction-lemma}, it suffices to show that
    \begin{equation}
        \reg\left( I(\mathcal{H}[V(M)])^{[k]} \right) \geq a+k.
    \end{equation}
    Suppose that $r=1$. Then $k\leq |M|=a_1\leq k$, and thus $|M|=a_1=k$. Since the ideal $I(\mathcal{H}[V(M)])^{[k]}$ is the principal ideal generated by the product of all variables in $M$, we have
    \[
    \reg\left( I(\mathcal{H}[V(M)])^{[k]} \right) = |V(M)|= \left(|V(M)| - |M|\right)+ |M| = a+k,
    \]
    as desired. Now we can assume that $r\geq 2$.  Due to condition (1) of \cref{def: aim}, the graph $\mathcal{H}[V(M)]$ is exactly the disjoint union of the hypergraphs $\H[V(M_i)]$ where $i$ ranges in $[r]$. Set $\H_1\coloneqq \mathcal{H}[V(M_1)]$, $M'=\sqcup_{i=2}^r M_i$, and $\H_2\coloneqq \mathcal{H}[V(M')] = \sqcup_{i=2}^r \mathcal{H}[V(M_i)]$. Then $\H_1$ and $\H_2$ share no vertex. If $a_i=1$ for any $i\in [r]$, then $M$ is a matching whose hyperedges are pairwise disjoint. The result then follows from \cref{prop:example-lower-bound-sharp}. We can now assume that $a_r\geq 2$. We then have $k-\nu(H_1)=k-a_1\geq \sum_{i=2}^r a_i - (r-1)> 0$ and $k\leq a_1+ \sum_{i=2}^ra_i=\nu(H_1)+\nu(H_2)$ by conditions (2) and (3) of \cref{def: aim}. Thus by \cref{lem:square-free-power-splittings},  we have
    \begin{equation}\label{equ:regularity-Betti-regular-power}
        \reg \left( I(\mathcal{H}[V(M)]) \right)^{[k]} \geq \max \{ |V(M_1)| + \reg \left( I(\H_2)^{[k-a_1]} \right), \quad |V(M_1)| + \reg \left( I(\H_2)^{[k-a_1+1]} \right) -1  \},
    \end{equation}
    where we used the fact that $\reg \left( I(\H_1)^{[a_1]}\right)$ is the principal ideal generated by a monomial of degree $|V(M_1)|$, from condition (3) of \cref{def: aim}. 
    Recall that we have $|M|=\sum_{i=1}^r a_i \leq r+k-1$. If $|M|<r+k-1$, then
    \[
    |M'| = \sum_{i=2}^r a_i = |M|-a_1 < (r+k-1)-a_1 = (r-1)+ (k-a_1),
    \]
    and hence
    \[
     |M'| \leq (r-1) + (k-a_1) - 1.
    \]
    On the other hand, since $k\leq |M|=a_1+|M'|$, we have $k-a_1\leq |M'|$. Thus, $M'=\sqcup_{i=2}^r M_i$ is a generalized $(k-a_1)$-admissible matching in $\H_2$. By induction, we have
    \begin{align*}
        \reg\left( I(\H_2)^{[k-a_1]} \right) \geq (|V(M')| - |M'|) + (k-a_1) &= (a-|V(M_1)| + a_1) + (k-a_1) \\
        &= a -|V(M_1)|+ k.
    \end{align*}
    Thus, by (\ref{equ:regularity-Betti-regular-power}), we have
    \[
    \reg \left( I(\mathcal{H}[V(M)])^{[k]} \right) \geq |V(M_1)| + \reg\left( I(\H_2)^{[k-a_1]} \right) \geq  |V(M_1)| + \left(a -|V(M_1)|+ k\right)= a+k, 
    \]
    as desired. Now we can assume that $|M|=r+k-1$. We have
    \[
    |M'| = \sum_{i=2}^r a_i = |M|-a_1 = (r+k-1)-a_1 = (r-1)+ (k-a_1+1)-1.
    \]
    Since $r\geq 2$, we have
    \[
    k-a_1+1\leq  (r-2) + (k-a_1+1) =  |M'| = (r-1)+ (k-a_1+1)-1.
    \]
    Thus, $M'=\sqcup_{i=2}^r M_i$ is a generalized $(k-a_1+1)$-admissible matching in $\H_2$. By induction, we ~have
    \begin{align*}
        \reg\left( I(\H_2)^{[k-a_1+1]} \right) \geq (|V(M')| -|M'|) + (k-a_1+1) &= (a-|V(M_1)| + a_1) + (k-a_1+1) \\
        &= a-|V(M_1)|+k+1.
    \end{align*}
    Thus, by (\ref{equ:regularity-Betti-regular-power}), we have
    \begin{align*}
        \reg \left( I(\mathcal{H}[V(M)])^{[k]} \right) &\geq |V(M_1)| + \reg\left( I(\H_2)^{[k-a_1+1]} \right) -1\\
        &\geq  |V(M_1)| + \left(a-|V(M_1)|+k+1 \right)-1 \\
        &= a+k, 
    \end{align*}
    as desired. This concludes the proof.
\end{proof}

In particular, the lower bound in this result is sharp, as equality is achieved in the case where $I(\H)$ is a complete intersection (\cref{prop:example-lower-bound-sharp}).

This lower bound can be related to more familiar concepts in the literature in the case of $d$-uniform hypergraphs and, in particular, simple graphs.  

\begin{definition}\label{def:aim-d-uniform}
    Let $\H$ be a $d$-uniform hypergraph. For any $1\leq k\leq \nu(\H)$, we say a matching $M$ of $\H$ is a \textit{$k$-admissible matching} if there is a partition $M=M_1\sqcup \cdots \sqcup M_r$ such that 
    \begin{enumerate}
        \item for any edge $\E \text{ in } \H[V(M)]$, we have $\E\in E(\H[V(M_i)])$ for some $i\in [r]$;
        \item $|M|\leq r+k-1$.
    \end{enumerate}
    Now, we define the \textit{$k$-admissible matching number} of $\H$, denoted by $\mathrm{aim}(\H, k)$, as follows
    \[
    \mathrm{aim}(\H, k) \coloneqq \max\{|M| \mid M \text{ is a $k$-admissible matching of $\H$}\}.
    \]
\end{definition}

We note that our definition of $k$-admissible matching is a refinement of \cite[Definition 12]{ErHi1} in the case when $\H$ is a simple graph $G$. Observe that for a simple graph $G$, the condition (1) in \cref{def:aim-d-uniform} is the same as saying that the edges $e_i\in M_i$ and $e_j\in M_j$ form a gap (in the sense of \cite{ErHi1}) for any $i\neq j$.  The only difference, therefore, is that Erey and Hibi considered an additional assumption, namely the induced subgraphs on $M_i$ for each $i\in [r]$ need to be forests. 

The following facts are straightforward to verify; interested readers may look at the analogous results in \cite[Page 6]{ErHi1}. 

\begin{remark}\label{aim property} Let $\H$ be a $d$-uniform hypergraph.
    \begin{enumerate}
    \item A matching $M$ of the hypergraph $\H$ is a $1$-admissible matching if and only if $M$ is an induced matching of $\H$. Thus, $\nu_1(\H) = \mathrm{aim}(\H, 1)$.
    
    \item If $1\leq k \leq  \nu(\H)$, then every $k$-admissible matching of $\H$ is also a $(k+1)$-admissible matching. In particular,
    \[
    \nu_1(\H)=\aim(\H,1)\leq \aim(\H,2)\leq \cdots \leq \aim(\H,\nu(\H))\leq \nu(\H).
    \]
    
    \item For any $1\leq k\leq \nu(\H)$, $\aim(\H, k)\geq k$.

    \item For any $2\leq k\leq \nu(\H)$, $\aim(\H, k)\leq \aim(\H, k-1)+1$.
\end{enumerate}
\end{remark}

\begin{remark}\label{rem:admissable-is-generalized-admissible}
    Let $\H$ be a $d$-uniform hypergraph. Then, one can verify that any $k$-admissible matching $M$ of $\H$ with $\vert M\vert \geq k$ is actually a generalized $k$-admissible matching. For the computation of $\aim(\H,k)$, the following proposition suggests that it is enough to consider only generalized $k$-admissible matching of $\H$. 
\end{remark}

\begin{proposition}\label{prop:aim=lower-bound-for-all-hypergraphs}
    Let $\H$ be a $d$-uniform hypergraph and $1\leq k\leq \nu(\mathcal{H})$. Then
    \[
    (d-1)\aim(\H,k) =  \max\{ |V(M)|-|M|\colon \text{$M$ is a generalized $k$-admissible matching of $\H$}\}.
    \]
\end{proposition}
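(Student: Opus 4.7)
The plan is to reduce the claimed identity to a statement about matching cardinalities. Since $\H$ is $d$-uniform and every matching $M$ of $\H$ consists of pairwise disjoint edges each of size $d$, we have $|V(M)| = d|M|$, and therefore $|V(M)|-|M| = (d-1)|M|$. Consequently the right-hand side equals
\[
(d-1)\cdot\max\{|M|: M \text{ is a generalized }k\text{-admissible matching of }\H\},
\]
and it suffices to show that this maximum equals $\aim(\H,k)$.

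For the inequality $\max \leq \aim(\H,k)$, I would observe that any generalized $k$-admissible matching is in particular a $k$-admissible matching, by a direct comparison of Definition~\ref{def: aim} with Definition~\ref{def:aim-d-uniform}: condition (1) is identical in both, and the upper bound $|M|\leq r+k-1$ in condition (2) of Definition~\ref{def: aim} is exactly condition (2) of Definition~\ref{def:aim-d-uniform}. Every generalized $k$-admissible matching therefore contributes to the maximum defining $\aim(\H,k)$, giving the desired inequality.

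For the reverse inequality, I would invoke Remark~\ref{aim property}(3) to know that $\aim(\H,k)\geq k$, and then select a $k$-admissible matching $M$ of size $\aim(\H,k)\geq k$ together with a witnessing partition $M = M_1\sqcup\cdots\sqcup M_r$. By Remark~\ref{rem:admissable-is-generalized-admissible}, this $M$ is a generalized $k$-admissible matching, which gives the reverse inequality. To justify that remark (the only non-formal step), conditions (1) and the upper bound in (2) of Definition~\ref{def: aim} are inherited from the $k$-admissibility of $M$; the lower bound $|M|\geq k$ holds by the choice of $M$; and condition (3) is automatic in the $d$-uniform setting, because any matching $M_i' \subseteq \H[V(M_i)]$ with $|M_i'|=|M_i|$ satisfies $V(M_i')\subseteq V(M_i)$ and $|V(M_i')|=d|M_i'|=d|M_i|=|V(M_i)|$, forcing $V(M_i')=V(M_i)$.

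The argument is a direct comparison of the two definitions and presents no substantive obstacle. The essential role of the $d$-uniform hypothesis is twofold: it converts $|V(M)|-|M|$ into $(d-1)|M|$ by a vertex count, and it makes condition (3) of Definition~\ref{def: aim} automatic for any matching, so that the only real distinction between generalized $k$-admissibility and $k$-admissibility is the lower-bound condition $|M|\geq k$, which is met whenever $|M|=\aim(\H,k)$.
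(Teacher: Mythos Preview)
Your proof is correct and follows essentially the same route as the paper's: reduce to comparing $\aim(\H,k)$ with the maximum size of a generalized $k$-admissible matching via $|V(M)|-|M|=(d-1)|M|$, then match the two maxima by checking that generalized $k$-admissibility implies $k$-admissibility, and that a $k$-admissible matching of maximum size (hence of size $\geq k$) is generalized $k$-admissible. The paper proves $\aim(\H,k)\geq k$ inline rather than citing Remark~\ref{aim property}(3), but otherwise the arguments are the same; your explicit verification of condition~(3) via the vertex count is a welcome addition.
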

\begin{proof}
    Note that if $M$ is a $k$-admissible matching of $M$, then $M$ is also a $k$-admissible matching, and $|V(M)|-|M|=(d-1)|M|$. Therefore, by \cref{rem:admissable-is-generalized-admissible}, to prove the equality, it is enough to show that if $M$ is a $k$-admissible matching of $\H$ such that $|M|=\aim(\H,k)$, then  $|M|\ge k$. Evidently, since $k\le \nu(\H)$, there exists a matching $M'$ of $\H$ such that $|M'|=k$. Thus if $|M|<k$, then $|M|<|M'|$, where $M'=M_1'$ is a $k$-admissible matching of $\H$ (by \Cref{def:aim-d-uniform}), a contradiction.
\end{proof}

A straightforward and important consequence of our previous results is that we can use $\aim(\H,k)$ to establish a lower bound for the regularity of square-free powers of $I(\H)$, in the case $\H$ is a $d$-uniform~hypergraph.

\begin{theorem}\label{thm: ordinary lower bound d uniform}
    Let $\H$ be a $d$-uniform hypergraph and $k$ an integer where $1\leq k\leq \nu(\mathcal{H})$. Then 
    \[
    \reg \left( I(\mathcal{H})^{[k]} \right) \geq (d-1)\aim(\H,k) +k.
    \]
    Specifically, for a simple graph $G$ and for any $1\leq k\leq \nu(G)$, we have 
    \[
    \reg \left( I(G)^{[k]}\right) \geq \aim(G,k)+k.
    \]
\end{theorem}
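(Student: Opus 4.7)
The plan is to deduce this theorem almost immediately from the general lower bound established in Theorem~\ref{thm: ordinary lower bound}, combined with the dictionary between $k$-admissible matchings and generalized $k$-admissible matchings in the $d$-uniform setting recorded in Proposition~\ref{prop:aim=lower-bound-for-all-hypergraphs}. So this result is essentially a corollary, and the main work has already been done upstream.

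First, I would fix a $k$-admissible matching $M$ of $\H$ realizing $|M|=\aim(\H,k)$. Because $\aim(\H,k)\geq k$ by Remark~\ref{aim property}(3), Remark~\ref{rem:admissable-is-generalized-admissible} (or equivalently the argument in the proof of Proposition~\ref{prop:aim=lower-bound-for-all-hypergraphs}) ensures that $M$ is in fact a generalized $k$-admissible matching in the sense of Definition~\ref{def: aim}. Since $\H$ is $d$-uniform and $M$ is a matching, the hyperedges of $M$ are pairwise disjoint and each has exactly $d$ vertices, so $|V(M)| = d|M|$. Hence
\[
|V(M)| - |M| = (d-1)|M| = (d-1)\aim(\H,k).
\]

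Next, I would apply Theorem~\ref{thm: ordinary lower bound} directly to this generalized $k$-admissible matching $M$ to obtain
\[
\reg\bigl(I(\H)^{[k]}\bigr) \geq |V(M)| - |M| + k = (d-1)\aim(\H,k) + k,
\]
which is the desired inequality. The specific case of a simple graph $G$ follows by setting $d=2$, since any simple graph is a $2$-uniform hypergraph; then $(d-1)\aim(G,k)+k = \aim(G,k)+k$.

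Since all the real machinery (Betti splittings, the inductive argument on $k$ and $|E(\H)|$, and the complete intersection base case of Proposition~\ref{prop:example-lower-bound-sharp}) is already carried out in Theorem~\ref{thm: ordinary lower bound}, there is no substantive obstacle here. The only point that requires a moment's care is justifying that a maximum-size $k$-admissible matching is actually a generalized $k$-admissible matching, which is precisely the content of Proposition~\ref{prop:aim=lower-bound-for-all-hypergraphs} and can simply be cited.
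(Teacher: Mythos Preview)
Your proposal is correct and matches the paper's own proof, which simply cites Theorem~\ref{thm: ordinary lower bound} and Proposition~\ref{prop:aim=lower-bound-for-all-hypergraphs}. You have added a bit more detail (spelling out $|V(M)|=d|M|$ and invoking Remark~\ref{aim property}(3) and Remark~\ref{rem:admissable-is-generalized-admissible}), but the argument is the same.
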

\begin{proof}
    The result follows immediately from \cref{thm: ordinary lower bound} and \cref{prop:aim=lower-bound-for-all-hypergraphs}.
\end{proof}

\begin{remark}
    To distinguish the notion of $k$-admissible matching by Erey and Hibi from the one defined here, we denote their $k$-admissible matching number by $\mathrm{aim}^*(G,k)$ (In \cite[Definition~13]{ErHi1}, they denoted this number by $\mathrm{aim}(G,k)$). For a simple graph $G$, it is immediate from the two definitions that $\mathrm{aim}^*(G,k)\leq \mathrm{aim}(G,k)$. Note that $\aim^*(G,k)=\aim(G,k)$, when $G$ is a forest. However, the difference can be arbitrarily large in the case of arbitrary graphs. For instance, if we consider $G$ to be the complete graph on $2k$ vertices, then $\mathrm{aim}(G,k)=k$, whereas $\mathrm{aim}^*(G,k)=1$.
\end{remark}


\section{Square-free powers of block graphs}\label{sec: 4}

In this section, we consider the family of block graphs and derive a combinatorial formula for the regularity of their square-free powers in terms of the admissible matching numbers. The class of block graphs is a natural generalization of forests. Thus, our formula can be seen as an extension of the regularity formula proved in \cite[Theorem 3.6]{CFL1} for the square-free powers of ~forests.

A \emph{block graph} $G$ is a chordal graph such that any two maximal cliques in $G$ intersect in at most one common vertex. Before going to the proof of the regularity formula, we first introduce some notations and terminologies related to block graphs. Based on these, we are able to establish some structural properties of block graphs that will be helpful later in this section.

\begin{definition}
	Let $G$ be a block graph.
	\begin{enumerate}
		\item[$\bullet$] A maximal clique in $G$ is called a \textit{block}. Let $\Bl$ denote the set of all blocks of $G$.

		\item[$\bullet$] A block $B'\in \Bl$ is said to be a \textit{block neighbor} of $B\in \Bl$ if $|V(B)\cap V(B')|=1$. The set $\{B'\in \Bl\colon |V(B)\cap V(B')|=1\}$ is called the \textit{block neighborhood} of $B$ in $G$, and is denoted by~$\N_G(B)$.
		
		\item[$\bullet$] A block $B\in\Bl$ is said to be a \textit{leaf block} if the number of free vertices in $B$ is at least $|V(B)|-1$.
		
		\item[$\bullet$] A block $B\in \Bl$ is said to be a \textit{distant leaf block} if $B$ is a leaf block and at most one block neighbor of $B$ in $G$ is not a leaf block.
		
		\item[$\bullet$] Let $G$ be a block graph, and $B_1,B_2$ two different blocks in $G$. A \textit{block path} between $B_1$ and $B_2$ of length $n$ is a collection of blocks $\P: D_1,D_2,\ldots, D_n$ satisfying the following ~conditions:
		\begin{enumerate}[label=(\roman*)]
			\item $D_1=B_1$, $D_n=B_2$;
			\item  $D_i\neq D_j$ for each $i\neq j$;
			\item for each $i\neq j$, $V(D_i)\cap V(D_j)\neq\emptyset$ if and only if $j=i-1$ or $j=i+1$.
		\end{enumerate}
	\end{enumerate}
\end{definition}
We explain the above terminologies in the following example.
\begin{example}
    Let $G$ be a block graph as in \Cref{block G}. The block on the vertex set $\{x_{13},x_{14}\}$ is a distant leaf block and its block neighbors are the blocks on the vertex sets $\{x_{13},x_{15}\}$ and $\{x_{10},x_{11},x_{12},x_{13}\}$. Note that the block on the vertex set $\{x_6,x_9\}$ is a leaf block but not a distant leaf block. The block path $\P:D_1,D_2,D_3$ is the (unique) block path between $D_1$ and $D_3$, where $V(D_1)=\{x_1,x_2,x_3\}$, $V(D_2)=\{x_3,x_4,x_5,x_6\}$ and $V(D_3)=\{x_6,x_{10}\}$.
\end{example}

\begin{figure}[h!]
        \centering
         \begin{tikzpicture}
                [scale=1]
            \draw [fill] (-1.2,2) circle [radius=0.08];
            \draw [fill] (-1.2,0) circle [radius=0.08];
            
            \draw [fill] (0,1) circle [radius=0.08];
            \draw [fill] (1,2) circle [radius=0.08];
            \draw [fill] (1,0) circle [radius=0.08];
            \draw [fill] (2,1) circle [radius=0.08];

            \draw [fill] (2.3,2.2) circle [radius=0.08];
            \draw [fill] (3,1.8) circle [radius=0.08];

            \draw [fill] (2.5,0) circle [radius=0.08];

            \draw [fill] (4,1) circle [radius=0.08];
            \draw [fill] (6,1) circle [radius=0.08];
            \draw [fill] (5,2) circle [radius=0.08];
            \draw [fill] (5,0) circle [radius=0.08];
            \draw [fill] (7,2) circle [radius=0.08];
            \draw [fill] (7,0) circle [radius=0.08];
            \draw [fill] (9,1) circle [radius=0.08];
            \draw [fill] (10,1) circle [radius=0.08];

            \node at (-1.2,2.3) {$x_1$};
            \node at (-1.2,-0.3) {$x_2$};
            \node at (-0.5,1) {$x_3$};
            \node at (1,2.5) {$x_4$};
            \node at (1,-0.5) {$x_5$};
            \node at (2,0.5) {$x_6$};
            \node at (2.3,2.5) {$x_7$};
            \node at (3,2.2) {$x_8$};
            \node at (2.5,-0.3) {$x_9$};

            \node at (4,1.5) {$x_{10}$};
            \node at (5,2.5) {$x_{11}$};
            \node at (5,-0.5) {$x_{12}$};
            \node at (6,1.5) {$x_{13}$};
            \node at (7,2.4) {$x_{14}$};
            \node at (7,-0.5) {$x_{15}$};
            \node at (8.8,1.5) {$x_{16}$};
            \node at (10.2,1.5) {$x_{17}$};

            \draw (0,1)--(-1.2,2)--(-1.2,0)--(0,1);
            \draw (0,1)--(1,2)--(1,0)--(2,1)--(0,1)--(1,0);
            \draw (2,1)--(1,2);
            \draw (2,1)--(4,1)--(5,0)--(6,1)--(5,2)--(4,1)--(6,1)--(7,2);
            \draw (5,2)--(5,0);
            \draw (6,1)--(7,0);
            \draw (2.3,2.2)--(2,1)--(3,1.8)--(2.3,2.2);
            \draw (2.5,0)--(2,1);
            \draw (9,1)--(10,1);
\end{tikzpicture}
        \caption{A block graph $G$.}
        \label{block G}
    \end{figure}
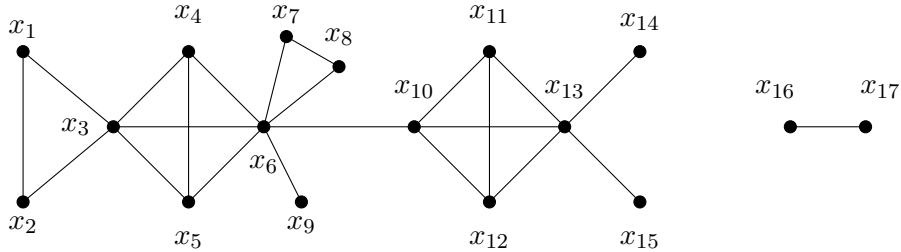

Here, we remark that similar to trees, there exists a (unique) block path between any two blocks in a connected block graph, and there always exists a distant leaf block. These facts might be well-known in the literature on block graphs, but we provide proofs below for the sake of completeness.
\begin{lemma}\label{block path existence}
	Let $G$ be a connected block graph. Then, between any two different blocks in $G$, there is a unique block path.
\end{lemma}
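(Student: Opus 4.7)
The plan is to reduce the statement to the standard fact that the block-cut tree $T(G)$ of a connected graph $G$ is a tree. Recall $T(G)$ has vertex set $\Bl \sqcup C$, where $C$ is the set of cut vertices of $G$, with an edge between a block $B$ and a cut vertex $v$ whenever $v\in V(B)$. The key property I would exploit is that in a block graph, any two distinct blocks intersect in at most one vertex, which must then be a cut vertex; so shared vertices between blocks correspond bijectively to edges in $T(G)$.

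For existence, I would take the unique path in $T(G)$ between $B_1$ and $B_2$; this path alternates as $B_1 = D_1, c_1, D_2, c_2, \ldots, c_{n-1}, D_n = B_2$, and I claim $D_1, \ldots, D_n$ is a block path. Conditions (i) and (ii) are immediate (the $D_i$ are distinct as vertices in a path in $T(G)$). Consecutive blocks $D_i, D_{i+1}$ share the vertex $c_i$, so $V(D_i) \cap V(D_{i+1}) \neq \emptyset$. Conversely, if non-consecutive blocks $D_i$ and $D_j$ shared a vertex $v$, then $v$ would be a cut vertex adjacent to both $D_i$ and $D_j$ in $T(G)$, giving a second path between $D_i$ and $D_j$ in $T(G)$, contradicting that $T(G)$ is a tree.

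For uniqueness, I would reverse the correspondence: given any block path $\P: D_1, D_2, \ldots, D_n$ from $B_1$ to $B_2$, form the walk $D_1, c_1, D_2, c_2, \ldots, c_{n-1}, D_n$ in $T(G)$, where $c_i$ is the unique vertex in $V(D_i) \cap V(D_{i+1})$ (a single vertex, since $G$ is a block graph). This walk has no repetitions: the $D_i$ are distinct by (ii), and $c_i = c_j$ with $i < j$ would force $c_i \in V(D_i) \cap V(D_{j+1})$, violating (iii). Hence $\P$ yields a genuine path in $T(G)$ between $B_1$ and $B_2$, and uniqueness of $\P$ follows from uniqueness of paths in the tree $T(G)$.

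The main point requiring care is the standard block-cut tree theorem itself (which I would cite rather than reprove), together with correctly handling degenerate cases such as $B_1, B_2$ being adjacent or either being a leaf in $T(G)$; once the bijection between block paths in $G$ and paths in $T(G)$ is in hand, everything else is routine verification. The block-graph hypothesis enters only to guarantee that consecutive blocks share a unique vertex, making the correspondence well-defined.
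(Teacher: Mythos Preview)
Your proof is correct but takes a genuinely different route from the paper's.

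The paper argues directly and self-containedly. For existence, it picks a shortest induced path in $G$ between a vertex of $B$ and a vertex of $B'$, and shows that the maximal cliques containing the successive edges of this path form a block path (minimality of the path length forces condition~(iii)). For uniqueness, it assumes two distinct block paths exist, tracks the first place they diverge, and produces a cycle in $G$ each of whose edges lies in a different maximal clique; chordality then forces an induced triangle whose edges lie in distinct maximal cliques, contradicting the block-graph hypothesis that two maximal cliques meet in at most one vertex.

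Your approach is more structural: you invoke the block--cut tree $T(G)$ and set up a bijection between block paths in $G$ and paths in $T(G)$, so both existence and uniqueness reduce to $T(G)$ being a tree. This is cleaner and places the lemma in its natural context, at the cost of citing an external theorem the paper has chosen to avoid. One small point worth making explicit: the classical block--cut tree is built from maximal $2$-connected subgraphs, whereas the paper's $\Bl$ consists of maximal cliques; these coincide precisely because $G$ is a block graph, so the block-graph hypothesis is doing a bit more work than your closing remark suggests (it is needed not only for the uniqueness of $c_i$, but for the identification $\Bl = \{\text{classical blocks}\}$ that makes the cited theorem applicable). Once that identification is noted, your argument goes through without difficulty.
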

\begin{proof}
	Let $B,B'$ be two different blocks in $G$. Let
	\[
	\Sigma=\{ P\colon a_1,\ldots,a_n\mid a_1\in V(B),a_n\in V(B'),P \text{ is an induced path in }G\}.
	\]
	Choose some $P\colon a_1,\ldots,a_n\in\Sigma$ of the shortest length. Then for each $i\in [n-1]$, there exist distinct $D_{i+1}\in \Bl$ such that $D_{i+1}\neq D_i$ for each $i$, and $\{a_i,a_{i+1}\}\in E(D_{i+1})$. Thus, the collection of blocks $B_1=D_1,D_2,\ldots,D_{n-1},D_n=B'$ satisfy conditions (i) and (ii) in the definition of block path. Moreover, if condition (iii) is not satisfied, then $P$ cannot be of the shortest length. Thus, we have a block path between $B$ and $B'$.

	To prove the uniqueness of such a block path, suppose there are two block paths $\P:D_1,D_2,\ldots,D_n$ and $\P': D_1',D_2',\ldots, D_m'$, where $D_1=D_1'=B$ and $D_n=D_m'=B'$. If $n=2$, then using condition (iii) in the definition of block paths, we see that $m=2$, and thus $\P=\P'$. Therefore, we may assume that $m,n\ge 3$. Let $V(D_i)\cap V(D_{i+1})=\{x_i\}$ for each $i\in [n-1]$ and $V(D_j')\cap V(D_{j+1}')=\{y_j\}$ for each $j\in[m-1]$. First, consider the case when $x_1\neq y_1$. Suppose $2\le i\le n$ and $2\le j\le m$ are the smallest possible integers such that $V(D_i)\cap V(D_j')\neq\emptyset$. Let $V(D_i)\cap V(D_j')=\{z\}$ for some $z\in V(G)$. Then the vertices in $W=\{x_1,\ldots,x_{i-1},z,y_{j-1},\ldots,y_{1}\}$ forms a (not necessarily induced) cycle of length at least $3$ in $G$ such that each edge of this cycle belongs to different blocks. Let $H$ denote the induced subgraph on $W$. Then $H$ contains an induced cycle $C_t$. Since $G$ is chordal, we must have $t=3$. Observe that at least two edges of this cycle belong to different blocks, and hence, each edge is in a different block. Consequently, the block containing this cycle intersects a different block along an edge, a contradiction to the fact that $G$ is a block graph. Thus, we have $\P=\P'$. The case $x_1=y_1$ can be dealt in a similar ~way.
\end{proof}

\begin{lemma}\label{distant leaf block existence}
	Let $G$ be a block graph. Then, $G$ contains a distant leaf block.
\end{lemma}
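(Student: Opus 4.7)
The plan is to consider a block path of maximum length in $G$ and argue that one of its endpoints must be a distant leaf block. Without loss of generality, I may assume $G$ is connected, since I can work with any connected component containing an edge. If $G$ consists of a single block, that block is vacuously a distant leaf block, as it has no block neighbors and every vertex is free. Otherwise, let $\P \colon D_1, D_2, \ldots, D_n$ be a block path in $G$ of maximum length, with $n \geq 2$; this exists because $G$ has finitely many blocks. Set $c \coloneqq V(D_1) \cap V(D_2)$. I will show that $D_1$ is a distant leaf block.

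The first step is to prove that $D_1$ is a leaf block. Suppose, toward a contradiction, that $D_1$ contains a cut vertex $c' \neq c$, belonging to some block $D' \neq D_1$. The block graph property that distinct blocks share at most one vertex, combined with condition (iii) of a block path, forces $D' \notin \P$: if $D' = D_j$ for some $j \geq 3$, then $c' \in V(D_1) \cap V(D_j) = \emptyset$, and if $D' = D_2$, then $c' \in V(D_1) \cap V(D_2) = \{c\}$, each a contradiction. I then claim $V(D') \cap V(D_j) = \emptyset$ for all $j \geq 2$: otherwise, one could produce two distinct block paths from $D'$ to $D_j$, violating the uniqueness of block paths established in \Cref{block path existence}. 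Therefore, $D', D_1, D_2, \ldots, D_n$ is a valid block path of length $n+1$, contradicting the maximality of $\P$.

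The second step is to show that at most one block neighbor of $D_1$ fails to be a leaf block. Since $c$ is the unique cut vertex of $D_1$ (by Step 1), the block neighbors of $D_1$ are precisely the blocks distinct from $D_1$ that contain $c$, one of which is $D_2$. Let $D'$ be another such block neighbor, so $D' \neq D_1, D_2$ and $c \in V(D')$. By the same argument via \Cref{block path existence}, $V(D') \cap V(D_j) = \emptyset$ for all $j \geq 3$, so that $D', D_2, D_3, \ldots, D_n$ is a valid block path of length $n$. Suppose, toward a contradiction, that $D'$ is not a leaf block; then $D'$ has a cut vertex $c'' \neq c$, lying in some block $D'' \neq D'$. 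The block graph property and a further application of \Cref{block path existence} yield $D'' \notin \{D_1, D_2, \ldots, D_n\}$ and $V(D'') \cap V(D_j) = \emptyset$ for all $j \geq 2$. Hence $D'', D', D_2, D_3, \ldots, D_n$ is a valid block path of length $n+1$, once again contradicting the maximality of $\P$. Thus every block neighbor of $D_1$ other than $D_2$ is a leaf block, and so $D_1$ is a distant leaf block.

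The main obstacle is to verify condition (iii) in the definition of a block path for the modified sequences produced in the two steps. This reduces to the structural consequence of \Cref{block path existence}: because there is a unique block path between any two blocks, a block lying outside $\P$ cannot share vertices with multiple blocks of $\P$, which is precisely what prevents the appearance of shortcuts that would spoil the block path property of the claimed extensions.
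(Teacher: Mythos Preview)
Your strategy coincides with the paper's: take a block path $\P\colon D_1,\dots,D_n$ of maximum length and show that $D_1$ is a distant leaf block by extending $\P$ whenever the relevant structural claim fails. The place where your argument breaks down is the appeal to \Cref{block path existence}.

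In Step~1 you claim that if $V(D')\cap V(D_j)\neq\emptyset$ for some $j\ge 2$, then one can exhibit two distinct block paths from $D'$ to $D_j$. That is not so: because $D'$ and $D_j$ already intersect, the only block path between them is $D',D_j$ of length two; any attempt at a longer route such as $D',D_1,\dots,D_j$ violates condition~(iii) for exactly the reason you are trying to exploit. For $j\ge 3$ the fix is easy---look instead at block paths from $D_1$ to $D_j$, where $D_1,D_2,\dots,D_j$ and $D_1,D',D_j$ are genuinely distinct---but for $j=2$ no pair of blocks in the configuration $D_1,D_2,D'$ admits two block paths, so uniqueness yields nothing. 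The paper handles this boundary case directly: the three pairwise intersection points are distinct and form a triangle in $G$, and the maximal clique containing that triangle must meet one of $D_1,D_2,D'$ in at least two vertices, contradicting the block graph hypothesis. The same issue resurfaces in Step~2 at $j=3$. Your closing slogan, that a block outside $\P$ cannot meet multiple blocks of $\P$, is false as stated (any block containing the cut vertex $V(D_i)\cap V(D_{i+1})$ meets both $D_i$ and $D_{i+1}$), and the precise disjointness you need does not follow from \Cref{block path existence} alone.
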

\begin{proof}
	Let $\P\colon D_1,D_2,\ldots,D_n$ be a block path of the maximum possible length in $G$.  We proceed to show that $D_1$ is a distant leaf block in $G$. If $n=1$, then $G$ is a disjoint union of complete graphs, and it is easy to see that $D_1=D_n$ is a distant leaf block in $G$. Now, suppose $n\ge 2$ and $V(D_j)\cap V(D_{j+1})=\{x_j\}$ for each $j\ge 1$. Let $V(D_1)=\{x_1,y_1,\ldots,y_m\}$ for some $m\ge 1$. Our aim now is to show that $y_i$ is a free vertex in $G$ for each $i\ge 1$. On the contrary, suppose $y_1$ is not a free vertex in $G$. Then there exists some $D\in\Bl$ such that $D\neq D_j$ for each $j\in[n]$ and $V(D)\cap V(D_1)=\{y_1\}$. We proceed to show that $V(D)\cap V(D_j)=\emptyset$ for each $j>1$. If not, suppose $j>1$ is the smallest integer such that $V(D)\cap V(D_j)=\{z\}$ for some $z\in V(G)$. Note that $z\neq x_{i-1}$ since $|V(D)\cap V(D_1)|=1$, and $V(D)\cap V(D_{t})=\emptyset$ for each $t<j$. Now the vertices in $W=\{y_1,x_1,\ldots,x_{i-1},z\}$ form a (not necessarily induced) cycle of length at least $3$ in $G$ such that each edge of this cycle belongs to a different block. Thus, proceeding as in the proof of \Cref{block path existence}, we arrive at a contradiction, and hence, $V(D)\cap V(D_i)=\emptyset$ for each $i>1$. Consequently, $\P'\colon D,D_1,D_2,\ldots,D_n$ is a block path of length $n+1$ in $G$, a contradiction to the fact that $\P$ is a block path of maximum possible length in $G$. Hence, $y_1$ must be a free vertex of $G$, and consequently, $D_1$ is a leaf block.
	
	To prove $D_1$ is a distant leaf block, let us consider $B\in \N_G(D_1)$ such that $B\neq D_2$. In this case, we have $V(B)\cap V(D_1)=\{x_1\}$. Let $V(B)=\{x_1,z_1,\ldots,z_r\}$ for some $r\ge 1$. Proceeding as in the previous paragraph, we see that $V(B)\cap V(D_i)=\emptyset$ for each $i>2$. Thus, $\P''\colon B,D_2,\ldots,D_n$ is also a block path of maximum possible length in $G$. Then, from the arguments in the previous paragraph, we see that $z_r$ is a free vertex in $G$ for each $r\ge 1$. Hence, $B$ is a leaf block, and consequently, $D_1$ is a distant leaf block in $G$. 
\end{proof}

\begin{lemma}\label{more distant leaf block}
	Let $\P:D_1,D_2,\ldots,D_n$ be a block path in a block graph $G$ of the maximum possible length. Let $B\in\N_G(D_2)\setminus
	\{D_3\}$ such that $V(B)\cap V(D_2)\neq V(D_2)\cap V(D_3)$. Then, $B$ is a distant leaf block in~$G$.
\end{lemma}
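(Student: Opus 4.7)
The plan is to construct an alternative maximum-length block path $\P'$ whose first block is $B$, and then invoke the argument given in the proof of \Cref{distant leaf block existence}, which establishes that the initial block of any maximum-length block path is necessarily a distant leaf block. Concretely, I will verify that $\P' \colon B, D_2, D_3, \ldots, D_n$ is a valid block path; since it has length $n$ and $\P$ already attains the maximum possible length $n$ in $G$, so does $\P'$, and the cited argument applied to $\P'$ immediately yields that $B$ is a distant leaf block. Observe that this also painlessly handles the degenerate case $B=D_1$, in which $\P'=\P$.

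The only non-trivial step is checking the defining conditions of a block path for $\P'$, which reduces to showing that $V(B) \cap V(D_j) = \emptyset$ for every $j \geq 3$. Write $V(D_i) \cap V(D_{i+1}) = \{x_i\}$ and $V(B) \cap V(D_2) = \{v\}$; by hypothesis $v \neq x_2$. Suppose for contradiction that $j \geq 3$ is the smallest index with $V(B) \cap V(D_j) = \{z\}$. Then the sequence $v, x_2, x_3, \ldots, x_{j-1}, z$ forms a closed walk of length $j$ in $G$ whose consecutive edges $\{v,x_2\}$, $\{x_i,x_{i+1}\}$ for $2 \leq i \leq j-2$, $\{x_{j-1},z\}$, and $\{z,v\}$ lie respectively in the pairwise distinct blocks $D_2, D_3, \ldots, D_j, B$. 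A routine case analysis using the block-path conditions of $\P$, the hypothesis $v \neq x_2$, and the minimality of $j$ shows that the listed vertices are pairwise distinct, so this is genuinely a cycle of length $j \geq 3$ in $G$.

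To extract a contradiction, I will use the following structural lemma, proved by induction on cycle length: in any block graph, every cycle has all of its edges contained in a single common block. The base case $k=3$ is immediate, since a triangle must lie inside a maximal clique and maximal cliques in a block graph are blocks; for $k \geq 4$, chordality of $G$ supplies a chord that splits the cycle into two shorter cycles sharing the chord edge, and since each edge of a block graph lies in a unique block, the induction hypothesis forces the two blocks to coincide. Applying this to our cycle forces the $j$ pairwise distinct blocks $D_2, \ldots, D_j, B$ all to be equal, contradicting $B \neq D_2$. The main technical obstacle I anticipate is executing this cycle argument cleanly — in particular, handling $j=3$ directly and pushing through the vertex-distinctness verification using only the block-path hypotheses and the fact that two distinct blocks share at most one vertex.
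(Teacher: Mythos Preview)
Your proposal is correct and follows essentially the same strategy as the paper: replace $D_1$ by $B$ to form $\P':B,D_2,\ldots,D_n$, verify $V(B)\cap V(D_j)=\emptyset$ for $j\geq 3$ via a cycle-in-distinct-blocks contradiction, and then invoke the proof of \Cref{distant leaf block existence}. The only cosmetic difference is that the paper obtains the cycle contradiction by passing directly to an induced triangle (citing the argument in \Cref{block path existence}), whereas you package it as a standalone inductive structural lemma; the substance is identical.
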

\begin{proof}
	Let $V(D_i)\cap V(D_{i+1})=\{x_i\}$ for each $i\ge 2$. Let $V(B)\cap V(D_2)=\{z\}$. By the given hypothesis, we have $z\neq x_2$. Arguing as in \Cref{distant leaf block existence}, we see that $V(B)\cap V(D_i)=\emptyset$ for each $i>2$. Hence, $\P':B,D_2,\ldots,D_n$ is a block path of the maximum possible length in $G$. Therefore, by the proof of \Cref{distant leaf block existence}, we see that $B$ is a distant leaf block in $G$.
\end{proof}

Some analogs of the next two results can be obtained for any arbitrary graph, but here, we state and prove them for block graphs only.

\begin{proposition}\label{admissible result}
	Let $G$ be a block graph and $B\in\Bl$. Let $\{u_1,\ldots,u_l\}\subseteq V(B)$ for some $l\ge 1$, and $H$ denote the induced subgraph $G\setminus\{u_1,\ldots,u_l\}$. Then for each $2\le k\le \nu(G)$, we have $\aim(H,k-1)\le\aim(G,k)-1$ if one of the following holds.
	\begin{enumerate}[label=(\roman*)]
		\item For some $i\in[l]$, $|\N_G[u_i]\setminus V(B)|\ge 1$, and for each $v\in \N_G[u_i]\setminus V(B)$, $\deg(v)=1$.
		\item $l\ge 2$, and $u_1,u_2$ are two distinct free vertices in $G$. 
	\end{enumerate}
\end{proposition}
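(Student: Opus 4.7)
The plan is to start from a maximum $(k-1)$-admissible matching $M$ of $H$ with a witnessing partition $M = M_1 \sqcup \cdots \sqcup M_r$, so $|M| = \aim(H,k-1)$ and $|M| \le r + k - 2$, and to produce a $k$-admissible matching $M'$ of $G$ with $|M'| = |M| + 1$ by adjoining one new edge $e$. In case~(i) I take $e = \{u_i, v\}$, where $v \in N_G[u_i] \setminus V(B)$ is a degree-one vertex furnished by the hypothesis; in case~(ii) I take $e = \{u_1, u_2\}$, which is an edge of $G$ because $u_1, u_2 \in V(B)$ and $B$ is a clique. In either situation $e$ is disjoint from $V(M)$: the $u_j$'s are absent from $V(H)$ by construction, and in case~(i) the vertex $v$ is isolated in $H$ since its unique $G$-neighbor $u_i$ was removed.

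The key observation is a clique-forced uniqueness. Because $B$ is a clique, any two vertices of $V(M) \cap V(B)$ are adjacent in $G$; were they in different parts $M_j, M_{j'}$, the edge between them would lie in $G[V(M)]$ but in none of the induced subgraphs $G[V(M_i)]$, contradicting condition~(1) of \Cref{def:aim-d-uniform} for $M$. Hence either $V(M) \cap V(B) = \emptyset$, or there is a unique part $M_{j_0}$ with $V(M) \cap V(B) \subseteq V(M_{j_0})$. Accordingly I set
\[
M' =
\begin{cases}
M_1 \sqcup \cdots \sqcup M_r \sqcup \{e\} & \text{if } V(M) \cap V(B) = \emptyset, \\
M_1 \sqcup \cdots \sqcup (M_{j_0} \cup \{e\}) \sqcup \cdots \sqcup M_r & \text{otherwise,}
\end{cases}
\]
and, writing $r'$ for the new number of parts, the size condition is immediate: $|M'| = |M| + 1 \le r + k - 1 \le r' + k - 1$.

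The only delicate step, and the one I expect to be the main obstacle, is verifying the gap condition~(1) of \Cref{def:aim-d-uniform} for $M'$. Edges of $G[V(M')]$ already contained in $G[V(M)]$ are covered by the original partition, since $V(M) \subseteq V(H)$ forces $G[V(M)] = H[V(M)]$; so it suffices to understand the new edges incident to the vertices of $e$. In case~(i), the set $N_G(u_i) \setminus V(B)$ consists of pendants whose sole $G$-neighbor is $u_i$, so these vertices are isolated in $H$ and absent from $V(M)$; hence the $V(M)$-neighbors of $u_i$ all belong to $V(M) \cap V(B) \subseteq V(M_{j_0})$, while $v$ has no other $G$-neighbor because $\deg_G(v) = 1$. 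In case~(ii), since $u_1$ and $u_2$ are free vertices and $B$ is a maximal clique, one has $N_G(u_j) = V(B) \setminus \{u_j\}$ for $j = 1, 2$; so the $V(M)$-neighbors of $u_1$ and $u_2$ are again trapped inside $V(M) \cap V(B) \subseteq V(M_{j_0})$. Consequently every new edge of $G[V(M')]$ is contained in the enlarged part of the partition, and condition~(1) is verified. The clique uniqueness observation is the crux: without it one would need to merge several parts that meet $V(B)$, eroding the slack $|M| \le r + k - 2$ by more than one and so breaking the size bound.
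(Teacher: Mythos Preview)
Your proof is correct and follows essentially the same approach as the paper. Both arguments add a single edge $e$ (either $\{u_i,v\}$ in case~(i) or $\{u_1,u_2\}$ in case~(ii)) to a maximum $(k-1)$-admissible matching $M$ of $H$, use the clique structure of $B$ to see that at most one part $M_{j_0}$ meets $V(B)$, and then either adjoin $\{e\}$ as a new part or merge it into $M_{j_0}$. The paper states the key uniqueness fact (``since $B$ is a block, $|\{i \mid V(M_i)\cap V(B)\neq\emptyset\}|\le 1$'') and the admissibility of $M'$ without detailed justification, whereas you spell out both the uniqueness argument (via condition~(1) and the fact that $G[V(M)]=H[V(M)]$) and the verification that all new edges of $G[V(M')]$ land in the enlarged part; your treatment is thus more explicit but identical in substance.
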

\begin{proof}
	First, let us assume that condition (i) holds. Let $\N_G[u_i]\setminus V(B)=\{v_1,\ldots,v_r\}$ for some $r\ge 1$. Let $M=\sqcup_{i=1}^rM_i$ be a $(k-1)$-admissible matching of $H$ such that $|M|=\aim(H,k-1)$. Since $B$ is a block in $G$, we see that $n\coloneqq|\{i\mid V(M_i)\cap V(B)\neq\emptyset\}|\le 1$. If $n=0$, then it is easy to see that $M'=\sqcup_{i=1}^{r+1}M_i'$, where $M_i'=M_i$ for each $i\in[r]$, and $M_{r+1}'=\{\{u_i,v_1\}\}$, forms a $k$-admissible matching of $G$. Now, suppose $n=1$ and without loss of generality, let $V(M_r)\cap V(B)\neq\emptyset$. In this case, $M'=\sqcup_{i=1}^{r}M_i'$, where $M_i'=M_i$ for each $i\in[r-1]$, and $M_r'=M_r\cup\{\{u_i,v_1\}\}$, forms a $k$-admissible matching of $G$. Thus, combining both cases, we obtain $\aim(H,k-1)\le \aim(G,k)-1$.
	
	For (ii), working with the edge $\{u_1,u_2\}$ similar to $\{u_i,v_1\}$ above and proceeding accordingly we also obtain $\aim(H,k-1)\le \aim(G,k)-1$. This completes the proof.
\end{proof}

\begin{proposition}\label{admissible result1}
	Let $G$ be a block graph and $B\in\Bl$ such that $u_1,u_2\in V(B)$ are two distinct free vertices in $G$. If $H=G\setminus V(B)$, then for each $1\le k\le \nu(G)$, we have $\aim(H,k)\le \aim(G,k)-1$.
\end{proposition}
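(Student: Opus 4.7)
The plan is to produce, from any $k$-admissible matching $M$ of $H$ realizing $|M| = \aim(H, k)$ with a partition $M = M_1 \sqcup \cdots \sqcup M_r$, a $k$-admissible matching $M'$ of $G$ of size $|M| + 1$ by appending $\{u_1, u_2\}$ as a new part in the partition. Because $B$ is a clique, $\{u_1, u_2\} \in E(G)$; because $V(M) \subseteq V(H) = V(G) \setminus V(B)$, the edge $\{u_1, u_2\}$ is disjoint from $V(M)$, so $M' := M \cup \{\{u_1, u_2\}\}$ is automatically a matching of $G$.

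The key structural input is that the freeness of $u_1$ (and of $u_2$) inside $B$ forces $N_G[u_1] \subseteq V(B)$ (and $N_G[u_2] \subseteq V(B)$). Indeed, $u_1$ being free means $N_G[u_1]$ is a clique, and since $u_2 \in N_G[u_1] \cap V(B)$ this clique has size at least two. In a block graph every clique of size at least two lies in a unique block, say $B_1$; but $u_1, u_2 \in V(B) \cap V(B_1)$ and distinct blocks intersect in at most one vertex, so $B_1 = B$. The argument is symmetric for $u_2$. Consequently, neither $u_1$ nor $u_2$ has any neighbor in $V(H)$.

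I would then set $M_{r+1} := \{\{u_1, u_2\}\}$ and $M' = M_1 \sqcup \cdots \sqcup M_{r+1}$, and verify the two conditions of \Cref{def:aim-d-uniform}. For condition (1), any edge of $G[V(M')]$ is either an edge inside $V(M)$, in which case it belongs to $H[V(M)] = G[V(M)]$ and thus lies in some $G[V(M_i)]$ by the $k$-admissibility of $M$ in $H$, or it is $\{u_1, u_2\} \in G[V(M_{r+1})]$; the preceding structural observation rules out any cross edge between $V(M)$ and $\{u_1, u_2\}$. Condition (2) reduces to $|M'| = |M| + 1 \leq (r + 1) + k - 1$, which is equivalent to $|M| \leq r + k - 1$, already satisfied by $M$ in $H$. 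Thus $M'$ is $k$-admissible in $G$ of size $\aim(H, k) + 1$, yielding $\aim(G, k) \geq \aim(H, k) + 1$.

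The principal difficulty is the structural claim identifying $N_G[u_i] \subseteq V(B)$, which crucially uses the defining property of block graphs that two distinct blocks share at most one vertex. Once that is in hand, the extension argument is essentially routine.
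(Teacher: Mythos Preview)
Your proof is correct and follows exactly the same approach as the paper's: take a $k$-admissible matching $M=\sqcup_{i=1}^r M_i$ of $H$ of maximal size and append $M_{r+1}=\{\{u_1,u_2\}\}$ as a new part. The paper's argument is terse and simply asserts that the resulting $M'$ is $k$-admissible in $G$, whereas you spell out the verification, including the key observation $N_G[u_i]\subseteq V(B)$ (which follows from freeness of $u_i$ and the block-graph property) ruling out cross edges.
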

\begin{proof}
	Let $M=\sqcup_{i=1}^rM_i$ be a $k$-admissible matching of $H$ such that $|M|=\aim(H,k)$. Then $M'=\sqcup_{i=1}^{r+1}M_i'$, where $M_i'=M_i$ for each $i\in[r]$, and $M_{r+1}'=\{\{u_1,u_2\}\}$, forms a $k$-admissible matching of $G$. Thus $\aim(H,k)\le \aim(G,k)-1$.
\end{proof}

Now, we will introduce the notion of a special block in a block graph, which plays a crucial role in proving our main theorem in this section. Let $G$ be a block graph with $B\in\Bl$ and $u\in V(B)$. Define $\N_G(B,u)\coloneqq\{D\in\Bl\mid V(D)\cap V(B)=\{u\}\}$.

\begin{definition}\label{special block}
	Let $G$ be a block graph, and $B$ a block of $G$ with $V(B)=\{u_1,\ldots,u_d\}$. We say that $B$ is a \emph{special block} of $G$ if $B$ is one of the following types.
	\begin{enumerate}[leftmargin=2cm]
		\item[{\bf Type I}] $d\le 2$, and $\N_G(B,u_i)=\emptyset$ for each $i\in[d]$.
		\item[{\bf Type II}] $d\ge 3$, and $\N_G(B,u_i)=\emptyset$ for each $i\in[d-1]$.
		\item[{\bf Type III}] $d\ge 2$, and there exists some $i\in[d-1]$ such that $\N_G(B,u_i)\neq\emptyset$. Moreover, if $\N_G(B,u_i)\neq\emptyset$ for some $i\in[d-1]$, then for each $D\in \N_G(B,u_i)$, $D\cong K_2$.
	\end{enumerate}
\end{definition}

\begin{example}
    Let $G$ be a block graph as in \Cref{block G}. Then the block on the vertex set $\{x_{16},x_{17}\}$ is a special block of Type I whereas the block on the vertex set $\{x_1,x_2,x_3\}$ is a special block of Type II. On the other hand, the block on the vertex set $\{x_{10},x_{11},x_{12},x_{13}\}$ is a special block of Type III. 
\end{example}

\begin{lemma}\label{special block existence lemma}
	Let $G$ be a block graph. Then, $G$ contains at least one special block.
\end{lemma}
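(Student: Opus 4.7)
My plan is to run a structural analysis on a longest block path in $G$. Since $\N_G(B,u)$ depends only on the blocks of the component containing $B$, any special block of a component is a special block of $G$, so I may assume $G$ is connected. If $G$ consists of a single block $B$, then $\N_G(B,u)=\emptyset$ for every $u\in V(B)$, which makes $B$ a Type I block when $|V(B)|\le 2$ and a Type II block when $|V(B)|\ge 3$.

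Now assume $G$ has at least two blocks, and let $\P\colon D_1,\ldots,D_n$ (with $n\ge 2$) be a block path of maximum possible length; write $V(D_i)\cap V(D_{i+1})=\{x_i\}$. By the proof of \Cref{distant leaf block existence}, $D_1$ is a distant leaf block with $x_1$ its unique cut vertex, and every vertex of $V(D_1)\setminus\{x_1\}$ is free in $G$. If $|V(D_1)|\ge 3$, then labeling $u_d=x_1$ and the free vertices as $u_1,\ldots,u_{d-1}$ shows that $D_1$ is of Type II. Otherwise $D_1=\{x_1,y_1\}$ is a pendant $K_2$, and I examine $\N_G(D_1,x_1)$. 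Because $D_1$ is a distant leaf block, at most one member of $\N_G(D_1,x_1)$ is not a leaf block. For any leaf block $B\in \N_G(D_1,x_1)$, the vertex $x_1$ is the unique cut vertex of $B$, so $|V(B)|\ge 3$ would already make $B$ a Type II block by the same reasoning; hence I may assume that every leaf block in $\N_G(D_1,x_1)$ is a $K_2$. If moreover there is no non-leaf block in $\N_G(D_1,x_1)$, then all of its elements are $K_2$, and labeling $u_1=x_1$, $u_2=y_1$ makes $D_1$ itself a Type III block.

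The remaining case is that $\N_G(D_1,x_1)$ contains a unique non-leaf block $\widetilde D$. Since $\widetilde D$ has a cut vertex $z\neq x_1$, the block path $D_1,\widetilde D$ extends to a block path of length at least $3$ in $G$; by replacing $\P$ with such a maximal extension, I may assume $D_2=\widetilde D$ and $n\ge 3$. The cut vertices of $D_2$ are $x_1$, $x_2$, and possibly further vertices $w_j\in V(D_2)\setminus\{x_1,x_2\}$. Any block in $\N_G(D_2,x_1)$ is either $D_1$ itself or lies in $\N_G(D_1,x_1)\setminus\{D_2\}$, so by the previous paragraph it is a $K_2$. By \Cref{more distant leaf block}, every block in $\N_G(D_2,w_j)\setminus\{D_2\}$ is a distant leaf block; if any such block has at least $3$ vertices, it is a Type II block by the earlier reasoning and I am done, so I may assume each of them is a $K_2$. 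Labeling $u_d=x_2$, $u_1=x_1$, and the remaining vertices of $V(D_2)$ as $u_2,\ldots,u_{d-1}$ then verifies conditions (a) and (b) of Type III for $D_2$: $\N_G(D_2,u_1)\supseteq\{D_1\}$ is nonempty and consists entirely of $K_2$'s, and the only cut vertex of $D_2$ that may admit a non-$K_2$ neighbor is $x_2=u_d$. The technical heart of the proof is this final case, where one must step one block further along the longest block path and invoke \Cref{more distant leaf block} to quarantine all potentially bad neighbors of $D_2$ at the single cut vertex $x_2=u_d$.
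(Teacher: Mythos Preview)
Your proof is correct and follows essentially the same strategy as the paper: pick a longest block path, argue that $D_1$ is a distant leaf (hence Type II if $|V(D_1)|\ge 3$), and in the remaining case show $D_2$ is Type III by checking that every block neighbor of $D_2$ at a cut vertex other than $x_2$ is a $K_2$, invoking \Cref{more distant leaf block} for this. The only organizational difference is that the paper packages the argument via the set $\Sigma$ of all starting blocks of longest block paths and reduces in one stroke to the assumption that every member of $\Sigma$ is a $K_2$, whereas you dispose of each potential $\ge 3$-vertex leaf block by hand; also note that your ``replace $\P$ with a maximal extension'' step is in fact unnecessary, since any longest block path starting at $D_1$ already has $D_2=\widetilde D$ (the unique non-leaf neighbor) and $n\ge 3$.
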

\begin{proof}
	First, consider the case when $G$ contains an isolated vertex $\{u\}$. In this case, $B=G[\{u\}]$ is a special block of Type I in $G$. Next, suppose $G$ has no isolated vertices. Let
	\[
	\Sigma=\{B\in \Bl\mid \P:B=D_1,D_2,\ldots,D_n\text{ is a block path of maximum possible length in }G\}.
	\]
	Then, by the proof of \cref{distant leaf block existence}, we see that $B$ is a distant leaf block of $G$. Observe that for each $B\in\Sigma$, $|V(B)|\ge 2$. If $|V(B)|\ge 3$, for some $B\in\Sigma$, then $B$ is a special block of Type II in ~$G$.
	
	Next, suppose for each $B\in\Sigma$, we have $|V(B)|=2$. Fix some $B\in\Sigma$ and let $\P:B=D_1,D_2,\ldots,D_n$ be a block path of the maximum possible length in $G$. If $n=1$, then $G$ is a disjoint union of edges, and thus $B$ is a special block of Type I in $G$.
	
	Now consider the case when $n\ge 2$, i.e., $\P$ is a block path of length at least $2$. Let $V(D_2)=\{u_1,\ldots,u_d\}$, where $V(D_1)\cap V(D_2)=\{u_1\}$ and $V(D_2)\cap V(D_3)=\{u_d\}$. Here, we remark that the value of $n$ can also be $2$, and in that case, no such $D_3$ exists. Without loss of generality, let $\N_G(D_2)=\{\E_{i,j}\mid i\in[d],j\le r_i\}$, where $r_i$ is a non-negative integer for each $i\in[d]$, $V(\E_{i,j})\cap V(D_2)=\{u_i\}$, $\E_{1,1}=D_1$, and $\E_{d,1}=D_3$. If $r_i=0$ for some $i\in[d]$, then we make the convention that $u_i$ is a free vertex in $G$ so that no such $\E_{i,j}$ exists. Thus, we always have $r_1\ge 1$. Since $\P$ is a block path of maximum length in $G$, by \Cref{more distant leaf block}, we have that $\E_{i,j}$ is a distant leaf block in $G$ for each $i\in[d-1]$ and $j\in[r_i]$. Moreover, $\E_{i,j}\in\Sigma$ for such $i$ and $j$. Then by our assumption $|V(\E_{i,j})|=2$ for each $i\in[d-1]$ and $j\in[r_i]$, i.e., $\E_{i,j}\cong K_2$. Consequently, $D_2$ is a special block of Type III in $G$, and this completes the proof.
\end{proof}

\begin{notation}\label{extra ideal}
	Let $B$ be a special block in a block graph $G$. Let $V(B)=\{u_1,\ldots,u_d\}$, and for each $i\in[d-1]$, either $\N_G(B,u_i)=\emptyset$ or if for some $i\in[d-1]$, $\N_G(B,u_i)\neq\emptyset$, then for each $D\in \N_G(B,u_i)$, we have $D\cong K_2$. Define
	\[
	\Lambda\coloneqq\cup_{i=1}^{d-1}\N_G(B,u_i).
	\]
	Then $\Lambda$ is a collection of blocks in $G$ which are isomorphic to $K_2$. Now, for each $S\subseteq\Lambda$, we define the ideal
	\[
	I_{S,B}=\l xy\mid x,y\in V(D) \text{ for some }D\in S\r.
	\]
\end{notation}

\begin{theorem}\label{block ordinary upper bound}
	Let $G$ be a block graph on $n$ vertices. Then, for each $1\le k\le \nu(G)$,
	\begin{enumerate}[label=(\roman*)]
		\item $\reg(I(G)^{[k]})\le \aim(G,k)+k$.
		\item $\reg(I(G)^{[k]}+I_{S,B})\le\aim(G,k)+k$, where $B$ is a special block in $G$, and $I_{S,B}$ is the ideal associated to any $S\subseteq \Lambda$ as described in \Cref{extra ideal}.
	\end{enumerate}
	Consequently, for a block graph $G$, we have $\reg(I(G)^{[k]})=\aim(G,k)+k$.
\end{theorem}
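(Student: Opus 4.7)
The lower bound $\reg(I(G)^{[k]}) \ge \aim(G,k)+k$ is already supplied by \Cref{thm: ordinary lower bound d uniform}, so only the upper bounds (i) and (ii) require proof. I would prove (i) and (ii) simultaneously by induction on $|V(G)|+k$. The base case is $k=1$: statement (i) is the classical regularity formula $\reg(I(G)) = \nu_1(G)+1 = \aim(G,1)+1$ for chordal graphs, and (ii) collapses to (i) because the generators of $I_{S,B}$ are edges of $G$, so $I_{S,B} \subseteq I(G) = I(G)^{[1]}$.

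For the inductive step, apply \Cref{special block existence lemma} to extract a special block $B$ of $G$ with $V(B) = \{u_1,\ldots,u_d\}$, and split into the three types provided by \Cref{special block}. In the Type I case ($d \le 2$, so $B$ is an isolated vertex or an isolated edge, hence a connected component of $G$), \Cref{lem:square-free-power-splittings} yields a Betti splitting of $I(G)^{[k]}$ into pieces involving $G' = G \setminus V(B)$, giving
\[
\reg(I(G)^{[k]}) \le \max\{\reg(I(G')^{[k]}),\; \reg(I(G')^{[k-1]})+1\},
\]
and the induction hypothesis combined with \Cref{admissible result} (ii) and \Cref{admissible result1} bounds both terms by $\aim(G,k)+k$.

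In the Type II case ($d \ge 3$ with $u_1,\ldots,u_{d-1}$ free in $G$), I would select free vertices $u_1,u_2 \in V(B)$ and apply \Cref{regularity lemma} (iii) to $I(G)^{[k]}$ with the monomial $m=u_1u_2$. Because $u_1,u_2$ are free, any matching edge incident to $u_1$ or $u_2$ lies entirely in $B$, so that $I(G)^{[k]}\colon u_1u_2$ identifies with (a sum involving) a square-free power of the edge ideal of a smaller block graph, and $I(G)^{[k]}+(u_1u_2)$ is controlled by an induced sub-block-graph on $V(G)\setminus\{u_1\}$. In the Type III case, pick a $K_2$-block $D \in \N_G(B,u_i)$ for some $i < d$ with $V(D) = \{u_i,v\}$, so that $v$ is a leaf, and apply \Cref{regularity lemma} (iii) with $m=v$: the sum $I(G)^{[k]}+(v)$ restricts to the edge ideal of a smaller block graph, while the colon $I(G)^{[k]}\colon v$ naturally becomes an ideal of the form $I(G'')^{[k-1]} + I_{S',B}$ for a suitable $S' \subseteq \Lambda$, which is precisely why the auxiliary statement (ii) must be carried along through the induction. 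The decrements of $\aim$ under such deletions are controlled by \Cref{admissible result} and \Cref{admissible result1}.

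The main obstacle is the Type III analysis: the auxiliary ideal $I_{S,B}$ in (ii) has to be chosen so that both the colon and the sum operations produce ideals that lie within the range of the induction hypothesis, and the combinatorial decrements of $\aim$ coming from \Cref{admissible result} and \Cref{admissible result1} must exactly absorb the $+1$ (or $+2$) regularity shift introduced by \Cref{regularity lemma} (iii). Once (i) and (ii) are established for all $1 \le k \le \nu(G)$, the equality $\reg(I(G)^{[k]}) = \aim(G,k)+k$ follows immediately by combining (i) with \Cref{thm: ordinary lower bound d uniform}.
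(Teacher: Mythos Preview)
Your global architecture---simultaneous induction on (i) and (ii), reduction to a special block via \Cref{special block existence lemma}, and the three-way case split---matches the paper, and you correctly identify that (ii) is carried along precisely because the inductive step manufactures ideals of the shape $I(G')^{[k']}+I_{S',B'}$.

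The substantive gap is in Type III. With $m=v$ a leaf, the colon does \emph{not} take the form you claim: since $N_G(v)=\{u_i\}$, one checks that
\[
I(G)^{[k]}:v \;=\; I(G\setminus v)^{[k]} \;+\; u_i\,I(G\setminus\{u_i,v\})^{[k-1]},
\]
a mixture of a $k$-th and a $(k-1)$-th square-free power that fits neither (i) nor (ii). The paper takes $m=u_iv$ (the pendant \emph{edge}) instead: then $\bigl(I(G)^{[k]}+I_{S,B}\bigr):u_iv$ is $I(G\setminus u_i)^{[k-1]}$ plus an $I_{S',B'}$-term for the smaller block graph $G\setminus u_i$ (handled by (ii) on fewer vertices), while it is the \emph{sum} $I(G)^{[k]}+I_{S,B}+\langle u_iv\rangle=I(G)^{[k]}+I_{S\cup\{D\},B}$ that enlarges $S$. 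Thus (ii) is proved by a descending induction on $|\Lambda|-|S|$, whose terminal case $S=\Lambda$ requires its own argument (the paper's Claim~2, itself an iterated application of \Cref{regularity lemma} through $u_1u_2,\dots,u_1u_d$ and finally $u_1$); your outline does not supply this step. An analogous but milder issue appears in Type II: the colon by $u_1u_2$ is indeed $I(G\setminus\{u_1,u_2\})^{[k-1]}$ since both vertices are free, but $I(G)^{[k]}+\langle u_1u_2\rangle$ is not of the form (i) or (ii) for any smaller block graph, so one must iterate through $u_1u_3,\dots,u_1u_d,u_1$ as the paper does. Finally, your Type~I Betti-splitting bound is off by one in each term---the correct reading is $\max\{\reg I(G')^{[k-1]}+2,\ \reg I(G')^{[k]}+1\}$---though with this correction the argument still closes via \Cref{admissible result}(ii) and \Cref{admissible result1}.
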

\begin{proof}
	We prove (1) and (2) simultaneously by induction on $n$. By \Cref{special block existence lemma}, $G$ contains at least one special block. Let $B$ be a special block in $G$, and $I_{S,B}$ is the ideal defined as in \Cref{extra ideal}, where $S\subseteq \Lambda$. Now, if $n\le 3$, then $k=1$, and thus it easy to see that 
	\[
	\reg(I(G)^{[1]}+I_{S,B})=\reg(I(G)^{[1]})=\nu(G)+1=\aim(G,1)+1.
	\]
	Therefore, we may assume that $n\ge 4$. For such an $n$, the case $k=1$ follows from \cite[Theorem 6.8]{HaVanTuyl2008} since $I(G)^{[1]}+I_{S,B}=I(G)^{[1]}$ and $\nu(G)=\aim(G,1)$. Therefore, we can assume $k\ge 2$.
	
	\noindent
	\textbf{Case I:} $B$ is a special block of Type I in $G$. In this case, $B$ is a connected component of $G$, and either $B$ is an isolated vertex in $G$ or $B\cong K_2$. In both cases, we see that $\Lambda=\emptyset$, and $I_{S,B}=\l 0\r$. Now if $B$ is an isolated vertex of $G$, then by the induction hypothesis and by \Cref{admissible result2}, we have
	\begin{align*}
		\reg(I(G)^{[k]})&=\reg(I(G\setminus V(B))^{[k]})\\
		&\le \aim(G\setminus V(B),k)+k\\
		&\le\aim(G,k)+k.
	\end{align*}
	Now, suppose $B\cong K_2$ and $V(B)=\{u,v\}$. Then,
	\begin{equation}\label{eq21}
		\begin{split}
			\reg((I(G)^{[k]}+\l uv\r):u)&=\reg(I(G\setminus u)^{[k]}+\l v\r)\\
			&\le \aim(G\setminus u,k)+k\\
			&\le\aim(G,k)+k-1,
		\end{split}
	\end{equation}
	where the equality follows from \cite[Lemma 2.9]{DRS20242}, and the first and second inequalities follow from the induction hypothesis and \Cref{admissible result2}, respectively. Next,
	\begin{equation}\label{eq22}
		\begin{split}
			\reg(I(G)^{[k]}+\l uv\r+\l u\r)&=\reg((I(G\setminus u)^{[k]}+\l u\r)\\
			&\le\aim(G\setminus u,k)+k\\
			&\le \aim(G,k)+k
		\end{split}
	\end{equation}
	where, as before, the equality follows from \cite[Lemma 2.9]{DRS20242}, and the first and second inequalities follow from the induction hypothesis and \Cref{admissible result2}, respectively. Now using inequalities (\ref{eq21}), (\ref{eq22}), and \Cref{regularity lemma} together, we obtain
	\begin{align}\label{eq23}
		\reg(I(G)^{[k]}+\l uv\r)\le\aim(G,k)+k.
	\end{align}
	Next, by \cite[Lemma 2.9]{DRS20242}, $(I(G)^{[k]}:uv)=I(G\setminus\{u,v\})^{[k-1]}$. Observe that both $u$ and $v$ are free vertices of $G$. Therefore, using the induction hypothesis, and \Cref{admissible result}, we obtain 
	\begin{equation}\label{eq24}
		\begin{split}
			\reg(I(G)^{[k]}:uv)&=\reg(I(G\setminus\{u,v\})^{[k-1]})\\
			&\le\aim(G\setminus\{u,v\},k-1)+k-1\\
			&\le\aim(G,k)+k-2.
		\end{split}
	\end{equation}
	Combining inequalities (\ref{eq23}) and (\ref{eq24}), and using \Cref{regularity lemma}, we get $\reg(I(G)^{[k]})\le\aim(G,k)+k$. This completes the proof of (ii) as well as (i).   
	
	\noindent
	\textbf{Case II:} $B$ is a special block of Type II in $G$. Hence, $|V(B)|\ge 3$. As in Case I, in this case too, we have $\Lambda=\emptyset$, and $I_{S,B}=\l 0\r$. Let $V(B)=\{u_1,\ldots,u_d\}$, where $d\ge 3$, and $u_1,\ldots,u_{d-1}$ are free vertices in $G$. 
	
	\noindent
	\textbf{Claim 1}: For each $j\in[d-1]$, 
	\[
	\reg((I(G)^{[k]}+\l u_1u_2,\ldots,u_1u_j\r):u_1u_{j+1})\le \aim(G,k)+k-2,
	\]
	where $(I(G)^{[k]}+\l u_1u_2,\ldots,u_1u_j\r=I(G)^{[k]}$ in case $j=1$.
	
	\noindent
	\textit{Proof of Claim 1}: Let $G_j=G\setminus\{u_1,\ldots,u_{j+1}\}$. Then, 
	\begin{align*}
		\reg((I(G)^{[k]}+\l u_1u_2,\ldots,u_1u_j\r):u_1u_{j+1})&=\reg(I(G_j)^{[k-1]}+\l u_2,\ldots,u_j\r)\\
		&\le \aim(G_j,k-1)+k-1\\
		&\le \aim(G,k)+k-2,
	\end{align*}
	where the first equality follows from \cite[Lemma 2.9]{DRS20242}, and the first and second inequalities follow from the induction hypothesis and \Cref{admissible result}, respectively. This completes the proof of ~Claim~1.
	
	Let $H=G\setminus N_G[u_1]$. Then,
	\begin{equation}\label{eq27}
		\begin{split}
			\reg((I(G)^{[k]}+\l u_1u_2,\ldots,u_1u_d\r):u_1)&=\reg(I(H)^{[k]}+\l u_2,\ldots,u_d\r)\\
			&\le \aim(H,k)+k\\
			&\le \aim(G,k)+k-1,
		\end{split}
	\end{equation}
	where the first equality again follows from \cite[Lemma 2.9]{DRS20242}, and the first and second inequalities follow from the induction hypothesis and \Cref{admissible result1}, respectively. Next, we have
	\begin{equation}\label{eq28}
		\begin{split}
			\reg(I(G)^{[k]}+\l u_1u_2,\ldots,u_1u_d,u_1\r)&=\reg(I(G\setminus u_1)^{[k]})\\
			&\le\aim(G\setminus u_1,k)+k\\
			&\le \aim(G,k)+k,
		\end{split}
	\end{equation}
	where the first and second inequalities follow from the induction hypothesis, and \Cref{admissible result2},~respectively.
	
	Now, by \Cref{regularity lemma} along with inequalities~(\ref{eq27}) and (\ref{eq28}), we get $\reg(I(G)^{[k]}+\l u_1u_2,\ldots,u_1u_d\r)\le\aim(G,k)+k$. Observe that from Claim 1, we have $\reg((I(G)^{[k]}+\l u_1u_2,\ldots,u_1u_{d-1}\r):u_1u_d)\le\aim(G,k)+k-2$, and hence using \Cref{regularity lemma} again we obtain $\reg(I(G)^{[k]}+\l u_1u_2,\ldots,u_1u_{d-1}\r)\le\aim(G,k)+k$. By a repeated use of Claim 1 and \Cref{regularity lemma}, we finally get $\reg(I(G)^{[k]})\le\aim(G,k)+k$, as desired.
	
	\noindent
	\textbf{Case III:} $B$ is a special block of Type III in $G$. Let $V(B)=\{u_1,\ldots,u_d\}$, and without loss of generality, let $\N_G(B,u_1)\neq\emptyset$. Now, for each $i\in[d-1]$, let $\N_G(B,u_i)=\{\E_{i,j}\mid j\le r_i\}$, where $r_i$ is a non-negative integer. As mentioned in the proof of \Cref{special block existence lemma}, if $r_i=0$ for some $i\in[d]$, then we make the convention that $u_i$ is a free vertex in $G$. Thus, we always have $r_1\ge 1$. By our assumption, we also have $\E_{i,j}\cong K_2$. If such an $\E_{i,j}$ exists, let $V(\E_{i,j})=\{u_i,v_{i,j}\}$ for each $i\in[d-1]$ and $j\in [r_i]$. In this case, our first claim is the following.
	
	\noindent
	\textbf{Claim 2:} $\reg(I(G)^{[k]}+I_{\Lambda,B})\le\aim(G,k)+k$.
	
	\noindent
	\textit{Proof of Claim 2}: By \cite[Lemma 2.9]{DRS20242}, for each $p\in[d-1]$, we have
	\begin{equation}\label{eq29}
		\begin{split}
			&((I(G)^{[k]}+ I_{\Lambda,B}+\l u_1u_2,\ldots, u_1u_p\r):u_1u_{p+1})\\
			&=I(G\setminus \{u_1,\ldots,u_p,u_{p+1}\})^{[k-1]}+\l u_iv_{i,j}\mid p+2\le i\le d-1\r\\
			&\hspace{4cm}+\l v_{1,j},v_{p+1,l}\mid j\in [r_1],p+1\le d-1, l\in[r_{p+1}]\r.
		\end{split}
	\end{equation}
	Here we make the convention that $I(G)^{[k]}+I_{\Lambda,B}+\l u_1u_2,\ldots, u_1u_p\r=I(G)^{[k]}+I_{\Lambda,B}$, in case $p=1$. Note that, in \Cref{eq29}, we have $\l u_iv_{i,j}\mid p+2\le i\le d-1\r=\l 0\r$ if $p\in\{d-2,d-1\}$. Moreover, if $p=d-3$, and $r_{d-1}=0$, then we again have $\l u_iv_{i,j}\mid p+2\le i\le d-1\r=\l 0\r$. In all other cases, let $B'=B\setminus\{u_1,\ldots,u_{p+1}\}$. Then it is easy to see that $B'$ is a special block in $G\setminus \{u_1,\ldots,u_{p+1}\}$. Moreover, $I_{\Lambda',B'}=\l u_iv_{i,j}\mid p+2\le i\le d-1\r$, where $\Lambda'=\cup_{i=p+2}^{d-1}\N_{G\setminus \{u_1,\ldots,u_p,u_{p+1}\}}(B',u_i)$. Now, by the induction hypothesis and using \Cref{admissible result}, we obtain
	\begin{equation}\label{eq4}
		\begin{split}
			\reg((I(G)^{[k]}+I_{\Lambda,B}+\l u_1u_2,\ldots, u_1u_p\r):u_1u_{p+1})&\le \aim(G\setminus \{u_1,\ldots,u_{p+1}\},k-1)+k-1\\
			&\le \aim(G,k)+k-2.
		\end{split}
	\end{equation}
	In particular,
	\begin{align}\label{eq3}
		\reg((I(G)^{[k]}+I_{\Lambda,B}+\l u_1u_2,\ldots, u_1u_{d-1}\r):u_1u_{d})\le \aim(G,k)+k-2.
	\end{align}
	Next, we have 
	\begin{equation}\label{eq1}
		\begin{split}
			\reg((I(G)^{[k]}+I_{\Lambda,B}+\l u_1u_2,\ldots, u_1u_{d}\r):u_1)&=\reg(I(G\setminus \{u_1,\ldots,u_d\})^{[k]})\\
			&\le \aim(G\setminus \{u_1,\ldots,u_d\},k)+k\\
			&\le \aim(G,k)+k-1,
		\end{split}
	\end{equation}
	where the equality follows from \cite[Lemma 2.9]{DRS20242}, and the first and second inequalities follow from the induction hypothesis and \Cref{admissible result1}, respectively. Moreover, $I(G)^{[k]}+I_{\Lambda,B}+\l u_1u_2,\ldots, u_1u_{d}\r+\l u_1\r=I(G\setminus u_1)^{[k]}+\l u_iv_{i,j}\mid 2\le i\le d-1, j\in[r_i]\r$. As before, the ideal $\l u_iv_{i,j}\mid 2\le i\le d-1, j\in[r_i]\r=\l 0\r$ if $d=2$, or $d=3$ with $r_2=0$. In all other cases, let $B''=B\setminus u_1$. Then, it is easy to see that $B''$ is a special block in $G\setminus u_1$. Moreover, $I_{\Lambda'',B''}=\l u_iv_{i,j}\mid 2\le i\le d-1, j\in[r_i]\r$, where $\Lambda''=\cup_{i=2}^{d-1}\N_{G\setminus u_1}(B'',u_i)$. Therefore, by the induction hypothesis and by \Cref{admissible result2}, we get
	\begin{equation}\label{eq2}
		\begin{split}
			\reg(I(G)^{[k]}+I_{\Lambda,B}+\l u_1u_2,\ldots, u_1u_{d}\r+\l u_1\r)&\le \aim(G\setminus u_1,k)+k\\
			&\le \aim(G,k)+k.
		\end{split}
	\end{equation}
	In view of \Cref{regularity lemma}, inequalities~(\ref{eq1}) and (\ref{eq2}) give
	\begin{align*}
		\reg(I(G)^{[k]}+I_{\Lambda,B}+\l u_1u_2,\ldots, u_1u_{d}\r\r)\le \aim(G,k)+k.
	\end{align*}
	On the other hand, using inequality (\ref{eq3}) and \Cref{regularity lemma} again, we obtain
	\[
	\reg(I(G)^{[k]}+I_{\Lambda,B}+\l u_1u_2,\ldots, u_1u_{d-1}\r\r)\le \aim(G,k)+k.
	\]
	Thus, by a repeated use of inequality (\ref{eq4}) and \Cref{regularity lemma}, we have $\reg(I(G)^{[k]}+I_{\Lambda, B})\le \aim(G,k)+k$, and this completes the proof of Claim 2.
	
	\noindent
	\textbf{Claim 3}: $
	\reg(I(G)^{[k]}+ I_{S,B})\le \aim(G,k)+k
	$, where $S\subseteq \Lambda=\{u_iv_{i,j}\mid i\in[d-1]\text{ and }j\in [r_i]\}$.
	
	\noindent
	\textit{Proof of Claim 3}: We induct on $|\Lambda|-|S|$. When $S=\Lambda$, it follows from Claim 2. Let $|S|<|\Lambda|$, and choose some $u_lv_{l,t}\in \Lambda\setminus S$. Then $l\in [d-1]$, and $t\in[r_l]$. Let $S_1=S\cup \E_{l,t}$. By the induction hypothesis, we have 
	\begin{equation}\label{eq10}
		\begin{split}
			\reg(I(G)^{[k]}+I_{S,B}+\l u_lv_{l,t}\r)&=\reg(I(G)^{[k]}+I_{S_1,B})\\
			&\le \aim(G,k)+k.
		\end{split}
	\end{equation}
	
	Now, by \cite[Lemma 2.9]{DRS20242}, $(I(G)^{[k]}+I_{S,B}): u_lv_{l,t})=I(\Gamma)^{[k-1]}+\l u_iv_{i,j}\mid \E_{i,j}\in S\text{ with }i\in[d-1]\setminus\{l\}\text{ and }j\in[r_i]\r+\l v_{l,q}\mid u_lv_{l,q}\in I_{S,B}\text{ and }q\neq t\r$, where $\Gamma=G\setminus \{u_l\}$. Again, observe that if $d=2$, or $d=3$ with $\E_{i,j}\notin S$ for each $i\in[d-1]\setminus\{l\}$ and $j\in [r_i]$, then the ideal $\l u_iv_{i,j}\mid \E_{i,j}\in S\text{ with }i\in[d-1]\setminus\{l\}\text{ and }j\in[r_i]\r=\l 0\r$. In all other cases, let $B'''=B\setminus u_l$. Then we see that $B'''$ is a special block in $G\setminus u_l$. Moreover, $I_{S_1,B'''}=\l u_iv_{i,j}\mid \E_{i,j}\in S\text{ with }i\in[d-1]\setminus\{l\}\text{ and }j\in[r_i]\r$, where $S_1=S\cap (\cup_{\underset{i\neq l}{i=1}}^{d-1}\N_{G\setminus u_l}(B''',u_i))$. Thus, by the induction hypothesis and by \Cref{admissible result}, we have
	\begin{align*}
		\reg((I(G)^{[k]}+I_{S,B}): u_lv_{l,t})&\le \aim(\Gamma,k-1)+k-1\\
		&\le \aim(G,k)+k-2.
	\end{align*}
	Therefore, using inequality~(\ref{eq10}) and \Cref{regularity lemma} we obtain
	\begin{align}\label{eq11}
		\reg((I(G)^{[k]}+I_{S,B})\le\aim(G,k)+k.
	\end{align}
	This completes the proof of (ii). Now taking $S=\emptyset$ in inequality~(\ref{eq11}), we  obtain $\reg((I(G)^{[k]})\le \aim(G,k)+k$, and this proves (i).
	
	Finally, the last statement follows immediately from (i) and \Cref{thm: ordinary lower bound d uniform}. This completes the proof of the theorem.
\end{proof}

\section{Second square-free power of Cohen-Macaulay chordal graphs}\label{sec: 5}
   
   In this section, we consider the second square-free power of the edge ideal of a Cohen-Macaulay chordal graph $G$ and show that $\mathrm{reg}(I(G)^{[2]})=\mathrm{aim}(G,2)+2$ whenever $\nu(G)\ge 2$. Recall that Herzog, Hibi, and Zheng combinatorially classified the Cohen-Macaulay property of the edge ideal of a chordal graph in \cite{CMchordal} and called such a graph Cohen-Macaulay chordal, which we describe below.

   \begin{definition}
        A graph $G$ is said to be a \emph{Cohen-Macaulay chordal} graph if $G$ is chordal and $V(G)$ can be partitioned into $W_1,\ldots,W_m$ such that $G[W_i]$ is a maximal clique containing a free vertex for each $i\in[m]$.
   \end{definition}
   
   Although we are computing the regularity of only the second square-free power of Cohen-Macaulay chordal graphs, the proof is not straightforward. To establish our main result and to make it convenient for the readers, we first prove some auxiliary results. Recall from \cite[Lemma~2.9]{DRS20242} that for any graph $G$ and any $\{x,y\}\in E(G)$, the ideal $(I(G)^{[2]}:xy)$ is an edge ideal of a graph. In this section, we always write,
   \[
   (I(G)^{[2]}:xy)=I(\widetilde{G}),
   \] 
    where $\widetilde{G}$ is this new graph constructed from $G$ with
    \begin{align*}
        V(\widetilde{G})&=V(G)\setminus\{x,y\}\text{ and } \\ E(\widetilde{G})&=E(G\setminus\{x,y\})\cup\{\{u,v\}\mid u\in N_G(x)\setminus\{y\},v\in N_G(y)\setminus\{x\},\text{ and }u\neq v\}.
    \end{align*}

\begin{wrapfigure}{R}{0.25\textwidth}
        \begin{tikzpicture}[scale=0.75]
                \draw [fill] (0,0) circle [radius=0.1];
                \draw [fill] (2,0) circle [radius=0.1];
                \draw [fill] (4,0) circle [radius=0.1];
                \draw [fill] (0,2) circle [radius=0.1];
                \draw [fill] (2,2) circle [radius=0.1];
                \draw [fill] (4,2) circle [radius=0.1];
                \node at (0,-0.5) {$y$};
                \node at (2,-0.5) {$w_4$};
                \node at (4,-0.5) {$w_3$};
                \node at (0,2.5) {$x$};
                \node at (2,2.5) {$w_1$};
                \node at (4,2.5) {$w_2$};                           
                \draw (0,0)--(2,0)--(4,0)--(4,2)--(2,2)--(0,2)--(0,0);
                \draw[dashed, red] (2,0)--(2,2);
                \draw [thick,blue] (0,0)--(4,2);
                \draw [thick,blue] (0,2)--(4,0);
        \end{tikzpicture}
    \end{wrapfigure}
Our first aim is to show that whenever $G$ is a chordal graph and $\{x,y\}\in E(G)$, the graph $\widetilde{G}$ is weakly chordal. To prove this, we need the following lemma.

\begin{lemma}\label{edge lemma}
	Let $G$ be a chordal graph with $\{x,y\}\in E(G)$ and let $(I(G)^{[2]}:xy)=I(\widetilde{G})$. Assume that $\widetilde{G}[\{w_1,w_2,w_3,w_4\}]\cong C_4$ with edges $\{w_1,w_2\}$, $\{w_2,w_3\}$, $\{w_3,w_4\}$, and $\{w_1,w_4\}$. If $\{x,w_1\},\{y,w_4\}\in E(G)$, and $\{w_1,w_4\}\in E(\widetilde{G})\setminus E(G)$, then $\{x,w_3\},\{y,w_2\}\in E(G)$.
\end{lemma}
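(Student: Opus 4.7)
The plan is to combine the explicit description of $E(\widetilde G)$ with chordality of $G$ to rule out many edges/non-edges, reducing the lemma to a short case analysis in which forbidden configurations produce induced $4$- or $5$-cycles in $G$. I would carry this out in three steps: preliminary eliminations, a contradiction argument for $\{x,w_3\}\in E(G)$, and a symmetry argument for $\{y,w_2\}\in E(G)$.

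\textbf{Preliminary observations.} Since $\widetilde G[\{w_1,w_2,w_3,w_4\}]\cong C_4$ with the given edges, the pairs $\{w_1,w_3\}$ and $\{w_2,w_4\}$ are non-edges of $\widetilde G$, and hence also of $G$ because $E(G\setminus\{x,y\})\subseteq E(\widetilde G)$. Next, if $\{x,w_2\}\in E(G)$ then $w_2\in N_G(x)\setminus\{y\}$ and $w_4\in N_G(y)\setminus\{x\}$, so $\{w_2,w_4\}$ would be added to $\widetilde G$ by construction, a contradiction. Hence $\{x,w_2\}\notin E(G)$, and by the symmetric argument using $w_1\in N_G(x)$ the edge $\{y,w_3\}$ also lies outside $E(G)$.

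\textbf{Proving $\{x,w_3\}\in E(G)$.} Suppose for contradiction that $\{x,w_3\}\notin E(G)$. If $\{w_3,w_4\}$ were an added edge of $\widetilde G$, then since $\{y,w_3\}\notin E(G)$ the construction would force $w_3\in N_G(x)$, i.e., $\{x,w_3\}\in E(G)$, contradicting the assumption; so $\{w_3,w_4\}\in E(G)$, and by the same token (using $\{x,w_2\}\notin E(G)$) also $\{w_2,w_3\}\in E(G)$. Now I would split on whether $\{w_1,w_2\}\in E(G)$. If $\{w_1,w_2\}\notin E(G)$, then this edge of $\widetilde G$ is added, and since $\{x,w_2\}\notin E(G)$ the only possibility is $\{y,w_2\}\in E(G)$; then $y{-}w_2{-}w_3{-}w_4{-}y$ is a $4$-cycle in $G$ whose only diagonals $\{y,w_3\},\{w_2,w_4\}$ have been excluded, contradicting chordality. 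If instead $\{w_1,w_2\}\in E(G)$, then $G$ contains the $6$-cycle $C\colon x{-}w_1{-}w_2{-}w_3{-}w_4{-}y{-}x$, whose only candidate chords left are $\{x,w_4\},\{y,w_1\},\{y,w_2\}$; for each choice I would exhibit an induced $4$- or $5$-cycle with all diagonals forbidden — for example, $\{x,w_4\}\in E(G)$ yields the induced $5$-cycle $x{-}w_1{-}w_2{-}w_3{-}w_4{-}x$, while either of $\{y,w_1\},\{y,w_2\}\in E(G)$ reduces to the $4$-cycle case above.

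\textbf{Symmetry and main obstacle.} This proves $\{x,w_3\}\in E(G)$. Since the hypotheses are invariant under the involution $x\leftrightarrow y$, $w_1\leftrightarrow w_4$, $w_2\leftrightarrow w_3$, the identical argument yields $\{y,w_2\}\in E(G)$. The chief bookkeeping difficulty is the $6$-cycle case: one must verify that whichever chord from $\{x,w_4\},\{y,w_1\},\{y,w_2\}$ is chosen (individually or in combination), chordality then forces a further induced cycle of length $\geq 4$ whose diagonals are all among the pairs $\{x,w_2\},\{x,w_3\},\{y,w_3\},\{w_1,w_3\},\{w_1,w_4\},\{w_2,w_4\}$ established as non-edges in the preliminary step.
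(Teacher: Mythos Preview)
Your approach is essentially identical to the paper's: the same preliminary exclusions of $\{x,w_2\},\{y,w_3\}$, the same contradiction argument forcing $\{w_2,w_3\},\{w_3,w_4\}\in E(G)$, and the same case analysis producing induced $C_4$, $C_5$, or $C_6$ in $G$; the only organizational difference is that the paper splits first on whether $\{y,w_2\}\in E(G)$ while you split first on whether $\{w_1,w_2\}\in E(G)$.

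One small slip to fix: in the $6$-cycle subcase you say ``either of $\{y,w_1\},\{y,w_2\}\in E(G)$ reduces to the $4$-cycle case above,'' but $\{y,w_1\}\in E(G)$ does \emph{not} directly yield a $4$-cycle---it yields the induced $5$-cycle $y\text{--}w_1\text{--}w_2\text{--}w_3\text{--}w_4\text{--}y$ (all diagonals $\{y,w_2\},\{y,w_3\},\{w_1,w_3\},\{w_1,w_4\},\{w_2,w_4\}$ are non-edges, the first either by a further split or because you are already past the $\{y,w_2\}$ case). You should also state explicitly that if none of $\{x,w_4\},\{y,w_1\},\{y,w_2\}$ is an edge then the $6$-cycle itself is induced. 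With those two clarifications the argument is complete and matches the paper.
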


\noindent
\textit{Proof.} First note that $\{x,w_2\}\notin E(G)$ since $\{w_2,w_4\}\notin E(\widetilde{G})$. Similarly, $\{y,w_3\}\notin E(G)$. Now we proceed to show that $\{x,w_3\}\in E(G)$. On the contrary, suppose $\{x,w_3\}\notin E(G)$. This implies $\{w_2,w_3\},\{w_3,w_4\}\in E(G)$ since we also have $\{y,w_3\}\notin E(G)$. Now if $\{y,w_2\}\in E(G)$, then $G[\{y,w_2,w_3,w_4\}]\cong C_4$, a contradiction to the fact that $G$ is chordal. If $\{y,w_2\}\notin E(G)$, then it is easy to see that $\{w_1,w_2\}\in E(G)$ since $\{x,w_2\}\notin E(G)$. Consequently, $G[\{x,w_1,w_2,w_3,w_4\}]\cong C_5$ if $\{x,w_4\}\in E(G)$, and $G[\{y,w_1,w_2,w_3,w_4\}]\cong C_5$ if $\{y,w_1\}\in E(G)$, and $G[\{x,y,w_1,w_2,w_3,w_4\}]\cong C_6$ if neither $\{x,w_4\},\{y,w_1\}$ are edges of $G$. In all cases, we have a contradiction since $G$ is chordal. Therefore, we must have $\{x,w_3\}\in E(G)$. Proceeding along the same lines as above, we also have $\{y,w_2\}\in E(G)$, by symmetry. 
\qed

Using the above lemma, we show the following important result.

\begin{proposition}\label{weakly chordal}
	Let $\{x,y\}$ be an edge of a chordal graph $G$ and let $(I(G)^{[2]}:xy)=I(\widetilde{G})$. Then, $\widetilde{G}$ is a weakly chordal graph.
\end{proposition}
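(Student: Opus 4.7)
The plan is to verify both conditions in the definition of weak chordality: (a) $\widetilde{G}$ has no induced cycle of length at least $5$, and (b) $\widetilde{G}^c$ has no induced cycle of length at least $5$. Both parts will be handled by contradiction, exploiting the chordality of $G$ throughout, together with \Cref{edge lemma}.

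For (a), assume $w_1, w_2, \ldots, w_n$ (with $n \geq 5$) is an induced cycle in $\widetilde{G}$. For any non-consecutive pair $\{w_i, w_j\}$ (i.e., $|i-j| \geq 2$ cyclically), its being a non-edge of $\widetilde{G}$ forces both that it is a non-edge of $G$ and that the ``mixed-neighbor'' configuration fails: it cannot happen that one of $w_i, w_j$ lies in $N_G(x)$ and the other in $N_G(y)$. Each consecutive edge $\{w_i, w_{i+1}\}$ either lies in $E(G)$ or is a new edge of $\widetilde{G}$. If all consecutive edges are in $E(G)$, then the cycle is already induced in $G$, contradicting chordality. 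Otherwise, after a cyclic relabeling, I may assume $\{w_1, w_n\}$ is new with $w_1 \in N_G(x)$ and $w_n \in N_G(y)$. The non-mixed constraint on non-consecutive pairs then forces $w_i \notin N_G(y)$ for $3 \leq i \leq n-1$ and $w_j \notin N_G(x)$ for $2 \leq j \leq n-2$, which in turn implies $\{w_i, w_{i+1}\} \in E(G)$ for every $2 \leq i \leq n-2$. A short case analysis on whether $\{w_1, w_2\}$ and $\{w_{n-1}, w_n\}$ lie in $E(G)$ or are new produces in each case an induced cycle of length $n$, $n+1$, or $n+2$ in $G$ through $x$, $y$, and an appropriate subset of the $w_i$, contradicting chordality.

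For (b), suppose $w_1, \ldots, w_n$ (with $n \geq 5$) is an induced cycle in $\widetilde{G}^c$. When $n = 5$, the identity $\overline{C_5} = C_5$ immediately yields an induced $C_5$ in $\widetilde{G}$, contradicting (a). For $n \geq 6$, the subgraph $\widetilde{G}[\{w_1, \ldots, w_n\}]$ equals $\overline{C_n}$, and one can locate an induced $C_4$ within it, for example on $\{w_2, w_3, w_5, w_6\}$ with cycle $w_2-w_5-w_3-w_6-w_2$ and non-edges $\{w_2, w_3\}$ and $\{w_5, w_6\}$. These two non-edges are inherited as non-edges of $G$, and since $G[\{w_2, w_3, w_5, w_6\}]$ cannot be $C_4$ by chordality, at least one of the four $C_4$-edges must be a new edge in $E(\widetilde{G}) \setminus E(G)$. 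Applying \Cref{edge lemma} to that new edge yields two additional edges from $x$ or $y$ to specific opposite $w_\bullet$ in $E(G)$. Combining these with the non-mixed constraints on the consecutive pairs of the $\widetilde{G}^c$-cycle determines, up to a small ambiguity, which $w_i$ lie in $N_G(x)$ and which in $N_G(y)$; from this data one exhibits an induced cycle of length $4$ or $5$ in $G$ through $x$, $y$, and a few of the $w_i$, again contradicting chordality.

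The main obstacle lies in (b): for $n \geq 6$ one must perform a careful case analysis on which of the four $C_4$-edges is the new one (up to symmetry, these reduce to a ``distance-$2$'' and a ``distance-$3$'' sub-case within $\overline{C_n}$) and then propagate the constraints via \Cref{edge lemma} to force enough edges in $G$ to build a forbidden short induced cycle. While $n = 6$ can be handled relatively directly, the case $n \geq 7$ requires more delicate bookkeeping, possibly using repeated applications of \Cref{edge lemma} to different induced $C_4$'s within the large $\overline{C_n}$ inside $\widetilde{G}$, which is the chief technical challenge.
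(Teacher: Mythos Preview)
Your plan for part (a) is essentially the paper's argument: assume an induced $C_n$ in $\widetilde{G}$ with $n\ge 5$, normalize so that $\{w_1,w_n\}$ is a new edge with $w_1\in N_G(x)$, $w_n\in N_G(y)$, use the non-edges of the cycle to force $w_i\notin N_G(x)$ for $2\le i\le n-2$ and $w_j\notin N_G(y)$ for $3\le j\le n-1$, deduce that the middle edges lie in $E(G)$, and finish with a short case split. The paper closes slightly differently---it shows $\{x,w_{n-1}\},\{y,w_2\}\in E(G)$ and hence $\{w_2,w_{n-1}\}\in E(\widetilde{G})$, contradicting the induced cycle directly---but your ``build a long induced cycle in $G$ through $x$ and/or $y$'' endgame is the same argument viewed contrapositively.

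For part (b), your opening moves match the paper: rule out $n=5$ via $C_5^c\cong C_5$, and for $n\ge 6$ pick an induced $C_4$ inside $\overline{C_n}\subseteq \widetilde{G}$, observe that chordality of $G$ forces one of its four edges to be new, and invoke \Cref{edge lemma}. The divergence is in the endgame. You propose to use the neighbor constraints to ``exhibit an induced cycle of length $4$ or $5$ in $G$ through $x,y$ and a few of the $w_i$'', and you flag this as the main unresolved difficulty, especially for $n\ge 7$. The paper does \emph{not} try to build a short cycle in $G$ here. Instead, having applied \Cref{edge lemma} to the first $C_4$ (on $\{v_1,v_2,v_{n-2},v_{n-1}\}$), it passes to a \emph{second} induced $C_4$ in $\widetilde{G}$, namely on $\{v_2,v_3,v_{n-1},v_n\}$, and checks that each of its four edges being new leads (via \Cref{edge lemma} again, or via the consecutive non-edges $\{v_1,v_n\},\{v_{n-1},v_n\}$ of $\overline{C_n}$) to some forbidden edge such as $\{v_1,v_n\}$ or $\{v_2,v_3\}$ appearing in $E(\widetilde{G})$. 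The contradiction is thus always obtained inside $\widetilde{G}$, not in $G$, and the argument is uniform in $n\ge 6$: there is no need to separate $n=6$ from $n\ge 7$, and no ``delicate bookkeeping'' for large $n$. Your approach of hunting for a short induced cycle in $G$ is not obviously wrong, but it is not worked out, and the paper's two-$C_4$ trick gives a cleaner route that you are missing.
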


\noindent
\textit{Proof.}
	First, we proceed to show that $\widetilde{G}$ does not contain any induced cycle of length $5$ or more. On the contrary, suppose $\widetilde{G}$ has an induced cycle $C_n$ on the vertex set $\{u_1,\ldots,u_n\}$ with edge set $\{\{u_i,u_{i+1}\},\{u_1,u_n\}\mid i\in[n-1]\}$, where $n\ge 5$. Without loss of generality we may assume that $\{u_1,u_n\}\in E(\widetilde{G})\setminus E(G)$ since $G$ is chordal. Let us also assume $\{x,u_1\},\{y,u_n\}\in E(G)$. Our aim now is to prove the following:
	\begin{enumerate}[label=(\roman*)]
		\item $\{x,u_i\}\notin E(G)$ for $2\le i\le n-2$,
		\item $\{y,u_j\}\notin E(G)$ for $3\le j\le n-1$,
		\item $\{u_i,u_{i+1}\}\in E(G)$ for $2\le i\le n-2$.
	\end{enumerate}
	Note that if $\{x,u_i\}\in E(G)$ for some $2\le i\le n-2$, then $\{u_i,u_n\}\in E(\widetilde{G})$, which is a contradiction to the fact that $\widetilde{G}[\{u_1,\ldots,u_n\}]\cong C_n$. This proves (i). Similarly, $\{y,u_j\}\in E(G)$ for some $3\le j\le n-1$ implies $\{u_1,u_j\}\in E(\widetilde{G})$, which is again a contradiction. Thus, we have (ii). To prove (iii), observe that if for some $2\le i\le n-3$, $\{u_i,u_{i+1}\}\notin E(G)$, then either $\{x,u_i\}\in E(G)$ or $\{x,u_{i+1}\}\in E(G)$, which is a contradiction to (i) above. Now if $\{u_{n-2},u_{n-1}\}\notin E(G)$, then either $\{x,u_{n-2}\}\in E(G)$ or $\{y,u_{n-2}\}\in E(G)$. But $\{x,u_{n-2}\}\notin E(G)$ and $\{y,u_{n-2}\}\notin E(G)$ because of (i) and (ii), respectively. Thus, we have (iii).

    \begin{wrapfigure}{R}{0.25\textwidth}
        \begin{tikzpicture}[scale=0.75]
            \draw [fill] (0,0) circle [radius=0.1];
            \draw [fill] (2,0) circle [radius=0.1];
            \draw [fill] (4,0) circle [radius=0.1];
            \draw [fill] (0,2) circle [radius=0.1];
            \draw [fill] (2,2) circle [radius=0.1];
            \draw [fill] (4,2) circle [radius=0.1];
            \draw [fill] (2,-2) circle [radius=0.1];
            \draw [fill] (4,-2) circle [radius=0.1];

            \node at (0,-0.5) {$y$};
            \node at (2.7,-0.5) {$v_{n-1}$};
            \node at (4.5,0) {$v_2$};
            \node at (0,2.5) {$x$};
            \node at (2,2.5) {$v_1$};
            \node at (4,2.5) {$v_{n-2}$};
            \node at (2,-2.5) {$v_3$};
            \node at (4,-2.5) {$v_{n}$};

            \draw (0,0)--(2,0)--(4,0)--(4,2)--(2,2)--(0,2)--(0,0);
            \draw[dashed, red] (2,0)--(2,2);
            \draw  (2,0)--(2,-2)--(4,-2)--(4,0);
    \end{tikzpicture}
 \end{wrapfigure}
	Next, we proceed to show that $\{x,u_{n-1}\},\{y,u_2\}\in E(G)$. Indeed, observe that if $\{x,u_{n-1}\}\notin E(G)$, then $\{u_n,u_{n-1}\}\in E(G)$,  by (ii). In this case if $\{y,u_2\}\in E(G)$, then $G[\{y,u_2,\ldots,u_{n}\}]\cong C_n$, where $n\ge 5$. This contradicts the fact that $G$ is chordal. Hence we have $\{x,u_{n-1}\},\{y,u_2\}\notin E(G)$. But this implies $\{u_1,u_2\}\in E(G)$, by (i), and consequently, we have $G[\{x,u_1,u_2,\ldots,u_n\}]\cong C_{n+1}$ if $\{x,u_n\}\in E(G)$; $G[\{y,u_1,u_2,\ldots,u_n\}]\cong C_{n+1}$ if $\{y,u_1\}\in E(G)$;   $G[\{x,y,u_1,u_2,\ldots,u_n\}]\cong C_{n+2}$ if $\{x,u_n\},\{y,u_1\}\notin E(G)$. In all the cases, we again have contradictions. Therefore, we must have $\{x,u_{n-1}\}\in E(G)$. Similarly, one can show that $\{y,u_2\}\in E(G)$. However, $\{x,u_{n-1}\},\{y,u_2\}\in E(G)$ implies $\{u_2,u_{n-1}\}\in E(\widetilde{G})$, a contradiction to the fact that $\widetilde{G}[\{u_1,\ldots,u_n\}]\cong C_n$. This completes the proof of the fact that $\widetilde{G}$ does not contain any induced cycle of length $5$ or more.
	
	Next, our aim is to show that $(\widetilde{G})^c$ does not contain any induced cycle of length $5$ or more. From above, it is easy to see that $(\widetilde{G})^c$ does not contain any induced $C_5$ since the complement of $C_5$ is again a $C_5$. Now, on the contrary, suppose $(\widetilde{G})^c$ has an induced cycle $C_n$ on the vertex set $\{v_1,\ldots,v_n\}$ with edge set $\{\{v_i,v_{i+1}\},\{v_1,v_n\}\mid i\in[n-1]\}$, where $n\ge 6$. Then observe that $\widetilde{G}[\{v_1,v_2,v_{n-2},v_{n-1}\}]\cong C_4$. We have the following four cases:
	
	\noindent
	\textbf{Case I:} $\{v_1,v_{n-1}\}\in E(\widetilde{G})\setminus E(G)$. In this case, without loss of generality, we may assume that $\{x,v_1\},\{y,v_{n-1}\}\in E(G)$. Then, by \Cref{edge lemma}, we have $\{x,v_2\},\{y,v_{n-2}\}\in E(G)$. Now, observe that $\widetilde{G}[\{v_2,v_3,v_{n-1},v_n\}]\cong C_4$ and since $G$ is chordal, the induced subgraph $\widetilde{G}[\{v_2,v_3,v_{n-1},v_n\}]$ of $\widetilde{G}$ must have a new edge. If the new edge is $\{v_3,v_{n-1}\}$, then either $\{y,v_3\}\in E(G)$ or $\{x,v_3\}\in E(G)$. However, $\{y,v_3\}\in E(G)$ implies $\{v_2,v_3\}\in E(\widetilde{G})$, a contradiction to the fact that $(\widetilde{G})^c[v_1,\ldots,v_n]\cong C_n$. Similarly, $\{x,v_3\}\in E(G)$ together with \Cref{edge lemma} imply $\{y,v_n\}\in E(G)$, and consequently, $\{v_1,v_n\}\in E(\widetilde{G})$; again a contradiction. Therefore, $\{v_3,v_{n-1}\}$ cannot be the new edge. On the other hand, $\{x,v_n\}\notin E(G)$ since $\{v_{n-1},v_n\}\notin E(\widetilde{G})$. Similarly, $\{y,v_n\}\notin E(G)$ since $\{v_1,v_n\}\notin E(\widetilde{G})$. Consequently, both $\{v_3,v_n\}$ and $\{v_2,v_n\}$ cannot be the new edge. Lastly, if $\{v_2,v_{n-1}\}$ is the new edge, then by \Cref{edge lemma}, $\{y,v_n\}\in E(G)$, and consequently, $\{v_1,v_n\}\in E(\widetilde{G})$, again a contradiction.
	
	\noindent
	\textbf{Case II:} $\{v_2,v_{n-2}\}\in E(\widetilde{G})\setminus E(G)$. Without loss of generality, assume that $\{x,v_{n-2}\},\{y,v_2\}\in E(G)$. Then, by \Cref{edge lemma}, we have $\{x,v_{n-1}\},\{y,v_{1}\}\in E(G)$. Now, by similar arguments as before, we have $\widetilde{G}[\{v_2,v_3,v_{n-1},v_n\}]\cong C_4$ and thus the induced subgraph $\widetilde{G}[\{v_2,v_3,v_{n-1},v_n\}]$ of $\widetilde{G}$ must have a new edge since  $G$ is chordal. If the new edge is $\{v_3,v_{n-1}\}$, then we have either $\{x,v_3\}\in E(G)$ or $\{y,v_3\}\in E(G)$. However, $\{x,v_3\}\in E(G)$ implies that $\{v_2,v_3\}\in E(\widetilde{G})$, a contradiction to the fact that $(\widetilde{G})^c[v_1,\ldots,v_n]\cong C_n$. Similarly, $\{y,v_3\}\in E(G)$ together with \Cref{edge lemma} imply $\{x,v_n\}\in E(G)$, and consequently, $\{v_1,v_n\}\in E(\widetilde{G})$; again a contradiction. Therefore, $\{v_3,v_{n-1}\}$ cannot be the new edge. On the other hand, $\{x,v_n\}\notin E(G)$ since $\{v_{1},v_n\}\notin E(\widetilde{G})$. Similarly, $\{y,v_n\}\notin E(G)$ since $\{v_{n-1},v_n\}\notin E(\widetilde{G})$. Consequently, both $\{v_2,v_n\}$ and $\{v_3,v_n\}$ cannot be the new edge. Lastly, if $\{v_2,v_{n-1}\}$ is the new edge, then by \Cref{edge lemma}, $\{x,v_n\}\in E(G)$, and consequently, $\{v_1,v_n\}\in E(\widetilde{G})$, again a contradiction.
	
	\noindent
	\textbf{Case III:} $\{v_1,v_{n-2}\}\in E(\widetilde{G})\setminus E(G)$. Without loss of generality, assume that $\{x,v_{1}\},\{y,v_{n-2}\}\in E(G)$. Then, by \Cref{edge lemma}, we have $\{x,v_{2}\},\{y,v_{n-1}\}\in E(G)$. Again, as before, we have $\widetilde{G}[\{v_2,v_3,v_{n-1},v_n\}]\cong C_4$, and since $G$ is chordal, the induced subgraph $\widetilde{G}[\{v_2,v_3,v_{n-1},v_n\}]$ of $\widetilde{G}$ must have a new edge. If the new edge is $\{v_3,v_{n-1}\}$, then either $\{x,v_3\}\in E(G)$ or $\{y,v_3\}\in E(G)$. However, $\{y,v_3\}\in E(G)$ implies $\{v_2,v_3\}\in E(\widetilde{G})$, a contradiction to the fact that $(\widetilde{G})^c[v_1,\ldots,v_n]\cong C_n$. Similarly, $\{x,v_3\}\in E(G)$ together with \Cref{edge lemma} imply $\{y,v_n\}\in E(G)$, and consequently, $\{v_1,v_n\}\in E(\widetilde{G})$; again a contradiction. Therefore, $\{v_3,v_{n-1}\}$ cannot be the new edge. On the other hand, $\{x,v_n\}\notin E(G)$ since $\{v_{n-1},v_n\}\notin E(\widetilde{G})$. Similarly, $\{y,v_n\}\notin E(G)$ since $\{v_{1},v_n\}\notin E(\widetilde{G})$. Consequently, both $\{v_2,v_n\}$ and $\{v_3,v_n\}$ cannot be the new edge. Lastly, if $\{v_2,v_{n-1}\}$ is the new edge, then again by \Cref{edge lemma}, $\{y,v_n\}\in E(G)$, and consequently, $\{v_1,v_n\}\in E(\widetilde{G})$, again a~contradiction.
	
	\noindent
	\textbf{Case IV:} $\{v_2,v_{n-1}\}\in E(\widetilde{G})\setminus E(G)$. This case is similar to Case III above in terms of symmetry.

    \noindent
	Thus, we conclude that $\widetilde{G}$ is a weakly chordal graph.		
\qed

In the next four lemmas, we establish various relationships between the admissible matchings of the graphs $G$ and $\widetilde{G}$.	
	\begin{lemma}\label{aim induced lemma 1}
		Let $\{x,y\}$ be an edge of a chordal graph $G$ and let $(I(G)^{[2]}:xy)=I(\widetilde{G})$. If $M$ is an induced matching of $\widetilde{G}$, then one of the following holds:
		\begin{enumerate}[label=(\roman*)]
			\item $V(M)\cap N_G(x)=\emptyset$;
			\item $V(M)\cap N_G(y)=\emptyset$;
			\item $|\{e\in M\mid N_G(x,y)\cap e\neq\emptyset\}|=1$.
		\end{enumerate}
	\end{lemma}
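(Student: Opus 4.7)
The plan is to argue by contradiction, assuming that (i), (ii), and (iii) all fail simultaneously, and derive a contradiction using only the defining construction of $\widetilde{G}$. Notably, chordality of $G$ will not be needed for this lemma.

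Since (i) and (ii) fail, there exist vertices $u_1, u_2 \in V(M)$ with $u_1 \in N_G(x)$ and $u_2 \in N_G(y)$. Because $V(M) \subseteq V(\widetilde{G}) = V(G) \setminus \{x, y\}$, these in fact satisfy $u_1 \in N_G(x) \setminus \{y\}$ and $u_2 \in N_G(y) \setminus \{x\}$. I would then define $E_x \coloneqq \{e \in M \mid e \cap N_G(x) \neq \emptyset\}$ and $E_y \coloneqq \{e \in M \mid e \cap N_G(y) \neq \emptyset\}$. Both $E_x, E_y$ are nonempty, and since no vertex of $V(M)$ equals $x$ or $y$, the set of edges counted in (iii) is exactly $E_x \cup E_y$. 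The failure of (iii) combined with the nonemptyness of $E_x$ forces $|E_x \cup E_y| \geq 2$.

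The next step is to produce distinct edges $e \in E_x$ and $e' \in E_y$ with $e \neq e'$: if $E_x = E_y$ then $|E_x| \geq 2$ and any two distinct members of $E_x$ work; otherwise at least one of $E_x \setminus E_y$, $E_y \setminus E_x$ is nonempty, and taking $e$ from that set together with any $e'$ from the other of $E_x, E_y$ gives the required pair. Once such an $(e, e')$ is at hand, pick $u \in e \cap N_G(x)$ and $v \in e' \cap N_G(y)$. Since $e$ and $e'$ are disjoint edges of the matching $M$, we have $u \neq v$, and as observed above $u \in N_G(x) \setminus \{y\}$ and $v \in N_G(y) \setminus \{x\}$. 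By the very construction of $\widetilde{G}$, the pair $\{u,v\}$ is then an edge of $\widetilde{G}$, yet $u$ and $v$ belong to different edges of the induced matching $M$, contradicting $M$ being induced.

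The argument is essentially elementary; the only mildly delicate point is the bookkeeping in extracting the pair $(e, e')$, where the precise form of condition (iii) (\emph{exactly} one, rather than \emph{at most} one) becomes essential, since in the $E_x = E_y$ singleton scenario one would not obtain a contradiction otherwise. This is the main obstacle I anticipate, and the case split above is designed to handle it uniformly.
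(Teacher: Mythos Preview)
Your argument is correct and follows essentially the same route as the paper: assume (i) and (ii) fail, then show that if the set $\{e\in M\mid e\cap N_G(x,y)\neq\emptyset\}$ has at least two elements one can extract distinct edges $e\in E_x$ and $e'\in E_y$ and produce a $\widetilde{G}$-edge between them via the defining construction, contradicting that $M$ is induced. Your remark that chordality plays no role here is also accurate; the paper's proof does not invoke it either.
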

	\begin{proof}
		We prove that if (i) and (ii) do not hold, then (iii) holds. Let $X=\{e\in M\mid N_G(x,y)\cap e\neq\emptyset\}$. If (i) and (ii) do not hold, then $|X|\ge 1$. Suppose if possible, $|X|\ge 2$, and $e_1,e_2\in X$. Observe that $e_1\cap e_2=\emptyset$ since $e_1,e_2\in M$. Without loss of generality, let us assume that $e_1\cap N_G(x)\neq\emptyset$, and $a\in e_1\cap N_G(x)$, for some $a\in V(G)$. If $e_2\cap N_G(y)\neq\emptyset$, then for each $b\in e_2\cap N_G(y)$, $\{a,b\}\in E(\widetilde{G})$, a contradiction to the fact that $e_1,e_2\in M$. Thus we must have $e_2\cap N_G(y)=\emptyset$. In this case, $e_2\cap N_G(x)\neq\emptyset$, and let $c\in e_2\cap N_G(x)$. Since (ii) does not hold, there exists some $e\in M\setminus\{e_2\}$ such that $e\cap N_G(y)\neq\emptyset$. Let $e\cap N_G(y)=\{d\}$ for some $d\in V(G)$. Consequently, $\{c,d\}\in E(\widetilde{G})$, again a contradiction. Thus, (iii) holds.  
	\end{proof}
	
	\begin{lemma}\label{aim induced lemma 2}
		Let $G$ be a Cohen-Macaulay chordal graph with $V(G)=W_1\sqcup W_2\sqcup \dots\sqcup W_m$ such that for each $i\in [m]$, $G[W_i]\cong K_{t_i}$, where $t_i\ge 2$. Let $x\in W_i$ such that $W_i\setminus \{x\}$ contains at least one free vertex of $G$. Let $y\in N_G[x]\setminus W_i$ and $(I(G)^{[2]}:xy)=I(\widetilde{G})$. Then $\nu_1(\widetilde{G})\le \mathrm{aim}(G,2)-1$.
	\end{lemma}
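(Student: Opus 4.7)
The plan is to take an arbitrary induced matching $M$ of $\widetilde{G}$ and produce a $2$-admissible matching $M^*$ of $G$ with $|M^*|=|M|+1$; applying this to a maximum induced matching yields $\aim(G,2)\ge \nu_1(\widetilde{G})+1$. Two structural ingredients will be used repeatedly. First, $z\in W_i\setminus\{x\}$ is the given free vertex of $G$, with $N_G[z]=W_i$. Second, writing $y\in W_j$ (where $j\ne i$ since $y\notin W_i$), $y$ itself cannot be free, for otherwise $N_G[y]$ would be a clique containing both $W_j$ and~$x$, giving the strictly larger clique $W_j\cup\{x\}$ and contradicting the maximality of~$W_j$. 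Hence $W_j$ admits a free vertex $z_y\ne y$, with $N_G[z_y]=W_j$.

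Lemma~\ref{aim induced lemma 1} gives three cases. In Case~(i) where $V(M)\cap N_G(x)=\emptyset$, I set $M^*:=M\cup\{\{x,z\}\}$ and show it is an induced matching of $G$: every new edge of $\widetilde{G}$ requires a vertex in $N_G(x)$, so $M\subseteq E(G)$ and consequently $G[V(M)]=\widetilde{G}[V(M)]=M$; furthermore, neither $x$ nor $z$ has a $G$-neighbor in $V(M)$ since $V(M)\cap N_G(x)=\emptyset$ and $N_G[z]=W_i\subseteq\{x\}\cup N_G(x)$. Case~(ii), where $V(M)\cap N_G(y)=\emptyset$, is completely symmetric via $M^*:=M\cup\{\{y,z_y\}\}$, using the auxiliary free vertex $z_y$.

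In Case~(iii) with Cases~(i) and~(ii) both failing, the unique edge $e_0\in M$ meeting $N_G(x)\cup N_G(y)$ must intersect both $N_G(x)$ and $N_G(y)$. If $e_0\in E(G)$, I take $M^*:=M\cup\{\{x,y\}\}$ with partition $M_1:=\{e_0,\{x,y\}\}$ plus singletons for $M\setminus\{e_0\}$. If $e_0\notin E(G)$, then $e_0$ is a new edge $\{p,q\}$ with $p\in N_G(x)\setminus\{y\}$, $q\in N_G(y)\setminus\{x\}$; I replace $e_0$ by the two $G$-edges $\{x,p\}$ and $\{y,q\}$, setting $M^*:=(M\setminus\{e_0\})\cup\{\{x,p\},\{y,q\}\}$ with partition $M_1:=\{\{x,p\},\{y,q\}\}$ plus singletons. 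In both subcases $|M^*|=|M|+1$ and the partition has $r=|M|$ parts, saturating the bound $|M^*|\le r+1$; for condition~(1) of $2$-admissibility, every $G$-edge of $V(M^*)$ incident to $x,y,p,$ or $q$ must have its other endpoint in $V(M_1)$, which follows from $V(M)\cap(N_G(x)\cup N_G(y))\subseteq e_0$ together with the observation that any $G$-edge from $\{p,q\}$ to $V(M)\setminus e_0$ would already appear as an edge of $\widetilde{G}[V(M)]\setminus M$, contradicting that $M$ is induced in~$\widetilde{G}$. This last check, especially in the subcase $e_0\notin E(G)$, is where the most delicate bookkeeping lies and is the main obstacle.
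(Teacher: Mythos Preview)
Your argument is correct and follows the same scaffolding as the paper: apply Lemma~\ref{aim induced lemma 1}, handle cases (i) and (ii) by adjoining an edge through the appropriate free vertex, and in case (iii) build a $2$-admissible matching of size $|M|+1$ with one block of size two. The checks you flag as delicate all go through; in particular, any $G$-edge from $\{p,q\}$ to $V(M)\setminus e_0$ is automatically a $\widetilde{G}$-edge (since $E(G\setminus\{x,y\})\subseteq E(\widetilde{G})$), so inducedness of $M$ rules it out, and the gaps among the $e_i$ for $i>1$ come for free for the same reason.

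The one place you diverge from the paper is the construction in case (iii). The paper discards $e_1$ altogether and sets $M_1=\{\{x,u\},\{y,v\}\}$ using the free vertices $u\in W_i$ and $v\in W_j$; this is uniform in whether or not $e_1\in E(G)$, the key point being $N_G(u)\cup N_G(v)\subseteq N_G(x)\cup N_G(y)$. Your construction instead recycles $e_0$: when $e_0\in E(G)$ you take $M_1=\{e_0,\{x,y\}\}$, and when $e_0\notin E(G)$ you unfold the new edge into $M_1=\{\{x,p\},\{y,q\}\}$. Your route avoids using the free vertices in case (iii) at the price of a subcase split; the paper's route is more uniform but leans on the Cohen--Macaulay structure throughout. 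Both are clean and short.
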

	\begin{proof}
		Let $M$ be an induced matching of $\widetilde{G}$ such that $|M|=\nu_1(\widetilde{G})$. Then, by \Cref{aim induced lemma 1}, one of the following holds.
		\begin{enumerate}[label=(\roman*)]
			\item $V(M)\cap N_G(x)=\emptyset$;
			\item $V(M)\cap N_G(y)=\emptyset$;
			\item $|\{e\in M\mid N_G(x,y)\cap e\neq\emptyset\}|=1$.
		\end{enumerate}
		It is easy to see that if (i) or (ii) holds, then $M\subseteq E(G)$, and if (iii) holds, then $|M\setminus E(G)|\le 1$. By the given hypothesis, we have $y\in W_j$ for some $j\neq i$. In this case, there exists some $v\in W_j\setminus \{y\}$ such that $v$ is a free vertex of $G$. Note that, $W_i\setminus\{x\}$ also contains a free vertex of $G$, say $u$. If (i) holds, then one can observe that $M'=M\cup\{\{x,u\}\}$ is an induced matching of $G$, and if (ii) holds, then $M''=M\cup\{\{y,v\}\}$ is an induced matching of $G$. Consequently, in both cases we have $\nu_1(\widetilde{G})\le \nu_1(G)-1\le \mathrm{aim}(G,2)-1$ by \Cref{aim property}(5). Now, suppose (iii) holds. In this case, suppose $M=\{e_1,\ldots,e_r\}$ and  $e_1\in M$ such that $e_1\cap N_G(x,y)\neq\emptyset$. Then for each $f\in M\setminus\{e_1\}$, $f,\{x,u\}$ and $f,\{y,v\}$ both forms a gap, since $N_G(u,v)\subseteq N_G(x,y)$. Thus if we take $M_1=\{\{x,u\},\{y,v\}\}$ and $M_i=\{e_i\}$ for $i>1$, then $\sqcup_{i=1}^r M_i$ forms a $2$-admissible matching of $G$. Consequently, in this case too we have $\nu_1(\widetilde{G})\le \mathrm{aim}(G,2)-1$.
	\end{proof}
	
	\begin{lemma}\label{aim induced lemma 3}
		Let $G$ be a Cohen-Macaulay chordal graph with $V(G)=W_1\sqcup W_2\sqcup \dots\sqcup W_m$ such that for each $i\in [m]$, $G[W_i]\cong K_{t_i}$, where $t_i\ge 2$. Let $x\in W_i$ be a free vertex of $G$ such that $W_i\setminus \{x\}$ contains another free vertex of $G$. Let $y\in W_i\setminus\{x\}$ and $(I(G)^{[2]}:xy)=I(\widetilde{G})$. Then $\nu_1(\widetilde{G})\le \mathrm{aim}(G,2)-1$.
	\end{lemma}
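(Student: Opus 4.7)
The plan is to follow the template of \Cref{aim induced lemma 2}, suitably adapted to the situation where $x$ and $y$ belong to the same clique $W_i$. Since $x$ is free in $G$ we have $N_G(x)=W_i\setminus\{x\}$, and since $W_i$ is a clique containing $y$ we have $W_i\setminus\{y\}\subseteq N_G(y)$. Fix a vertex $z\in W_i\setminus\{x\}$ that is free in $G$, as guaranteed by the hypothesis (possibly $z=y$); note that $N_G[\{x,z\}]=W_i$. Start with an induced matching $M$ of $\widetilde{G}$ with $|M|=\nu_1(\widetilde{G})$, and invoke \Cref{aim induced lemma 1} to split into cases (i), (ii), (iii).

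In cases (i) and (ii) the plan is to first verify that $V(M)\cap W_i=\emptyset$: for (i) this follows from $V(M)\cap N_G(x)=V(M)\cap(W_i\setminus\{x\})=\emptyset$ together with $x\notin V(\widetilde{G})$; for (ii) the symmetric argument uses $W_i\setminus\{y\}\subseteq N_G(y)$ and $y\notin V(M)$. Once $V(M)\cap W_i=\emptyset$ is in hand, since $N_G[\{x,z\}]=W_i$, the set $M\cup\{\{x,z\}\}$ is an induced matching of $G$, yielding $\nu_1(\widetilde{G})+1\leq\nu_1(G)\leq\aim(G,2)$ via \Cref{aim property}(2).

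The essential case is (iii). Let $e_1$ be the unique edge of $M$ with $e_1\cap N_G(x,y)\neq\emptyset$. For every $e\in M\setminus\{e_1\}$, the condition $e\cap N_G(x,y)=\emptyset$ forces both $e\cap W_i=\emptyset$ and $e\cap N_G(y)=\emptyset$; it also forces $e\in E(G)$, since every new edge of $\widetilde{G}$ has both endpoints in $N_G(x,y)$. I then split on whether $e_1\in E(G)$. If $e_1\in E(G)$, I propose the $2$-admissible matching $M^*=M\cup\{\{x,y\}\}$ of $G$ with partition $M_1=\{e_1,\{x,y\}\}$ and singletons $M_\ell=\{e_\ell\}$ for $\ell\geq 2$. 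If $e_1\notin E(G)$, then $e_1=\{u,v\}$ is a new edge of $\widetilde{G}$, and one may assume $u\in N_G(x)\setminus\{y\}=W_i\setminus\{x,y\}$ and $v\in N_G(y)\setminus\{x\}$; moreover $v\notin W_i$, as otherwise $\{u,v\}\in E(G)$ would contradict $e_1\notin E(G)$. In this case I propose $M^*=(M\setminus\{e_1\})\cup\{\{x,u\},\{y,v\}\}$ with partition $M_1=\{\{x,u\},\{y,v\}\}$ and singletons for the remaining edges.

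The main obstacle will be verifying condition~(1) of \Cref{def:aim-d-uniform} in the sub-case $e_1\notin E(G)$: that no edge of $G$ crosses between $V(M_1)=\{x,y,u,v\}$ and $V(M_\ell)=e_\ell$ for $\ell\geq 2$. For the endpoints $x$ and $y$ this will follow from $e_\ell\cap N_G(x,y)=\emptyset$; for the endpoints $u$ and $v$ it will follow from the induced matching property of $M$ in $\widetilde{G}$, because $u,v$ and $e_\ell$ all lie in $V(\widetilde{G})$, so any $G$-edge between them would also be a $\widetilde{G}$-edge. The $G$-edges $\{x,y\}$ and $\{y,u\}$ sitting inside $V(M_1)$ cause no trouble, since condition~(1) only forbids edges straddling distinct parts. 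An analogous but simpler check will handle the sub-case $e_1\in E(G)$, and in both sub-cases $|M^*|=|M|+1=r+1$ satisfies condition~(2). Hence $\nu_1(\widetilde{G})+1\leq\aim(G,2)$, as desired.
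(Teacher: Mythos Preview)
Your proposal is correct and follows essentially the same route as the paper's proof. The only organizational difference is that the paper splits into two cases according to whether $y$ itself is free (treating $z=y$ and $z\neq y$ separately, and in the latter case using $\{y,z\}$ rather than $\{x,z\}$ in situation (ii)), whereas you handle both uniformly via the single free vertex $z$; the resulting $2$-admissible matchings and the verification of \Cref{def:aim-d-uniform}(1) are identical in substance.
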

	\begin{proof}
		Let $M$ be an induced matching of $\widetilde{G}$ such that $|M|=\nu_1(\widetilde{G})$. We have the following two ~cases:
		
		\noindent
		\textbf{Case I:} $y$ is a free vertex of $G$. In this case, $N_G[x]=N_G[y]$ and $M\subseteq E(G)$. Thus by \Cref{aim induced lemma 1}, either $V(M)\cap N_G[x]=V(M)\cap N_G[y]=\emptyset$ or $|\{e\in M\mid N_G(x,y)\cap e\neq\emptyset\}|=1$. If $V(M)\cap N_G[x]=V(M)\cap N_G[y]=\emptyset$, then $M'=M\cup\{\{x,y\}\}$ is an induced matching of $G$ so that $\nu_1(\widetilde{G})\le \mathrm{aim}(G,2)-1$, by \Cref{aim property}. Now suppose $|\{e\in M\mid N_G(x,y)\cap e\neq\emptyset\}|=1$. Let $M=\{e_1,\ldots,e_r\}$ and $e_1\in M$ such that $e_1\cap N_G(x,y)\neq\emptyset$. Then for each $f\in M\setminus \{e_1\}$, the two edges $f$, $\{x,y\}$ form a gap. Hence one can take $M_1=\{e_1,\{x,y\}\}$, and $M_i=\{e_i\}$ for $i>1$ so that $\sqcup_{i=1}^r M_i$ forms a $2$-admissible matching of $G$. Consequently, $\nu_1(\widetilde{G})\le \mathrm{aim}(G,2)-1$.
		
		\noindent
		\textbf{Case II:} $y$ is not a free vertex of $G$. In this case, by assumption, there exists some $z\in W_i\setminus\{x,y\}$ such that $z$ is also a free vertex of $G$. Observe that $\{x,z\},\{y,z\}\in E(G)$, and $N_G[z]= N_G[x]\subseteq N_G[y]$. Now, by \Cref{aim induced lemma 1}, we see that one of the following holds.
		\begin{enumerate}[label=(\roman*)]
			\item $V(M)\cap N_G(x)=\emptyset$;
			\item $V(M)\cap N_G(y)=\emptyset$;
			\item $|\{e\in M\mid N_G(x,y)\cap e\neq\emptyset\}|=1$.
		\end{enumerate}
		As before, if (i) or (ii) holds, then $M\subseteq E(G)$, and if (iii) holds, then $|M\setminus E(G)|\le 1$. Now, if (i) holds, then one can observe that $N'=M\cup\{\{x,z\}\}$ is an induced matching of $G$, and if (ii) holds, then $N''=M\cup\{\{y,z\}\}$ is an induced matching of $G$. Consequently, in both cases we have $\nu_1(\widetilde{G})\le \mathrm{aim}(G,2)-1$, by \Cref{aim property}(5). Now, suppose (iii) holds. In this case, let $M=\{e_1,\ldots,e_r\}$ and $e_1\in M$ such that $e_1\cap N_G(x,y)\neq\emptyset$. Since $N_G[x]\subseteq N_G[y]$, we see that $e_1\cap N_G(y)\neq\emptyset$. Moreover, for each $f\in M\setminus\{e_1\}$, we see that $f,\{x,y\}$ form a gap. Now, if $e_1\in E(G)$, then take $M_1=\{e_1,\{x,y\}\}$, $M_i=\{e_i\}$ for $i>1$, and observe that $\sqcup_{i=1}^r M_i$ forms a $2$-admissible matching of $G$. Hence, $\nu_1(\widetilde{G})\le \mathrm{aim}(G,2)-1$. If $e_1\notin E(G)$, then $e_1=\{p,q\}$, where $p\in W_i\setminus\{x,y\}$, $q\in W_l$ for some $l\neq i$, and $\{y,q\}\in E(G)$. Thus one can take $N_1=\{\{x,p\},\{y,q\}\}, N_i=\{e_i\}$ for $i>1$, and observe that $\sqcup_{i=1}^r N_i$ forms a $2$-admissible matching of $G$ so that $\nu_1(\widetilde{G})\le \mathrm{aim}(G,2)-1$. This completes the proof.
	\end{proof}
	
	\begin{lemma}\label{aim induced lemma 4}
		Let $G$ be any graph and $x,y\in V(G)$ such that $y\in N_G(x)$ and $N_G[y]\subseteq N_G[x]$. If $H$ denotes the graph $G\setminus N_G[x]$, then for each $k\ge 1$, we have $\mathrm{aim}(H,k)\le\mathrm{aim}(G,k)-1$.
	\end{lemma}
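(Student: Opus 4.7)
The plan is to take a $k$-admissible matching $M$ of $H$ realizing $\aim(H,k)$, say with admissible partition $M = M_1 \sqcup \cdots \sqcup M_r$, and to produce a $k$-admissible matching of $G$ of cardinality one greater by simply adjoining the new block $M_{r+1} := \{\{x,y\}\}$. If this works, the desired inequality will follow at once from
\[
\aim(G,k) \geq |M| + 1 = \aim(H,k) + 1.
\]

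The verification reduces to checking that $M' := M \sqcup \{\{x,y\}\}$, equipped with the partition $M_1 \sqcup \cdots \sqcup M_r \sqcup M_{r+1}$, satisfies the two conditions of \Cref{def:aim-d-uniform}. The size-count condition is automatic: if $|M| \leq r + k - 1$, then $|M'| = |M| + 1 \leq (r+1) + k - 1$.

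The substantive step is the condition that every edge of $G[V(M')]$ is contained in $G[V(M_i)]$ for some $i \in [r+1]$. The key observation is that the hypothesis $N_G[y] \subseteq N_G[x]$ forces both $x$ and $y$ to have no neighbor in $V(H) = V(G) \setminus N_G[x]$, and hence no neighbor in $V(M)$. Consequently no edge of $G[V(M')]$ can straddle $V(M)$ and $\{x,y\}$; edges internal to $V(M)$ are handled by the admissibility of $M$ in $H$ (noting that the induced subgraph of $G$ on $V(M) \subseteq V(H)$ coincides with that of $H$), and the one remaining edge $\{x,y\}$ lies in $G[V(M_{r+1})]$. Disjointness of $\{x,y\}$ from the edges of $M$, so that $M'$ is indeed a matching, follows from the same observation.

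I do not foresee any real obstacle: the argument is a routine verification of \Cref{def:aim-d-uniform}, and the role of the hypothesis $N_G[y] \subseteq N_G[x]$ is precisely to keep $y$ from contributing stray edges into $V(M)$ that would otherwise force $\{x,y\}$ into an existing block and thereby spoil the partition condition.
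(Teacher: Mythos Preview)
Your proposal is correct and follows exactly the same approach as the paper: take a $k$-admissible matching $M=\sqcup_{i=1}^r M_i$ of $H$ with $|M|=\aim(H,k)$, adjoin the singleton block $M_{r+1}=\{\{x,y\}\}$, and use $N_G[y]\subseteq N_G[x]$ to guarantee that no edge of $G$ joins $\{x,y\}$ to $V(M)$. If anything, you supply more detail than the paper (explicitly checking the size condition, the matching condition, and that $G[V(M)]=H[V(M)]$), but the idea is identical.
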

	\begin{proof}
		Let $M=\sqcup_{i=1}^r M_i$ be a $k$-admissible matching of $H$ such that $|M|=\mathrm{aim}(H,k)$. Then for each $e\in M$, $e\cap N_G(x,y)=\emptyset$ since $N_G[y]\subseteq N_G[x]$. Hence $M'=\sqcup_{i=1}^{r+1} M_i'$ is a $k$-admissible matching of $G$, where $M_i'=M_i$ for each $i\in[r]$, and $M_{r+1}'=\{\{x,y\}\}$. Thus $\mathrm{aim}(H,k)\le\mathrm{aim}(G,k)-1$. 
	\end{proof}

    We are now in a position to prove the main result of this section.

\begin{theorem}\label{CM chordal regularity}
Let $G$ be a Cohen-Macaulay chordal graph with the matching number at least $2$. Then 
\[
\mathrm{reg}(I(G)^{[2]})=\mathrm{aim}(G,2)+2.
\]
\end{theorem}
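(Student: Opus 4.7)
The lower bound $\reg(I(G)^{[2]}) \geq \aim(G, 2) + 2$ is immediate from Theorem~\ref{thm: ordinary lower bound d uniform} applied to simple graphs. The remainder of the proof would establish the matching upper bound, which I would argue by induction on $|V(G)|$, handling the small cases and those with $\nu(G) = 1$ directly. We may assume without loss of generality that $G$ has no isolated vertices, since they affect neither $I(G)^{[2]}$ nor $\aim(G, 2)$.

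The central step is to choose an edge $\{x, y\} \in E(G)$ carefully and apply the regularity lemma (Lemma~\ref{regularity lemma}(iii)):
\[
\reg(I(G)^{[2]}) \leq \max\left\{\reg(I(G)^{[2]} : xy) + 2, \; \reg(I(G)^{[2]} + \langle xy \rangle)\right\}.
\]
For the colon term, Proposition~\ref{weakly chordal} asserts that the graph $\widetilde{G}$ determined by $I(G)^{[2]} : xy = I(\widetilde{G})$ is weakly chordal, and by Woodroofe's theorem the equality $\reg(I(\widetilde{G})) = \nu_1(\widetilde{G}) + 1$ holds for any weakly chordal graph. I would choose $\{x, y\}$ so that either Lemma~\ref{aim induced lemma 2} or Lemma~\ref{aim induced lemma 3} applies, giving $\nu_1(\widetilde{G}) \leq \aim(G, 2) - 1$, hence $\reg(I(G)^{[2]} : xy) + 2 \leq \aim(G, 2) + 2$. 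The existence of such an edge is checked by case analysis on the Cohen-Macaulay partition $V(G) = W_1 \sqcup \cdots \sqcup W_m$: if some $W_i$ contains at least two free vertices, pick $x$ free in $W_i$ and any $y \in W_i \setminus \{x\}$ and invoke Lemma~\ref{aim induced lemma 3}; otherwise every $W_i$ contains a unique free vertex, and since $\nu(G) \geq 2$ precludes $G$ from being a single clique, some $W_i$ must contain a non-free vertex $b$ with an external neighbor $c \in N_G(b) \setminus W_i$, and Lemma~\ref{aim induced lemma 2} then applies to $\{b, c\}$.

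For the sum term, I would apply the regularity lemma once more with a carefully chosen single variable (typically a free vertex $x$), giving
\[
\reg(I(G)^{[2]} + \langle xy \rangle) \leq \max\left\{\reg\bigl((I(G)^{[2]} : x) + \langle y \rangle\bigr) + 1, \; \reg(I(G \setminus x)^{[2]})\right\},
\]
since $I(G)^{[2]} + \langle xy, x \rangle = I(G \setminus x)^{[2]} + \langle x \rangle$, whose regularity equals $\reg(I(G \setminus x)^{[2]})$. The second term is handled by induction (with suitable care for whether $G \setminus x$ remains Cohen-Macaulay chordal or by passing to $G \setminus N_G[x]$), while the first term is controlled by iterating the analysis above. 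Here Lemma~\ref{aim induced lemma 4} plays the key role, supplying $\aim(G \setminus N_G[x], 2) \leq \aim(G, 2) - 1$ whenever a vertex $y \in N_G(x)$ is dominated by $x$.

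The main obstacle, and the reason the proof is delicate, is precisely the handling of the sum term: the ideal $I(G)^{[2]} + \langle xy \rangle$ is not itself the square-free power of a simpler graph, and the subgraph $G \setminus x$ may fail to be Cohen-Macaulay chordal. I expect the resolution to mirror the strategy in the proof of Theorem~\ref{block ordinary upper bound} for block graphs, where one proves a strengthened inductive statement permitting the inclusion of auxiliary ideals (analogous to the ideals $I_{S, B}$ of Notation~\ref{extra ideal}) and carefully orders the successive reductions so as to stay within a class of ideals whose regularities are controllable through the weakly chordal property of $\widetilde{G}$ together with the admissible matching estimates of Lemmas~\ref{aim induced lemma 2},~\ref{aim induced lemma 3}, and~\ref{aim induced lemma 4}.
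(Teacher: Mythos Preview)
Your overall architecture is right and you have correctly isolated every ingredient the paper uses: the lower bound from \Cref{thm: ordinary lower bound d uniform}, induction on $|V(G)|$, \Cref{weakly chordal} together with the Woodroofe-type formula $\reg(I(\widetilde G))=\nu_1(\widetilde G)+1$, and \Cref{aim induced lemma 2,aim induced lemma 3,aim induced lemma 4} to compare induced matching numbers with $\aim(G,2)$. Where your sketch diverges from the paper is in the mechanics of the sum term, and this is also where it has a genuine gap.

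You propose to handle $I(G)^{[2]}+\langle xy\rangle$ by next taking the colon with the \emph{variable} $x$. The resulting ideal $(I(G)^{[2]}:x)+\langle y\rangle$ is not of a form any of the lemmas control: $I(G)^{[2]}:x$ is generated by all of $I(G\setminus x)^{[2]}$ together with the degree-$3$ monomials $z\cdot e$ for $z\in N_G(x)$ and $e\in E(G\setminus\{x,z\})$, and there is no evident bound on its regularity in terms of $\aim(G,2)$. The paper avoids this by never taking the colon with $x_1$ until \emph{all} edges incident to $x_1$ have been added. Concretely, one fixes $W_1=\{x_1,\dots,x_r\}$ with $x_r$ free and $N_G(x_1)\setminus W_1=\{y_1,\dots,y_s\}$ (possibly empty), and repeatedly applies \Cref{regularity lemma} with the \emph{edges} $x_1y_1,\dots,x_1y_s,x_1x_2,\dots,x_1x_r$ in that order. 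At the $l$-th external step the colon ideal is $I(\widetilde{G_l})$ with $G_l=G\setminus\{y_1,\dots,y_l\}$ still Cohen--Macaulay chordal, so \Cref{aim induced lemma 2} applies; after the external edges are gone, $x_1$ has become free with the second free vertex $x_r$ still present, so \Cref{aim induced lemma 3} handles the internal steps. Only at the very end does one take colon with $x_1$, obtaining $I(G\setminus N_G[x_1])^{[2]}$, which is bounded via \Cref{aim induced lemma 4} and induction; adding $x_1$ yields $I(G\setminus x_1)^{[2]}$ with $G\setminus x_1$ again Cohen--Macaulay chordal.

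In particular, no strengthened inductive hypothesis with auxiliary ideals in the style of \Cref{extra ideal} is needed here; the ordering of the edge additions keeps every intermediate colon inside the class where \Cref{weakly chordal} and \Cref{aim induced lemma 2,aim induced lemma 3} apply directly. Your instinct that ``iterating the analysis'' is required is correct, but the iteration is over successive \emph{edges} at $x_1$, not over a single edge followed by a variable.
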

\begin{proof}
By \Cref{thm: ordinary lower bound d uniform}, we have $\mathrm{reg}(I(G)^{[2]})\ge\mathrm{aim}(G,2)+2$. To show $\mathrm{reg}(I(G)^{[2]})\le\mathrm{aim}(G,2)+2$, we use induction on $|V(G)|$. Since $G$ has matching number at least $2$, we have $|V(G)|\ge 4$. Now, if $|V(G)|=4$, then $I(G)^{[2]}$ is a principal ideal, and thus it is easy to see that $\mathrm{reg}(I(G)^{[2]})=\mathrm{aim}(G,2)+2$. Now suppose $|V(G)|\ge 5$. Let $V(G)=W_1\sqcup W_2\sqcup \dots\sqcup W_m$ such that for each $i\in [m]$, $G[W_i]\cong K_{t_i}$, where $t_i\ge 2$. Then we have the following two cases:

\noindent
\textbf{Case I:} $G$ is not a disjoint union of complete graphs. Then there exists some $i\in [m]$ and $x\in W_i$ such that $N_G(x)\setminus W_i\neq\emptyset$. Without loss of generality, let $i=1$. Moreover, let $W_1=\{x_1,\ldots,x_r\}$ for some $r\ge 2$ such that $x_r$ is a free vertex of $G$, and $N_G(x_1)\setminus W_1\neq\emptyset$. Suppose $N_G(x_1)\setminus W_1=\{y_1,\ldots,y_s\}$.

\noindent
\textbf{Claim 1:} For each $l\in[s-1]\cup\{0\}$, 
\[
\mathrm{reg}((I(G)^{[2]}+\langle x_1y_1,\ldots,x_1y_l\rangle):x_1y_{l+1})\le\mathrm{aim}(G,2),
\]
where $I(G)^{[2]}+\langle x_1y_1,\ldots,x_1y_l\rangle=I(G)^{[2]}$ in case $l=0$.

\noindent
\textit{Proof of Claim 1}: Let $G_l=G\setminus\{y_1,\ldots,y_{l}\}$. Then, it is easy to see that $G_l$ is a Cohen-Macaulay chordal graph. Now,
\begin{align*}
	\reg((I(G)^{[2]}+\l x_1y_1,\ldots,x_1y_l\r):x_1y_{l+1})=\reg(I(\widetilde{G_l}))
	&= \nu_1(\widetilde{G_l})+1\\
    &\le \aim(G_l,2)\\
	&\le \aim(G,2),
\end{align*}
where $\widetilde{G_l}$ is a weakly chordal graph (by \Cref{weakly chordal}), the first equality follows from \cite[Lemma 2.9]{DRS20242}, the second equality follows from \Cref{aim property}, the first inequality follows from \Cref{aim induced lemma 2}, and the second inequality follows from \Cref{admissible result2}. This completes the proof of Claim~1.

\noindent
\textbf{Claim 2:} For each $p\in[r-1]$, 
\[
\mathrm{reg}((I(G)^{[2]}+\langle x_1y_1,\ldots,x_1y_s,x_1x_2,\ldots,x_1x_p\rangle):x_1x_{p+1})\le\mathrm{aim}(G,2),
\]
where $I(G)^{[2]}+\langle x_1y_1,\ldots,x_1y_s,x_1x_2,\ldots,x_1x_p\rangle=I(G)^{[2]}+\langle x_1y_1,\ldots,x_1y_s\rangle$ in case $p=1$.

\noindent
\textit{Proof of Claim 2}: Let $H_p=G\setminus\{y_1,\ldots,y_{s},x_2,\ldots,x_p\}$. Then again, it is easy to see that $H_p$ is a Cohen-Macaulay chordal graph. Moreover, both $x_{1}$ and $x_{r}$ are free vertices of $H_p$. Now,
\begin{align*}
	\mathrm{reg}((I(G)^{[2]}+\langle x_1y_1,\ldots,x_1y_s,x_1x_2,\ldots,x_1x_p\rangle):x_1x_{p+1})=\reg(I(\widetilde{H_p}))
	&= \nu_1(\widetilde{H_p})+1\\
    &\le \aim(H_p,2)\\
	&\le \aim(G,2),
\end{align*}
where $\widetilde{H_p}$ is a weakly chordal graph (by \Cref{weakly chordal}), the equality follows \cite[Lemma 2.9]{DRS20242}, the second equality follows from \Cref{aim property}, the first inequality follows from \Cref{aim induced lemma 3},  and the second inequality follows from \Cref{admissible result2}. This completes the proof of Claim 2.

Let $\Gamma=G\setminus N_G[x_1]$. Then $\Gamma$ is a Cohen-Macaulay chordal graph. Moreover,
\begin{equation}\label{eq31}
	\begin{split}
		\reg((I(G)^{[2]}+\langle x_1y_1,\ldots,x_1y_s,x_1x_2,\ldots,x_1x_r\rangle):x_1)&=\reg(I(\Gamma)^{[2]}+\l y_1,\ldots,y_s,x_2,\ldots,x_r\r)\\
		&\le \mathrm{aim}(\Gamma,2)+2\\
		&\le \aim(G,2)+1,
	\end{split}
\end{equation}
where the equality again follows from \cite[Lemma 2.9]{DRS20242}, the first inequality follows from the induction hypothesis, and the second inequality follows from \Cref{aim induced lemma 4}.

Next, we have
\begin{equation}\label{eq32}
	\begin{split}
		\reg(I(G)^{[2]}+\l x_1y_1,\ldots,x_1y_s,x_1x_2,\ldots,x_1x_r,x_1\r)&=\reg(I(G\setminus x_1)^{[2]})\\
		&\le\aim(G\setminus x_1,2)+2\\
		&\le \aim(G,2)+2,
	\end{split}
\end{equation}
where the first and second inequalities follow from the induction hypothesis and \Cref{admissible result2},~respectively.

Now, using \Cref{regularity lemma} along with inequalities~(\ref{eq31}) and (\ref{eq32}), we get 
\[\reg(I(G)^{[2]}+\l x_1y_1,\ldots,x_1y_s,x_1x_2,\ldots,x_1x_r\r)\le\aim(G,2)+2.\] 
Applying \Cref{regularity lemma} with the above inequality and the inequality proved in Claim 2, we get \[\reg(I(G)^{[2]}+\l  x_1y_1,\ldots,x_1y_s,x_1x_2,\ldots,x_1x_{r-1}\r)\le\aim(G,2)+2.\] 
By a repeated use of Claim 2 and \Cref{regularity lemma}, we get $\reg(I(G)^{[2]}+\l x_1y_1,\ldots,x_1y_s\r)\le\aim(G,2)+2$. Now, from Claim 1 we have $\reg((I(G)^{[2]}+\l x_1y_1,\ldots,x_1y_{s-1}\r):x_1y_{s})\le\aim(G,2)$. Thus using \Cref{regularity lemma} again we obtain $\reg((I(G)^{[2]}+\l x_1y_1,\ldots,x_1y_{s-1}\r)\le\aim(G,2)+2$. By a repeated use of Claim 1 and \Cref{regularity lemma} this time, we finally get $\reg(I(G)^{[2]}\le\aim(G,2)+2$, as~desired.

\noindent
\textbf{Case II:} $G$ is a disjoint union of complete graphs. In this case, let $W_1=\{x_1,\ldots,x_r\}$ for some $r\ge 2$. Then, proceeding as in Case I above, we obtain the following:
\begin{enumerate}[label=(\roman*)]
	\item For each $p\in[r-1]$, $\mathrm{reg}((I(G)^{[2]}+\langle x_1x_2,\ldots,x_1x_p\rangle):x_1x_{p+1})\le\mathrm{aim}(G,2)$, where $I(G)^{[2]}+\langle x_1x_2,\ldots,x_1x_p\rangle=I(G)^{[2]}$ in case $p=1$;
	
	\item $ \mathrm{reg}((I(G)^{[2]}+\langle x_1x_2,\ldots,x_1x_r\rangle):x_1)\le\mathrm{aim}(G,2)+1$;
	
	\item $ \mathrm{reg}((I(G)^{[2]}+\langle x_1x_2,\ldots,x_1x_r,x_1\rangle)\le\mathrm{aim}(G,2)+2$.
\end{enumerate}
Now, using \Cref{regularity lemma} along with (ii) and (iii) above, we have $\mathrm{reg}(I(G)^{[2]}+\langle x_1x_2,\ldots,x_1x_r\rangle)\le \mathrm{aim}(G,2)+2$. Thus, as before, by a repeated use of \Cref{regularity lemma} and (i) above we obtain $\reg(I(G)^{[2]})\le\aim(G,2)+2$. This completes the proof of the theorem.
\end{proof}

\section{Concluding remarks and some open problems}\label{section 6}

Let $I$ be a (not necessarily squarefree) monomial ideal and $k$ a positive integer. Erey and Ficarra \cite{ErFi1} recently defined the \emph{$k$-mathching power} of $I$ to be the ideals generated by $m_1m_2\cdots m_k$ where $m_1,m_2,\dots, m_k\in \G(I)$ have pairwise-disjoint supports. They denoted the $k$-th matching power of $I$ by $I^{[k]}$ as it is clear from the definition that if $I$ is square-free, then the $k$-th matching power of $I$ is exactly its $k$-th square-free power. Recall from \cite[cf. Exercise 6.4.33]{RHV} that $\reg I^{\P}=\reg I$, where $I^{\P}$, a square-free monomial ideal, denotes the \emph{polarization} of $I$. By \cite[Proposition~1.4]{ErFi1}, for any monomial ideal $I$ and integer $k$, we have $\left(I^{[k]}\right)^{\P}=\left(I^\P \right)^{[k]}$. Thus, one can use \Cref{thm: ordinary lower bound} to give a lower bound for the regularity of matching powers of monomial ideals. We leave the details to interested readers.

For several classes of graphs, the regularity of their edge ideals can be expressed in terms of the induced matching number of the corresponding graphs \cite{BKH1}. For instance, $\reg(I(G))=\nu_1(G)+1$ if $G$ belongs to one of the following classes:
\begin{enumerate}[label=(\roman*)]
    \item (weakly) chordal;
    \item very well-covered;
    \item sequentially Cohen-Macaulay bipartite;
    \item $C_5$-free vertex decomposable;
    \item certain unicyclic graphs.    
\end{enumerate}

\noindent For any graph $G$, it is well-known that $\reg(I(G)^k)\geq \nu_1(G)+2k-1$. Moreover, it is expected that this lower bound will be attained for all the above-mentioned classes of graphs.
\par 

In the context of square-free powers, it was proved in \Cref{thm: ordinary lower bound d uniform} that $\reg(I(G)^{[k]})\geq \aim(G,k)+k$ for all $1\leq k\leq \nu(G)$. Note that $\aim(G,1)=\nu_{1}(G)$. Thus, $\aim(G,1)+1$ is indeed $\reg(I(G)^{[1]})$ when $G$ is one of the above graphs. Hence, analogous to ordinary powers, we ask the following.

\begin{question}\label{ques}
    Let $G$ be a simple graph such that $\reg(I(G))=\nu_1(G)+1$. Then is it true that $\reg(I(G)^{[k]})=\aim(G,k)+k$ for any $1\leq k\leq \nu(G)$?
\end{question}

We have an affirmative answer to the above question for block graphs, as seen in \Cref{block ordinary upper bound}. Also, for a Cohen-Macaulay chordal graph $G$, we have shown in \Cref{CM chordal regularity} that $\reg(I(G)^{[2]})=\aim(G,2)+2$. Moreover, we have checked all chordal graphs up to $8$ vertices and observed that \Cref{ques} has a positive answer. Based on all these, we conjecture the following.

 \begin{conjecture}\label{conj: chordal reg}
      Let $G$ be a chordal graph. Then, for all $1\leq k\leq \nu(G)$, we have
        \[ \reg(I(G)^{[k]})=\aim(G,k)+k. \]
 \end{conjecture}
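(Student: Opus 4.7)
The lower bound $\reg(I(G)^{[k]}) \geq \aim(G,k)+k$ is already supplied by \Cref{thm: ordinary lower bound d uniform}, so the plan is to establish the reverse inequality $\reg(I(G)^{[k]}) \leq \aim(G,k)+k$ for every chordal graph $G$ and every $1 \leq k \leq \nu(G)$. The strategy is to mimic and generalize the approaches used in \Cref{block ordinary upper bound} and \Cref{CM chordal regularity}, proceeding by double induction on $|V(G)|$ and $k$. The base case $k=1$ is classical Fröberg: $\reg(I(G)) = \nu_1(G)+1 = \aim(G,1)+1$ for chordal $G$. For the inductive step, I would select a \emph{simplicial vertex} $v$ of $G$ (which exists by chordality) lying in a maximal clique $B = \{v, v_1, \ldots, v_d\}$ where $v$ is free, and then iteratively apply \Cref{regularity lemma} to peel off the edges $\{v, v_i\}$ and then the vertex $v$ itself, obtaining bounds on the intermediate ideals of the form $(I(G)^{[k]} + \langle v v_1, \ldots, v v_{j-1}\rangle : v v_j)$ and $I(G)^{[k]} + \langle v v_1, \ldots, v v_d, v\rangle$.

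To make such an induction close, one cannot merely track $\reg(I(G)^{[k]})$: following the model of \Cref{block ordinary upper bound}(ii), the induction hypothesis must be strengthened to bound $\reg(I(G)^{[k]} + J)$ for an appropriate family of auxiliary monomial ideals $J$ encoding the partial edge additions. For chordal graphs the natural candidate is an analog of \Cref{extra ideal}, where $B$ is replaced by a \emph{simplicial clique} and $\Lambda$ consists of edges (or, more generally, sub-cliques) attached to $B$ along prescribed vertices. A corresponding combinatorial lemma, analogous to \Cref{admissible result} and \Cref{admissible result1}, must be proven to show that removing $B$, or removing a sub-collection of its vertices together with attached cliques, decreases $\aim(G,k)$ by at least $1$. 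On the analytic side, one needs sharp colon formulas: by \cite[Lemma 2.9]{DRS20242} we have $(I(G)^{[k]} : xy) = I(G \setminus \{x,y\})^{[k-1]}$ when $\{x,y\}$ is an edge with $x,y$ free neighbors, and one must extend this bookkeeping to accommodate the auxiliary ideal $J$.

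The principal obstacle is that, in contrast to block graphs, two maximal cliques of a chordal graph may share arbitrarily many vertices, so the block-path combinatorics of \Cref{block path existence}, \Cref{distant leaf block existence}, and \Cref{special block existence lemma} break down. A replacement tool should be the clique-tree representation of chordal graphs: there exists a tree $T$ whose vertices are the maximal cliques of $G$ such that, for every vertex $w \in V(G)$, the cliques containing $w$ induce a subtree of $T$. Using a leaf of $T$ corresponding to a ``distant simplicial clique'' $B$, and controlling how its free vertices interact with the unique neighboring clique via their shared separator, one can hope to reduce to a smaller chordal graph. This reduction step, and the accompanying proof that the strengthened ideal $I(G)^{[k]} + J$ still satisfies the recursive regularity bound, will be the technically hardest part; the $k=2$ proof of \Cref{CM chordal regularity} succeeded partly because $(I(G)^{[2]}:xy)$ is already an edge ideal of a weakly chordal graph by \Cref{weakly chordal}, and no analog of that phenomenon is available for $k \geq 3$.

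As an intermediate target, I would first attempt the conjecture for \emph{generalized block graphs} (chordal graphs whose clique tree has only ``thin'' separators, i.e., the separators between adjacent maximal cliques are either single vertices or satisfy some freeness condition), where the special-clique framework of \Cref{special block} admits a natural extension. Settling that case should expose the correct structural invariant needed to package the auxiliary ideal $J$ in the fully general setting. Given the computational evidence in \Cref{section 6} (verified up to $8$ vertices), the admissible matching number is almost certainly the right invariant, and the proof should boil down to finding the correct inductive scaffolding rather than a new algebraic miracle.
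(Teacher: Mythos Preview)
The statement you are addressing is \emph{Conjecture}~\ref{conj: chordal reg}, and the paper does not prove it; it is explicitly posed as an open problem supported only by the partial results of \Cref{block ordinary upper bound} and \Cref{CM chordal regularity} together with computer verification on graphs with at most eight vertices. There is therefore no proof in the paper to compare your proposal against.

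What you have written is not a proof but a research outline, and you say as much: phrases like ``one can hope to reduce'', ``will be the technically hardest part'', and ``should boil down to'' signal that the core step is missing. You have correctly isolated the two genuine obstructions. First, the block-graph argument in \Cref{block ordinary upper bound} depends on the auxiliary ideals $I_{S,B}$ of \Cref{extra ideal}, which are tractable precisely because every $D\in\Lambda$ is a $K_2$ meeting $B$ in a single vertex; for a general chordal graph, a leaf of the clique tree meets its neighbour along a separator of arbitrary size, and no analogue of \Cref{special block existence lemma} is known that would produce a clique whose attached structure is controlled enough to define a workable family $J$. Second, the $k=2$ argument of \Cref{CM chordal regularity} hinges on \Cref{weakly chordal}, which guarantees that $(I(G)^{[2]}:xy)$ is the edge ideal of a weakly chordal graph, so its regularity is governed by an induced matching number; for $k\geq 3$ the colon $(I(G)^{[k]}:e)$ is a $(k-1)$-st square-free power of a smaller graph, and bounding its regularity already presupposes the conjecture for that smaller graph at level $k-1$, which your induction would supply, but the interaction with the auxiliary ideal $J$ under the colon is where the argument is genuinely incomplete. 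Until you can state precisely what class of ideals $I(G)^{[k]}+J$ is closed under the operations $(\,\cdot:vv_j)$ and $(\,\cdot\,)+\langle v\rangle$ for a simplicial vertex $v$, and prove the corresponding admissible-matching inequality analogous to \Cref{admissible result} and \Cref{admissible result1} for that class, the induction does not close. Your proposal is a reasonable plan of attack, but it remains a plan; the conjecture is open.
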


\section*{Acknowledgements}

Chau, Das, and Roy are supported by Postdoctoral Fellowships at the Chennai Mathematical Institute. Saha would like to thank the National Board for Higher Mathematics (India) for the financial support through the NBHM Postdoctoral Fellowship. All the authors are partially supported by a grant from the Infosys Foundation.  



\bibliographystyle{abbrv}
\bibliography{ref}
\end{document}